\newcommand{\N}{\mathbb{N}}
\newcommand{\R}{\mathbb{R}}
\newcommand{\C}{\mathbb{C}}
\newcommand{\T}{\mathbb{T}}
\newcommand{\interface}{\mathcal{T}}
\newcommand{\eps}{\varepsilon}
\newcommand{\np}{\mathrm{np}}
\newcommand{\loc}{\mathrm{loc}}
\renewcommand{\Re}{\operatorname{Re}}
\renewcommand{\Im}{\operatorname{Im}}
\DeclareMathOperator{\Range}{\mathrm{ran}}
\DeclareMathOperator{\Ind}{\mathrm{Ind}}
\DeclareMathOperator{\Span}{\mathrm{span}}
\DeclareMathOperator{\diag}{\mathrm{diag}}
\newtheorem{thm}{Theorem}[section]
\newtheorem{lem}[thm]{Lemma}
\newtheorem{cor}[thm]{Corollary}
\newtheorem{prop}[thm]{Proposition}
\theoremstyle{remark}
\newtheorem{rk}[thm]{Remark}
\theoremstyle{definition}
\numberwithin{equation}{section}
\title[Waves with two layers of constant vorticity]{Large-amplitude periodic solutions to the steady Euler equations with piecewise constant vorticity}
\author{Alex Doak} 
\author{Karsten Matthies}
\author{Jonathan Sewell}
\author{Miles H. Wheeler}
\begin{document}
\begin{abstract}
    We consider steady solutions to the incompressible Euler equations in a two-dimensional channel with rigid walls. 
    The flow consists of two periodic layers of constant vorticity separated by an unknown interface. 
    Using global bifurcation theory, we rigorously construct curves of solutions that terminate either with stagnation on the interface or when the conformal equivalence between one of the layers and a strip breaks down in a $C^1$ sense.
    We give numerical evidence that, depending on parameters, these occur either as a corner forming on the interface or as one of the layers developing regions of arbitrarily thin width. 
    Our proof relies on a novel formulation of the problem as an elliptic system for the velocity components in each layer, conformal mappings for each layer, and a horizontal distortion which makes these mappings agree on the interface. 
    This appears to be the first local formulation for a multi-layer problem which allows for both overhanging wave profiles and stagnation points.
\end{abstract}

\maketitle
\tableofcontents

\section{Introduction}

The steady two-dimensional incompressible Euler equations are a classical problem with a long history.  Recent developments include `rigidity' results for flows without stagnation points \cite{HamelNadirashvili:NoStagImpliesShear} or with analytic regularity \cite{elgindi:vortFnExistsIfAnalytic}, and existence results for $C^k$ solutions with compactly supported velocity \cite{enciso2024schiffertypeproblemannuliapplications, enciso2024smoothnonradialstationaryeuler}.
There is a substantial literature on weak solutions where fluid properties change discontinuously across  unknown interfaces, such as vortex patches with piecewise-constant vorticity and internal gravity waves with piecewise-constant density. On one hand, this allows for the use of comparatively simple models (e.g.~irrotational flow) in the layers separated by the interfaces. On the other hand, the fact that these interfaces must be determined as part of the solution introduces many new complications. 
In particular, the interface can be singular, as for the celebrated irrotational water waves `of greatest height'. These waves feature corners at their crests, with interior angle $2\pi/3$, which are also \emph{stagnation points} where the fluid velocity vanishes \cite{kn:existence,amick:stokesWavesRigorous,plotnikov:conjecture}. The existence of such waves had originally been conjectured by Stokes~\cite{stokes1880considerations}; an analogous conjecture for rotating vortex patches in the plane -- this time with an interior angle of $\pi/2$ -- remains open despite some partial progress \cite{overman:limiting,HassainiaMasmoudiWheeler:catseyevortpatch,cg:nonconvex,wang2024:boundaryregularityuniformlyrotating}.
Numerics for internal gravity waves show still other possibilities, for instance \emph{overhanging} interfaces that are not the graph of a function \cite{meiron1983overhanging} or interfaces which touch a rigid wall \cite{dvb:fronts}.

One of the main challenges when studying such free boundary problems is to reformulate the problem in a fixed domain, either by introducing appropriate coordinates to flatten the fluid layers or, in some circumstances, by converting to a non-local equation posed on the interface itself. Conformal mappings are the canonical choice for problems involving overhanging interfaces, and have many desirable properties, especially for one-layer fluids. However, the case of two or more fluid layers presents a challenge: given two fluid layers sharing a common boundary, the conformal maps that take each layer to a known domain will in general disagree on this common boundary, and so the interfacial boundary conditions become non-local in the new variables. In this paper we resolve this issue by carefully distorting of one of the conformal mappings in such a way that the transformed system is not only local, but an elliptic system in the Agmon--Douglis--Nirenberg sense.
This novel formulation has many advantages, not least of which are access to Schauder-type estimates for linearized operators and the efficient calculation of Fredholm indices via homotopy arguments.

Using these new coordinates, we study waves propagating in a fluid with two layers of constant vorticity, bounded above and below by rigid walls. The boundary conditions along the interface are the same as in the vortex patch problem mentioned above, while the geometry of the two layers is closer to the internal gravity wave problem. The Fredholm properties afforded by the elliptic system setting allow us to apply analytic global bifurcation theory \cite{DancerE.N.1973BTfA,Buffoni--Toland:Book} to construct continuous curves of solutions which are in some sense maximal. Through additional elliptic estimates and maximum principle arguments we then give a more precise description of the limiting behaviour at the ends of these curves: either the derivative of one of the conformal mappings (or its inverse) becomes arbitrary large at the interface or the minimum speed of the fluid on the interface becomes arbitrarily small.

We complement these rigorous results with numerical calculations, this time using a non-local formulation, which give a much more detailed picture of the solutions. Near the ends of our numerical bifurcation branches, depending on the values of the parameters, interfaces sometimes develop corners and sometimes touch one of the rigid walls. 

\subsection{Statement of the main result}\label{sec:intro:setup}
We consider a steady incompressible inviscid fluid with constant density occupying an infinite horizontal channel $\R \times [0,1]$. Here we have used the height of the channel as our length scale when non-dimensionalising. We denote the horizontal and vertical coordinates by $X$ and $Y$ respectively, and the horizontal and vertical components of the fluid velocity field by $U=U(X,Y)$ and $V=V(X,Y)$ respectively. 
The fluid is divided into a lower layer $\Omega_0$ with constant vorticity $\omega_0$ and an upper layer $\Omega_1$ with constant vorticity $\omega_1$, so that
\begin{subequations}\label{eqn:largeampstream}
\begin{alignat}{2}
    \label{eqn:largeampstream:divfree}
    \partial_X U + \partial_Y V &= 0 &\qquad& \text{ in } \Omega_0 \cup \Omega_1 \\
    \label{eqn:largeampstream:lap}
    \partial_Y U - \partial_X V &= \omega_i &\qquad&\text{ in } \Omega_i \text{ for } i=0,1.
\end{alignat}
Assuming that the vorticities $\omega_1 \ne \omega_2$ are distinct, we now non-dimensionalise time so that $\omega_0-\omega_1=1$. 

We call the common boundary $\partial\Omega_0 \cap\partial\Omega_1$ the \emph{interface}, and denote it by $\Gamma$. Imposing the boundary conditions
\begin{align}
    \label{eqn:largeampstream:cont}
    (U,V) &\text{ continuous across } \Gamma \\
    \label{eqn:largeampstream:kinint}
    (U,V) &\text{ parallel to }\Gamma
\end{align}
guarantees that $(U,V)$ is a weak solution to the steady Euler equations in $\R \times (0,1)$ for some appropriate pressure $P$. 
The boundary condition \eqref{eqn:largeampstream:kinint} is a kinematic condition guaranteeing that fluid particles on the interface remain there for all time. Here we assume that $\Omega_0 \cup \Omega_1 \cup \Gamma =\R \times (0,1)$ and that the layers are unbounded with $\{Y=0\} \subseteq \partial \Omega_0$ and $\{Y=1\} \subseteq \partial\Omega_1$. On these upper and lower boundaries we also impose kinematic boundary conditions
\begin{alignat}{2}
    \label{eqn:largeampstream:kintop}
    V &= 0 &\qquad& \text{ on } Y=1 \\
    \label{eqn:largeampstream:kinbot}
    V &= 0 &\qquad& \text{ on } Y=0.
\end{alignat}

We emphasise that the geometry of the two layers is a priori unknown, and determined as part of the solution. 
Moreover, overturning waves are permitted, that is, $\Gamma$ need not be the graph of a function of $X$; see Figure~\ref{fig:setup}. 
We further restrict to solutions which are periodic in $X$ with prescribed wavenumber $k \in \R$,
\begin{equation}\label{eqn:largeampstream:periodic}
    (U,V) \text{ are $2\pi/k$-periodic in $X$, }
\end{equation}
and where $\Omega_0$ has prescribed average depth $H \in (0,1)$ in the sense that
\begin{align}\label{eqn:largeampstream:avg}
    \operatorname{Area}\bigl({\Omega_0 \cap [0,2\pi/k]\times[0,1]}\big)&=\frac{2\pi H}{k}.
\end{align}
See Figure~\ref{fig:setup} for a sketch of the problem.
\end{subequations}
\begin{figure}
     \centering
     \begin{subfigure}[b]{0.85\textwidth}
         \centering
         \includegraphics[width=\textwidth]{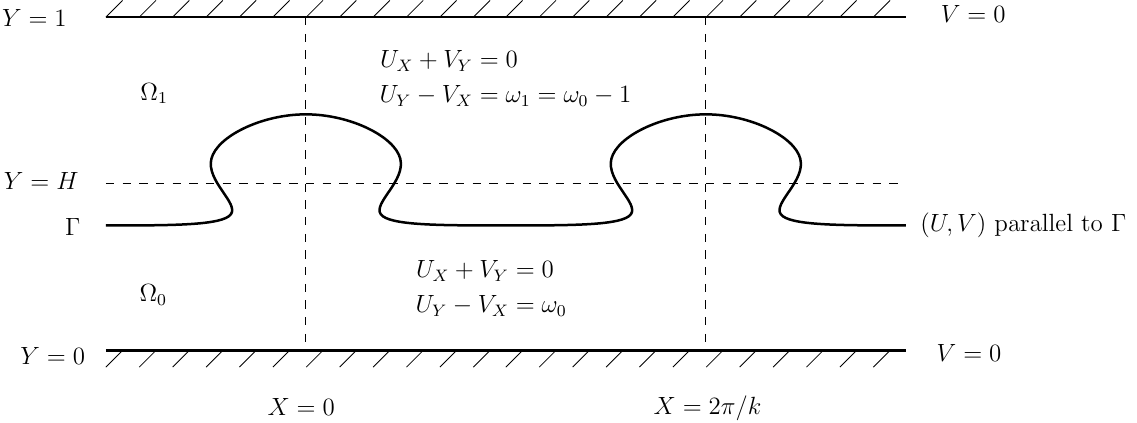}
         \label{fig:setup:setup}
     \end{subfigure}
        \caption{ Sketch of the problem \eqref{eqn:largeampstream}. The interface $\Gamma$ overturns, but is still monotone in the sense that the $Y$-coordinate strictly decreases as one moves form the crest at $X=0$ to the trough at $X=\pi/k$. }
        \label{fig:setup}
\end{figure}

The main theorem of this paper is the following.
\begin{thm}\label{thm:largeampmain}
   Fix real constants $H$, $k$, $\omega_0$, and $\omega_1$ such that $H \in (0,1)$ and $\omega_0-\omega_1=1$. 
    There exists a continuous curve of solutions to \eqref{eqn:largeampstream} with the following properties.
    \begin{enumerate}[label=\textup{(\roman*)}]
    \item 
    The geometry of each solution along the curve is described by constants $h_0,h_1>0$ and conformal maps
    \begin{align}\label{eqn:conformal}
        \begin{aligned}
        (\hat X_0, \hat Y_0) & \colon \R \times (0,h_0) \longrightarrow \Omega_0\\
        (\hat X_1, \hat Y_1) & \colon \R \times (h_0,h_0+h_1) \longrightarrow \Omega_1
        \end{aligned}
    \end{align}
    such that $\hat X_i(x,y)-2\pi x/k$ and $\hat Y_i(x,y)$ are periodic in $x$ with period $2\pi$. 
    \item
    The curve begins at a solution with no dependence on $X$, and as the parameter $\tau \to \infty$ along the curve, the positive quantity
    \begin{equation}\label{eqn:finalConclusion}
    \min_{i=0,1} \left\{  \frac{1}{\sup \limits_{y=h_0}|\nabla \hat Y_i|}, \ \inf_{y=h_0}|\nabla \hat Y_i|, \ \inf_{\Gamma} \big( U^2+V^2 \big) \right\} \longrightarrow 0.
    \end{equation}
    \item 
    For all solutions on the curve, the sets $\Omega_0,\Omega_1,\Gamma$ are symmetric under reflections in $X$ while $U$ is even in $X$ and $V$ is odd in $X$.
    \item 
    With the exception of the initial solution with no $X$ dependence, all solutions on the curve are strictly monotone in the sense that $V < 0$ in $(0,\pi/k) \times (0,1)$ and $\hat Y_i(h_0,x) < 0$ for $x \in (0,\pi)$.
    \end{enumerate}
\end{thm}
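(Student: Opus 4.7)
The plan is to recast the free-boundary problem in a fixed domain using the reformulation advertised in the introduction, and then deploy the Dancer--Buffoni--Toland machinery of analytic global bifurcation theory. First I would introduce the conformal maps $(\hat X_i,\hat Y_i)$ of~\eqref{eqn:conformal} together with an auxiliary horizontal distortion that makes the two maps agree on the line $\{y=h_0\}$; the unknowns then become the velocity fields $(u_i,v_i)$ in the flat strips $\R\times(0,h_0)$ and $\R\times(h_0,h_0+h_1)$, the conformal data, the distortion, and the heights $h_0,h_1$, subject to periodicity. The Euler equations \eqref{eqn:largeampstream:divfree}--\eqref{eqn:largeampstream:lap} become a pair of Cauchy--Riemann-like first order systems with constant right-hand sides, while the kinematic and harmonicity conditions on $\hat Y_i$ and the matching of the distortion yield boundary and interface conditions. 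The key structural claim (promised in the introduction) is that this combined system is Agmon--Douglis--Nirenberg elliptic, which I would verify by checking the complementing and Lopatinskii conditions at each boundary component.

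With the ADN framework in place, I would next identify the one-parameter family of trivial, $X$-independent shear solutions corresponding to the flat interface $Y=H$ with velocities consistent with \eqref{eqn:largeampstream:lap} and the prescribed average depth~\eqref{eqn:largeampstream:avg}. Linearising the full nonlinear operator at such a laminar flow, the ADN ellipticity gives Schauder estimates, and a homotopy across vorticity parameters yields that the linearisation is Fredholm of index zero on spaces of even functions. The dispersion relation obtained from separation of variables in the linearised interface condition produces a discrete set of bifurcation values depending on $(k,H,\omega_0,\omega_1)$; at a simple one-dimensional kernel I would apply the Crandall--Rabinowitz theorem to obtain a local analytic curve of nontrivial solutions near the trivial branch, with the $X$-reflection symmetry and the sign conditions $V<0$ on $(0,\pi/k)\times(0,1)$ inherited from the eigenfunction.

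The heart of the argument is then the global continuation. Using the analytic global bifurcation theorem of Dancer and Buffoni--Toland in the form adapted to ADN elliptic problems, I would extend the local curve to a maximal continuous (in fact piecewise analytic) curve $\tau\mapsto (\hat X_i,\hat Y_i,u_i,v_i,\ldots)(\tau)$. The theorem provides the dichotomy that either some a priori bound fails along the curve or the curve forms a closed loop. To rule out the loop, I would invoke nodal analysis using the strict monotonicity assertion~(iv): applying the maximum principle and Hopf boundary lemma to $V$ on the half-period $(0,\pi/k)\times(0,1)$ and to the derivative $\partial_x\hat Y_i$ on the flattened strip, I would show that strict monotonicity is preserved on the connected component emanating from the first bifurcation point and that the only way to return to the trivial branch is through a secondary bifurcation, which is excluded by the simplicity of the kernel.

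Finally, I would translate the global a priori blow-up alternative into the precise conclusion~\eqref{eqn:finalConclusion}. This is the step I expect to be the main obstacle, since compactness in the ADN setting is subtle: by standard Schauder theory for elliptic systems together with the conformal-mapping bounds, any loss of compactness along the curve forces either the $C^1$ norm of $\hat Y_i$ at the interface to blow up (equivalently $\sup|\nabla \hat Y_i|\to\infty$), the map $\hat Y_i$ to degenerate at $y=h_0$ (equivalently $\inf|\nabla\hat Y_i|\to 0$), or a stagnation point to form on $\Gamma$ (giving $\inf_\Gamma(U^2+V^2)\to 0$). Here I would use a Harnack-type argument together with the monotonicity in~(iv) and the reflection symmetry to upgrade these alternatives into the simultaneous minimum in~\eqref{eqn:finalConclusion}, completing the proof.
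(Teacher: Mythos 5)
Your proposal follows the same overall route as the paper: reformulate via conformal maps plus a harmonic horizontal distortion, verify ADN ellipticity and the Lopatinskii conditions, bifurcate from shear solutions at roots of a dispersion relation via Crandall--Rabinowitz, extend globally via the Dancer--Buffoni--Toland theorem, rule out the loop alternative via a nodal property, and translate the abstract blow-up alternative into \eqref{eqn:finalConclusion}.

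Two of your sketched steps would not, however, work as stated. For ruling out a loop, you say a return to the trivial branch ``is excluded by the simplicity of the kernel.'' But \emph{every} bifurcation point $q_n$ on the trivial branch has a one-dimensional kernel, so simplicity alone excludes nothing. What actually works in the paper is: (i) the nodal property $v_0<0$ on $(0,\pi)\times(0,1]$ is preserved along the branch by an open-and-closed argument, which needs not only the strong maximum principle and Hopf's lemma but also Serrin's edge-point lemma to control the corners $(0,1)$ and $(\pi,1)$ of the rectangle; (ii) if the curve returned to the trivial branch, the preserved nodal property would force it to land at $q_1$ with limiting eigenfunction $\dot\rho_1$, so by local uniqueness of the Crandall--Rabinowitz branch one gets $Z(T-\eps)=Z(\eps)$; (iii) this contradicts the classification of self-intersections built into the proof of the abstract theorem, not the simplicity of the kernel. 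Separately, the ``translation'' of blow-up into \eqref{eqn:finalConclusion} is not a soft Schauder/Harnack deduction: the paper must actually prove that the set $E_M$ on which the minimum in \eqref{eqn:finalConclusion} is bounded below by $1/M$ is compact, and it does so by a multi-stage bootstrap. One first controls $h_i$, $|q|$, traces of $(\hat u_i,\hat v_i)$, and $s_x$ from the $E_M$ bounds, using crucially that $\log|\nabla\hat Y_i|^2$ is harmonic after even reflection in $y$ so that $|\nabla\hat Y_i|$ attains its extrema on the interface; one then iterates ADN Schauder estimates through the chain $\hat Y_i\to s\to\chi_i,\eta_i\to u_i,v_i\to \hat u_i,\hat v_i$ to gain one degree of regularity per pass, yielding $C^{\ell,\gamma}$ control for every $\ell$. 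Neither the Harnack inequality nor the branch monotonicity or the $X$-reflection symmetry enters this step; once $E_M$ is shown compact the limit \eqref{eqn:finalConclusion} follows directly from the ``forever leaves any compact set'' form of the blow-up alternative, with no further upgrade needed.
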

Let us briefly discuss the conclusion \eqref{eqn:finalConclusion}. Either of the first two terms vanishing corresponds to a breakdown in the conformal equivalence between $\Omega_i$ and a periodic strip: if the first vanishes then the gradient of the conformal map from the strip to $\Omega_i$ is becoming unboundedly large, and similarly for the second term and the inverse map.
The third term vanishing corresponds to the formation of a stagnation point on the interface, that is a point with fluid velocity $(U,V)=0$. Such a stagnation point would necessarily occur at the corner of a hypothetical limiting solution. Indeed, at any point on $\Gamma$ where $(U,V) \ne 0$, basic elliptic estimates and \cite[Theorem 3.1$'$]{spruck:interfaceisanalytic} imply that $\Gamma$ is a locally real-analytic curve. 

To get more precise predictions about the qualitative behaviour of the bifurcation branches in Theorem~\ref{thm:largeampmain}, we numerically calculate solutions of \eqref{eqn:largeampstream} for a wide range of parameter values $H,k,\omega_0$. 
Rather than using conformal mappings, we recast the problem as a non-local integral equation involving the values of the unknowns at the interface, which we parametrise in terms of a rescaled arclength.
These unknowns (the interface, and the velocity field at the interface) are sought as functions of arclength, and are expressed as Fourier series, which in turn are truncated. 
We discretise the boundary integrals into a system of non-linear equations,  and the finite number of Fourier coefficients are solved for via Newton--Raphson iterations. We observe that depending on the values of the parameters, either $\Gamma$ approaches one or both of the walls, or a corner develops on the peak or trough of $\Gamma$; see Figure~\ref{fig:main}.
\begin{figure}
     \centering
     \begin{subfigure}[b]{0.475\textwidth}
         \centering
         \includegraphics[width=\textwidth,trim={0 30 0 0},clip]{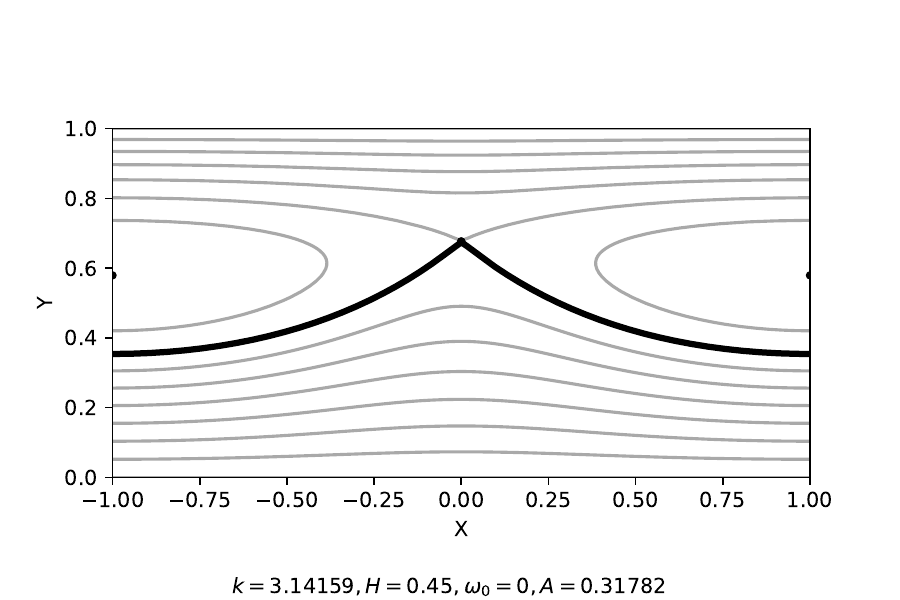}
	\caption{}
         \label{fig:main:corner}
     \end{subfigure}
     \begin{subfigure}[b]{0.475\textwidth}
         \centering
         \includegraphics[width=\textwidth,trim={0 30 0 0},clip]{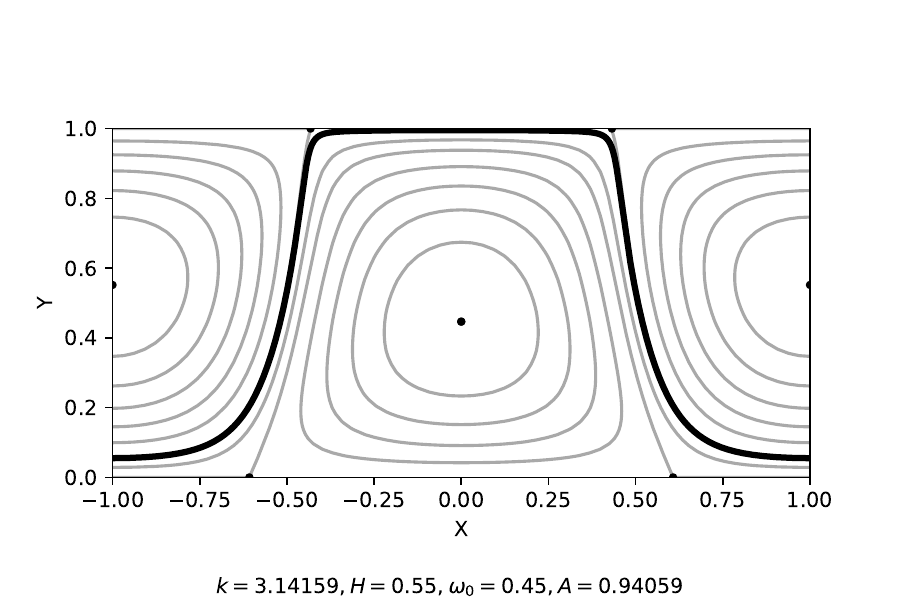}
	\caption{}
         \label{fig:main:touchwall}
     \end{subfigure}
        \caption{The streamlines of some numerical solutions near the end of their respective solution branches. 
	The interface is given by the black curve, the grey curves are interior streamlines, and the black points are stagnation points. 
	We have a solution in which the interface is almost singular in panel~(A), and a solution where the interface almost touches the upper wall in panel~(B). The solution in (A) has parameters $k=\pi$, $H=0.45$, $\omega_0=0$ and amplitude $0.31782$, and the solution in (B) has parameters $k=\pi$, $H=0.55$, $\omega_0=0.45$ and amplitude $0.94059$.}
        \label{fig:main}
\end{figure}
For certain critical parameter values, more than one of these may occur simultaneously.
When $\Gamma$ approaches the upper wall, this necessarily causes a breakdown of the conformal mapping in the upper layer, but we do not necessarily see stagnation points developing on $\Gamma$, and similarly for the lower wall and lower layer.
As expected, the formation of a corner results in stagnation on the interface.

As mentioned at the start of the introduction, the rigorous existence of steadily rotating vortex patches with singular boundaries remains an important open problem. Very recently, Wang, Zhang, and Zhou \cite{wang2024:boundaryregularityuniformlyrotating} have proven that the only possible boundary singularities for Lipschitz patches are stagnation points with corners with an angle of $\pi/2$; see \cite{overman:limiting} for an earlier result under much more restrictive hypotheses. Since their argument is local, and since our problem coincides with theirs in a small neighbourhood of any point on $\Gamma$, we expect that the same result should hold for a hypothetical limiting solution to our problem, provided that $\Gamma$ stays bounded away from the rigid walls at $Y=0$ and $Y=1$. 
Unfortunately, for our numerical method, the decay of the Fourier coefficients begins to slow as one approaches the limiting solutions. This is most likely caused by the discontinuity in the derivative of the vertical component of the interface with respect to arclength that occurs for solutions with a stagnation point on the interface. Hence, while solutions close to the limiting one can be recovered (such as those in Figure~\ref{fig:main}), in turn providing numerical evidence of the limiting behaviour, limiting solutions cannot be recovered. We can therefore not say with confidence what the angle at the crest of a limiting wave is. A more specialised solver is needed for this, and is the subject of future investigation.  

We also note that, while the formulations for both our theoretical and numerical work allow for overhanging waves, we did not observe this behaviour when sweeping parameter space numerically. Nevertheless, there are many other two-layer problems, for example internal gravity waves, where numerical results reveal overhanging waves (for example, see \cite{meiron1983overhanging}), and which could be treated by generalised versions of our approach.

Before continuing with the introduction, we pause to introduce and clarify some terminology.
By a steady solution to the two-dimensional incompressible Euler equations we mean a solution for which the fluid velocity $(U,V)$ and pressure $P$ are independent of time $t$. 
Thanks to Galilean invariance, this is equivalent to looking at travelling-wave solutions which depend on $X$ and $t$ only through the linear combination $X-ct$ for some wave speed $c$.
By a \emph{critical layer} we mean a curve along which the horizontal velocity $U$ vanishes.
We call a solution a \emph{shear} or a \emph{shear flow} if it has no dependence on the horizontal variable $X$.
Solutions with no vorticity are \emph{irrotational}, while those with vorticity are \emph{rotational}.
Irrotational flows have a velocity potential, that is, a function whose gradient is the velocity field.
Incompressibility \eqref{eqn:largeampstream:divfree} and two-dimensionality yields the existence of a \emph{stream function} $\Psi$, defined up to an additive constant by $\Psi_Y = U$ and $\Psi_X=-V$ so that it is constant along fluid particle trajectories, called \emph{streamlines}.
With our sign conventions, the vorticity can then be expressed as $U_Y-V_X = \Delta \Psi$.
Taking the curl of the steady incompressible Euler equations, one discovers that they are equivalent to the gradients of $\Psi$ and $\Delta\Psi$ being everywhere parallel.
In particular, near any non-stagnation point of a sufficiently smooth solution, one can find a \emph{vorticity function} 
$\gamma \colon \R \to \R$ such that $\Delta \Psi = \gamma(\Psi)$.
Assuming the \emph{global} existence of such a function reduces the steady Euler equations to a semilinear elliptic problem for $\Psi$, but this reduction is not possible for general steady flows.
When $\Psi$ is analytic, however, such a global vorticity function has very recently been shown to exist under surprisingly mild hypotheses \cite{elgindi:vortFnExistsIfAnalytic}.

\subsection{On the reformulation as a local elliptic system}\label{sec:intro:coords}

The key novelty in our proof of Theorem~\ref{thm:largeampmain} is a choice of coordinates which transforms \eqref{eqn:largeampstream} to a local elliptic system in a fixed domain. In the absence of critical layers, one could use a semi-hodograph transformation due to Dubreil-Jacotin \cite{DubreilJacotin}, treating the horizontal spatial coordinate $X$ and stream function $\Psi$ as independent variables and the vertical coordinate $Y$ as the sole dependent variable. However, \emph{all} small perturbations of shear solutions to \eqref{eqn:largeampstream} have critical layers and stagnation points, and so this approach is unavailable. We note that, for analogues of our problem where the velocity field is globally $C^2$, the existence of these stagnation points would follow from \cite{HamelNadirashvili:NoStagImpliesShear}.

Allowing for critical layers but requiring $\Gamma$ to be a graph $Y=\eta(X)$ enables the use of a variety of explicit  ``flattening transformations''. Indeed, the last three authors recently employed such a transformation to study small-amplitude solutions of a solitary-wave version of \eqref{eqn:largeampstream} using dynamical systems techniques \cite{ourpaper}. However, such an approach does not seem well-suited to global bifurcation. The transformed equations are of a non-standard type, coupling fluid unknowns defined on two-dimensional domains with the surface profile $\eta(X)$, an unknown function of a single variable. This significantly complicates, among other things, the calculation of Fredholm indices for general linearised operators. On the other hand, it may be possible to overcome these difficulties through the introduction of carefully chosen ``$\mathcal T$-isomorphisms'' \cite{EhrnströmWahlen:multipleCritlayers,Varholm:globalMultiCritLayer}.

Using conformal mappings as in \eqref{eqn:conformal}, we can allow for both internal stagnation points and overhanging geometry. With only a single fluid layer as in \cite{haziotWheeler}, the transformed equations could be written as an elliptic system for two or three scalar unknowns. For our two-layer problem, however, the parameterisations of the interface $\Gamma$ given by the conformal mappings $(\hat X_0,\hat Y_0),(\hat X_1,\hat Y_1)$ in \eqref{eqn:conformal} will in general disagree by some one-dimensional diffeomorphism $\R \times \{h_0\} \to \R \times \{h_0\}$, so that stating the interfacial boundary conditions \eqref{eqn:largeampstream:cont},\eqref{eqn:largeampstream:kinint} in the new variables requires introducing this diffeomorphism as an additional unknown. 
See \cite{jeanmarc:NumCalcGravCap} for a related numerical method. Unfortunately, the transformed boundary conditions are now non-local, involving compositions with the unknown diffeomorphism.
To obtain local boundary conditions, we instead work with a harmonic extension of this one-dimensional diffeomorphism to a two-dimensional diffeomorphism $\R \times (h_0,h_1) \to \R \times (h_0,h_1)$, and replace the conformal mapping $(\hat X_1,\hat Y_1)$ by its composition with this diffeomorphism. 

This choice leads to a local system of equations for nine dependent variables: four for the components of the velocity in the two layers, another four representing the mappings, and a final unknown for the diffeomorphism. Lengthy but ultimately straightforward algebraic calculations reveal that this system is elliptic in the Agmon--Douglis--Nirenberg~\cite{ADN} sense, which gives us access to powerful results from the theory of elliptic systems. 
In particular, linearised operators are Fredholm and enjoy Schauder-type estimates, and their Fredholm indices can be calculated by performing homotopies which stay in the same elliptic class. This in turn allows us to apply the analytic global bifurcation theory of Buffoni and Toland \cite{Buffoni--Toland:Book} based on work by Dancer \cite{DancerE.N.1973BTfA}.
To the best of our knowledge, the only previous work using elliptic systems tools to construct solutions to steady fluids problems is~\cite{haziotWheeler}, which considered a single-layer water wave problem. 

While we have developed this approach for \eqref{eqn:largeampstream}, it should generalise to a wide range of multi-layer problems. This includes internal gravity wave problems with a similar channel geometry but different boundary conditions, and also patch-type problems with similar boundary conditions but different geometry. For more general vorticity functions one would have to introduce the stream function $\Psi$ rather than work directly with the velocity components $U,V$; the advantage of working with $U,V$ in our problem is the particularly simple form of the boundary condition \eqref{eqn:largeampstream:kinint}.

\subsection{Related work}\label{sec:intro:lit}
While the linearisation of \eqref{eqn:largeampstream} about shear solutions dates back to the work of Rayleigh~\cite{Rayleigh:instability}, we are unaware of any rigorous work on the nonlinear steady problem beyond \cite{ourpaper}, where three of the authors perturbatively constructed solitary-wave solutions of small amplitude. We note that Hunter, Moreno-Vasquez, and Shu \cite{hmvsz:Burgers} and Biello and Hunter \cite{hunter:burgers4} have studied small-amplitude solutions to a related time-dependent problem with two semi-infinite layers.

On the other hand, there is an analogy between \eqref{eqn:largeampstream} and steady internal wave problems, where the two layers have differing constant densities $\rho_0 > \rho_1$ rather than differing vorticities and \eqref{eqn:largeampstream:kinint} is replaced by a more complicated condition expressing pressure continuity. 
Small-amplitude periodic and solitary waves of this type were constructed by Turner~\cite{turner:rapidly} as limits of solutions with smoothly-varying density using a Dubreil-Jacotin-type semi-hodograph transformation. Amick and Turner \cite{at:twofluidglobal} subsequently found large-amplitude solitary waves using a related approach, including sequences of solutions which either develop streamlines with vertical tangents -- in which case the semi-hodograph transformation breaks down -- or else broaden into fronts. 
This limitation of the semi-hodograph transformation prevents it from capturing overhanging waves.
While these internal waves are irrotational in each layer, Matioc \cite{MatiocAncaVoichita:criticalLayer} and Chen, Walsh, and Wheeler \cite{ChenWalshWheeler:WithoutPhaseSpace} have constructed small-amplitude internal waves with non-zero constant vorticity in one or both layers. In this case, as for single-layer water waves with constant \cite{Wahlen:critLayer} or affine  \cite{EhrnströmWahlen:multipleCritlayers,EhrnströmEscherVillari:multipleCritLayers}
vorticity, critical layers are possible even for perturbative solutions, and are captured using a piecewise-linear pointwise-in-$X$ flattening transformation.
We note that \eqref{eqn:largeampstream} can be recovered from this internal wave problem in the homogeneous-density limit $\rho_1 \to \rho_0$.
There are also results including surface tension effects on the interface. In particular, we mention the work of Akers, Ambrose, and Wright \cite{ambrose:vortexsheetsSmall}, who use a non-local formulation in terms of Cauchy integrals, much as we will do in the numerics section of this paper. These waves were continued to large amplitude by Ambrose, Strauss and Wright \cite{AMBROSE:vortexsheets}.
For a more thorough discussion of the literature on internal and stratified waves, we refer the reader to \cite[Section~7]{WaterWavesSurvey} and the references therein.

There is also a strong connection between \eqref{eqn:largeampstream} and rotating vortex patches. Here, working in a non-inertial rotating reference frame, the two layers
$\Omega_0$ and $\Omega_1$ are replaced by a connected set $D$ and its complement $\R^2 \setminus \overline D$, while the kinematic conditions on the walls $Y=0,1$ are replaced by asymptotic conditions for the velocity field at infinity. There are exact shear-type solutions when $D$ is a disk or annulus, and Kirchhoff~\cite{kirchhoff1897} discovered that ellipses give another class of explicit solutions. Other solutions near the disk, sometimes called ``Kelvin waves'' or ``V-states'', were rigorously constructed by Burbea \cite{Burbea:vortexPatches} (also see
\cite{HmidiMateuVerdera:vortexPatches}) and continued to global curves of solutions by Hassainia, Masmoudi, and Wheeler \cite{HassainiaMasmoudiWheeler:catseyevortpatch}. 
These works use a non-local formulation involving Cauchy integrals, but choose to parametrise the interface using the trace of a conformal mapping for $\R^2 \setminus \overline D$ rather than arc length. 
The bifurcation curves in \cite{HassainiaMasmoudiWheeler:catseyevortpatch} terminate as the interface develops either a stagnation point or a radial tangent. 
As for our problem, the perturbative solutions all exhibit stagnation points and (polar coordinate equivalents of) critical layers \cite{HassainiaMasmoudiWheeler:catseyevortpatch}. 
While there are a many related perturbative results, for instance near ellipses \cite{ccg:reg,hm:ellipse} or annuli \cite{hhmv:doubly}, the only other global bifurcation result is due to García and Haziot \cite{gh:globalpairs}, who treat symmetric pairs of patches. 
On the other hand, there are interesting `rigidity' results classifying the possible 
singularities of solutions \cite{overman:limiting,wang2024:boundaryregularityuniformlyrotating} as well as their angular velocities \cite{fraenkel:book,hmidi:trivial,gpsy:symmetry}.

Let us also briefly mention several relevant global bifurcation results for the single-layer water wave problem with vorticity. The first is due to Constantin and Strauss~\cite{ConstantinStrauss:TopologicalGlobBif}, who constructed periodic waves with general vorticity functions using the Dubreil-Jacotin partial hodograph transformation.
Later, Constantin, Strauss and V\u{a}rv\u{a}ruc\u{a} \cite{csv:global} constructed the first 
global curve allowing for overhanging waves (as well as critical layers and stagnation points). They used a subtle non-local formulation of the problem based on conformal mappings and restricted to constant vorticity. 
Whether such global bifurcation curves actually contain overhanging waves largely remains an open problem, but there are positive results in certain extreme parameter regimes \cite{HurWheeler:exacltyCrapper,HurWheeler:NearCrapper,goncalves:touching}; also see \cite{jmm:overhanging}.
Moving beyond constant vorticity, Wahlén and Weber constructed curves of potentially overhanging solutions with general vorticity functions \cite{WahlénErik2024Lsgw} using conformal mappings and a hybrid formulation which is partially locally and partially non-local. 
We also mention \cite{haziotWheeler}, which constructs possibly overturning solitary waves with constant vorticity, again using conformal mappings, but this time using a formulation as a local elliptic system. 

To obtain the numerical results described after the statement of Theorem~\ref{thm:largeampmain}, we express the fluid unknowns using Cauchy's integral formula, arriving at a non-local formulation of the problem posed on the one-dimensional interface. Similar numerical methods have been used for a wide variety of irrotational and constant vorticity free-surface flows; see for instance \cite{jeanmarc:book} and the references therein. 
Their first application to a steady multi-layer flow was by Vanden-Broeck \cite{jeanmarc:NumCalcGravCap}. 
Turning to rotational waves, large waves with constant vorticity in a single layer have been computed for infinite \cite{SimmenSaffman:1layervort} and finite depth
\cite{peregrine:1layervorticity}, as well as in the solitary wave limit
\cite{jeanmarc:solitaryConstVort}. Also see \cite{peregrine:1layervorticity} and \cite{jeanmarc:solitaryConstVort} for more complete pictures of the flows found in \cite{riberoMilewskiNachbin:1layerVort} and \cite{GUAN:solitary}.
Formulations based on Cauchy’s integral formula have also been used for time-dependent irrotational simulations, for example by Dold \cite{Dold:timeDepNumerics} for surface waves and by Guan and Vanden-Broeck \cite{Guan:novelBoundary} for interfacial waves.
Similar methodologies have been used for numerical computations of vortex patches. 
The seminal paper by Zabusky, Hughes and Roberts \cite{zabuskyHugesRoberts} developed a time-dependent algorithm to solve the dynamics of vortex patches known as \emph{contour dynamics}. The method makes use of the Green's function to express the velocity field in terms of integrals along the boundaries of the vortex patches, which in turn are used to evolve the boundaries in time. Deem and Zabuksy \cite{dz:vstates} sought time-independent ``V-states" by assuming constant translation or rotation of the flow configuration. This non-local formulation was utilised by a variety of authors to compute steady state solutions to the vortex patch problem (see, for example \cite{Pierrehumbert_1980,saffman:shapes,Dritschel_1985,woz:numerical},  and for a review, \cite{pullinReview}). 

\subsection{Outline of the paper}

Firstly, in Section~\ref{sec:largeampprelims} we introduce the coordinate change discussed in Section~\ref{sec:intro:coords} to eliminate the free boundary.
This recasts the problem, and we discuss under what conditions solutions to the recast problem correspond to solutions of the original problem. 
We then state an abstract global bifurcation theorem, Theorem~\ref{thm:buffoniToland}, and introduce the family of shear solutions which we will bifurcate from.
Each of the three sections that follow verifies a hypothesis of Theorem~\ref{thm:buffoniToland}.
In Section~\ref{sec:fredholmProperties}, we prove that all relevant linearised operators are Fredholm between appropriate Hölder spaces. 
This is achieved by showing the linearised operator constitutes a so-called $L$-elliptic system \cite{wrl:elliptic}.
Section~\ref{sec:localBifurcation} verifies the ``transversality condition'' of Crandall--Rabinowitz~\cite{cr:simple}, yielding a local bifurcation result. 
Thus our recast problem has solutions a small but non-zero distance from the shear solutions in Section~\ref{sec:largeampprelims}.
To extend to solutions far from a shear flow, we establish a compactness result in Section~\ref{sec:UniformReg} as a corollary of a much stronger result: certain $C^1$ bounds on components of a solution imply $C^{\ell,\alpha}$ bounds on the whole solution, for arbitrary $\ell \in \N$.
We use this stronger result in Section~\ref{sec:refining} to deduce that if blow-up occurs, this must be in the sense of \eqref{eqn:finalConclusion}, rather than the weaker sense of Theorem~\ref{thm:buffoniToland}(a).
We also use maximum principle arguments to rule out the possibility that the solution curve is a closed loop.
Finally, in Section~\ref{sec:numerics}, we present numerical solutions illuminating the qualitative behaviour of these large amplitude waves, consistent with Theorem~\ref{thm:largeampmain}.

\section{Preliminaries}\label{sec:largeampprelims}
Here we recast our problem \eqref{eqn:largeampstream} in an entirely novel manner. The free boundary is eliminated at the cost of making the problem highly non-linear.
However, it remains locally posed, which crucially allows us later in Section~\ref{sec:fredholmProperties} to show it gives an elliptic system when linearised, immediately yielding Schauder estimates.
We then introduce notation to discuss various function spaces.
As in many global bifurcation problems, some simple solutions are found, which serve as a starting point from which to bifurcate.
Given a solution to the reformulated  problem, we show we can recover a solution to the original problem. 
Finally, a global bifurcation theorem of Buffoni and Toland \cite{Buffoni--Toland:Book} is stated.
This result is fundamental for our approach as it allows us to find solutions with large amplitude.

\subsection{Reformulation}\label{sec:largeampprelims:reformulation}

Let $\mathcal{D}=\T \times (0,1)$, where $\T=\R/2\pi\mathbb{Z}$.
We seek two diffeomorphisms, $(X_0,Y_0) \colon \mathcal{D} \to \Omega_0$ and $(X_1,Y_1) \colon \mathcal{D} \to \Omega_1$.
The map $(X_0,Y_0)$ should be sense-preserving and $(X_1,Y_1)$ sense-reversing, that is, the determinants of their Jacobians should be everywhere positive and negative respectively. 
This is consistent with $(X_0,Y_0)$ mapping $\T \times \{0\}$ to $\T \times \{0\}$, $(X_1,Y_1)$ mapping $\T \times \{0\}$ to $\T \times \{1\}$, and both the $(X_i,Y_i)$ mapping  $\T \times \{1\}$ to the interface $\Gamma$. 

The naive idea of letting $(X_0,Y_0)$ be a conformal mapping and $(X_1,-Y_1)$ be a conformal mapping has a major drawback, namely that, generically, $X_0(x,1) \neq X_1(x,1)$ and $Y_0(x,1) \neq Y_1(x,1)$, so the boundary conditions at the interface become non-local.
We instead do some pre-processing by introducing functions $S_0$ and $S_1$:
\begin{subequations}
\label{eqn:introduceXhatYhat}
\begin{align}
    \label{eqn:introduceXhatYhat:BigS}
    S_0(x,y)=(x,h_0 y), \qquad S_1(x,y) = (x+s(x,y),h_0 + h_1 - h_1 y),
\end{align}
where the $h_i$ are positive real parameters which give us enough freedom to control the average depth of the interface and the period (in physical space) of our solution.
The function $s$ is unknown, and determined as part of our solution. We use many arguments involving ellipticity, so it is natural to let $s$ be harmonic. It has domain $\mathcal{D}$, so is $2 \pi$ periodic in $x$.
Our coordinate change is then given by
\begin{align}
    \label{eqn:introduceXhatYhat:0}
    (X_0,Y_0) &= (\hat{X}_0,\hat{Y}_0) \circ S_0 \\
    \label{eqn:introduceXhatYhat:1}
    (X_1,Y_1) &= (\hat{X}_1,\hat{Y}_1) \circ S_1,
\end{align}
\end{subequations}
where the $(\hat{X}_i,\hat{Y}_i)$ are conformal mappings. This system has enough freedom for us to insist that for all $x$, we have $X_0(x,1)=X_1(x,1)$ and $Y_0(x,1)=Y_1(x,1)$; see Figure~\ref{fig:conformalMaps}.
We also define functions $\chi_i$ and $\eta_i$ which are $2 \pi$ periodic in $x$. Let
\begin{subequations}
\label{eqn:introduceChiEta}
\begin{align}
    \label{eqn:introduceChiEta:Chi}
    \chi_i(x,y) &= X_i(x,y)-\frac{1}{k}x\\\label{eqn:introduceChiEta:eta}
    \eta_i(x,y) &= Y_i(x,y),
\end{align}
\end{subequations}
where $2\pi/k$ is the $X$ period of the solution in physical space.

\begin{figure}
     \centering
     \begin{subfigure}[b]{0.9\textwidth}
         \centering
         \includegraphics[scale=0.65]{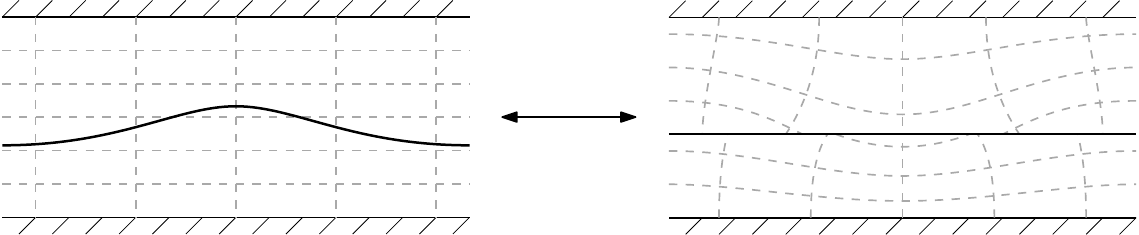}
	\caption{}
         \label{fig:conformalmaps:disagree}
	\bigskip
    \bigskip
     \end{subfigure}	
     \begin{subfigure}[b]{0.9\textwidth}
         \centering
         \includegraphics[scale=0.65]{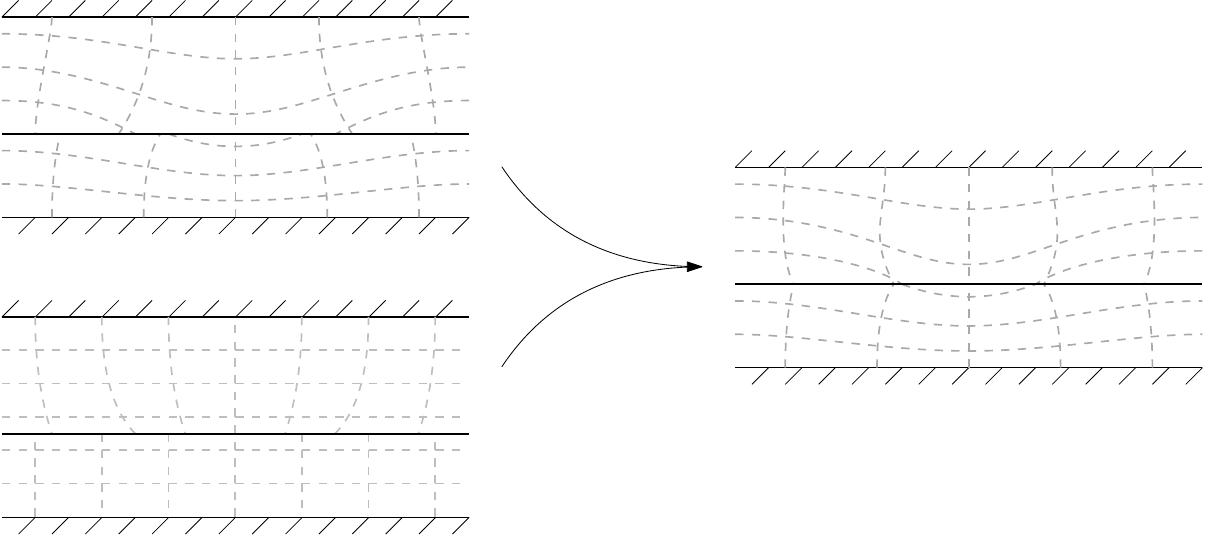}
	\caption{}
         \label{fig:conformalmaps:composition}
     \end{subfigure}
        \caption{A sketch of the issue our coordinate change rectifies. The interface is given by the black curve, and the grey dashed curves are lines on which the coordinates are constant.
        In (A) we see an example of conformal mappings disagreeing at the interface.
        We then see in (B) how composing a pair of conformal maps (upper left) with a pointwise-in-$y$ coordinate transform in the upper layer (lower left) yields a coordinate change which is continuous across the interface (right).}
        \label{fig:conformalMaps}
\end{figure}

Finally, we define the modified velocity field.
Let
\begin{subequations}
\begin{align}
    \label{eqn:defineq}q &= \frac 1 {2\pi} \int_0^1 \int_0^{2 \pi} U_0(X_0(x,y),Y_0(x,y)) \  dx \ dy \\
    \label{eqn:defineLilu}u_i(x,y)&= U_i(X_i(x,y),Y_i(x,y))-q\\
    v_i(x,y)&= V_i(X_i(x,y),Y_i(x,y)).
\end{align}
\end{subequations}
The parameter $q$ is related to the mass flux in the lower layer. Rather than simply using $U_0(X_0(x,y),Y_0(x,y))$ as our new independent variable, we have split it into its $0$th Fourier mode $q$, which we use as a bifurcation parameter, and the remainder $u_0$. 
Subtracting $q$ from $U_1(X_1(x,y),Y_1(x,y))$ ensures we retain continuity at the interface.

We can now rewrite our problem in terms of four known parameters, three unknown parameters, and nine unknown functions with domain $\mathcal{D}$. The known parameters are $\omega_0$, $\omega_1$, $k$, and $H$.
The unknown parameters are $h_0$, $h_1$, and $q$.
The functions are $u_0$, $v_0$, $\chi_0$, $\eta_0$, $u_1$, $v_1$, $\chi_1$, $\eta_1$, and $s$. They satisfy
\begin{subequations}\label{eqn:largeampstream2}
  \begin{alignat}{2}
	\label{eqn:largeampstream2:lowerdivfree}
  	0&=  h_0 u_{0x} + v_{0y} - h_0 \omega_0 \eta_{0x} \\
  	\label{eqn:largeampstream2:lowervort}
    0&=  u_{0y} - h_0 v_{0x} - \omega_0 \eta_{0y}  \\
    \label{eqn:largeampstream2:lowerCR1}
    0&=  h_0( k \chi_{0x} +1) - k \eta_{0y}  \\
    \label{eqn:largeampstream2:lowerCR2}
    0&=  \chi_{0y} + h_0 \eta_{0x} \\
    \label{eqn:largeampstream2:upperdivfree}
    0&=  k\eta_{1y} u_{1x} - k\eta_{1x} u_{1y} - k\chi_{1y} v_{1x} +(k\chi_{1x}+1) v_{1y}\\
    \label{eqn:largeampstream2:uppervort}
    0&=  -k\chi_{1y} u_{1x} +(k\chi_{1x}+1) u_{1y} - k\eta_{1y} v_{1x} +k\eta_{1x} v_{1y} - ((k\chi_{1x}+1)\eta_{1y}-k\chi_{1y}\eta_{1x})\omega_1  \\
    \label{eqn:largeampstream2:upperCR1}
    0&=  h_1 (k\chi_{1x}+1) - k\eta_{1x} s_y + k(s_x+1)\eta_{1y} \\
    \label{eqn:largeampstream2:upperCR2}
    0&=  (k\chi_{1x} +1) s_y + k h_1 \eta_{1x} - k\chi_{1y}( s_x+1) \\
    \label{eqn:largeampstream2:lap}
    0&=  \Delta s,
  \end{alignat}
in $\mathcal{D}$, with boundary conditions
  \begin{alignat}{2}
	\label{eqn:largeampstream2:kinbot}
  	0&=  v_0(x,0)\\
  	\label{eqn:largeampstream2:Ybot}
  	0&=  \eta_0(x,0)\\
  	\label{eqn:largeampstream2:kintop}
  	0&=  v_1(x,0)\\
  	\label{eqn:largeampstream2:Ytop}
  	0&=  \eta_1(x,0)-1\\
        \label{eqn:largeampstream2:sTop}
        0&=  s(x,0)\\
        \intertext{and}
  	\label{eqn:largeampstream2:ucts}
  	0&=  u_0(x,1) - u_1(x,1) \\
  	\label{eqn:largeampstream2:vcts}
  	0&=  v_0(x,1)-v_1(x,1)  \\
  	\label{eqn:largeampstream2:Xcts}
  	0&=  \chi_0(x,1)-\chi_1(x,1)  \\
  	\label{eqn:largeampstream2:Ycts}
  	0&=  \eta_0(x,1)-\eta_1(x,1)  \\
  	\label{eqn:largeampstream2:kinint}
  	0&=  (u_0(x,1)+q)\eta_{0x}(x,1)-v_0(x,1)\left(\chi_{0x}(x,1)+\frac{1}{k}\right) ,
  \end{alignat}
on the rigid walls and scalar constraints
    \begin{align}
        \label{eqn:largeampstream2:avgDepth}
        0&=  \frac{1}{2\pi} \int_0^{2\pi} (k \chi_{0x}(x,1)+1)\eta_0(x,1) \ dx - H\\
        \label{eqn:largeampstream2:flux}
        0&=  \int_0^1 \int_0^{2 \pi} u_0 \ dx \ dy .
    \end{align}
\end{subequations}
We can think of \eqref{eqn:largeampstream2:lowerdivfree} and \eqref{eqn:largeampstream2:upperdivfree} as incompressibility, and \eqref{eqn:largeampstream2:lowervort} and \eqref{eqn:largeampstream2:uppervort} as the vorticity conditions.
Note that \eqref{eqn:largeampstream2:lowerdivfree} and \eqref{eqn:largeampstream2:lowervort} are not obtained directly by applying the coordinate transformation to \eqref{eqn:largeampstream:divfree} and \eqref{eqn:largeampstream:lap}; some additional but straightforward rearranging is also needed.
The equations \eqref{eqn:largeampstream2:lowerCR1}, \eqref{eqn:largeampstream2:lowerCR2}, \eqref{eqn:largeampstream2:upperCR1}, and \eqref{eqn:largeampstream2:upperCR2} are derived from the Cauchy--Riemann equations and \eqref{eqn:introduceXhatYhat}.
The kinematic boundary conditions at the bottom, top, and interface become equations \eqref{eqn:largeampstream2:kinbot}, \eqref{eqn:largeampstream2:kintop}, and \eqref{eqn:largeampstream2:kinint} respectively.
Equations \eqref{eqn:largeampstream2:ucts}--\eqref{eqn:largeampstream2:Ycts} enforce continuity at the interface.
Applying the divergence theorem to the vector field $(0,Y)$ on $\Omega_0$, and enforcing the condition that the interface has average depth $H$, yields \eqref{eqn:largeampstream2:avgDepth}.
The flux condition \eqref{eqn:largeampstream2:flux} is derived from \eqref{eqn:defineq} and \eqref{eqn:defineLilu}.

It is worth noting that we work with the velocity vector $(U,V)$ rather than the stream function $\Psi$, and furthermore do not remove all occurrences of $\hat X_i$ and recover them later as the harmonic conjugate of $-\hat Y_i$.
The advantages of this are that the resulting system is first order, and, while large, is reasonably sparse.
We found that this sparsity was helpful in the very delicate computations in Section~\ref{sec:lopatinskii}. 
The use of computer algebra also assisted greatly with the repetition and inelegance inherent to calculations with large matrices. 

\subsection{Notation}
We now introduce some more compact notation. Let
\begin{align*}
    \zeta &= (u_0, v_0, \chi_0, \eta_0, u_1, v_1, \chi_1, \eta_1, s; h_0, h_1)\\
    \zeta^\circ &= (u_0, v_0, \chi_0, \eta_0, u_1, v_1, \chi_1, \eta_1, s),
\end{align*}
and $\mathcal{F}(\zeta,q)$ correspond to the right-hand side of equations \eqref{eqn:largeampstream2:lowerdivfree}--\eqref{eqn:largeampstream2:flux}. In other words, a solution of \eqref{eqn:largeampstream2} is $(\zeta,q)$ such that
\begin{align}
    \label{eqn:defineMathcalF}
     \mathcal{F}(\zeta, q)=0.
\end{align}
In order to describe functional and scalar equations separately, let $\mathcal{F}^\circ(\zeta,q)$ correspond to the right-hand side of the functional equations, \eqref{eqn:largeampstream2:lowerdivfree}--\eqref{eqn:largeampstream2:kinint}, and let $\mathcal{F}^s(\zeta,q)$ correspond to the right-hand side of the scalar equations, \eqref{eqn:largeampstream2:avgDepth} and \eqref{eqn:largeampstream2:flux}.
Thus $\mathcal{F} = (\mathcal{F}^\circ,\mathcal{F}^s)$.

We introduce function spaces for the domain and codomain of $\mathcal{F}$.
For spaces related to the domain we use $\mathcal{X}$s, and for spaces related to the codomain, $\mathcal{Y}$s.
A superscript $^\circ$ will generally mean ``without the scalar components'', or ``functional components only'', as with $\zeta^\circ$ and $\mathcal{F}^\circ$. 
This is an imprecise comment, but we will be more precise in the definitions that follow.
Given a bounded open set $\mathcal{O}$, a non-negative integer $n$, and $\gamma \in (0,1)$, let $C^{n,\gamma}(\overline {\mathcal{O}})$ be the set of functions with domain $\mathcal{O}$, whose derivatives of order $\leq n$ exist and have continuous extension to the boundary with finite $\gamma$ Hölder norm.

Let
\begin{subequations}\label{eqn:defineNonParitySpaces}
\begin{align}
    \label{eqn:defineXnpCirc}
    \mathcal{X}_\np^\circ &= (C^{2,\gamma}(\overline{\mathcal{D}}))^9\\
    \label{eqn:defineYnpCirc}
    \mathcal{Y}_\np^\circ &= \left[ (C^{1,\gamma}(\overline{\mathcal{D}}))^8 \times C^{0,\gamma}(\overline{\mathcal{D}}) \right] \times \left[(C^{2,\gamma}(\overline{\T}))^5 \right] \times \left[ (C^{2,\gamma}(\overline{\T}))^4 \times C^{1,\gamma}(\overline{\T}) \right].
\end{align}
\end{subequations}
These function spaces have no parity conditions, hence the use of the subscript $\np$.
The formula for $\mathcal{Y}_\np^\circ$ is given as the product of three spaces in square brackets.
The space in the first set of square brackets corresponds to \eqref{eqn:largeampstream2:lowerdivfree}--\eqref{eqn:largeampstream2:lap}, the second to \eqref{eqn:largeampstream2:kinbot}--\eqref{eqn:largeampstream2:sTop}, and the third to \eqref{eqn:largeampstream2:ucts}--\eqref{eqn:largeampstream2:kinint}.
The degrees of regularity are chosen to be the least regular Hölder spaces for which the arguments of Section~\ref{sec:lopatinskii} below hold.
These arguments apply the theory of elliptic systems to show that the associated linearised operators are Fredholm.

We now introduce parity constraints.
Let
\begin{subequations}
\begin{align}
    \label{eqn:defineXcirc}
    \mathcal{X}^\circ &= \{ \zeta^\circ \in \mathcal{X}_\np^\circ  \mid u_0, \eta_0, u_1, \eta_1 \text{ are even in $x$, and } v_0,\chi_0,v_1, \chi_1,s \text{ are odd in $x$} \}\\
    \label{eqn:defineX}
    \mathcal{X} &= \mathcal{X}^\circ \times \R^2 .
\end{align}
\end{subequations}
Let $\mathcal{Y}^\circ$ be the subset of $\mathcal{Y}^\circ_\np$ such that the elements have the same parity in $x$ as the elements of $\mathcal{F}^\circ(\mathcal{X})$.
Explicitly, components 1, 4, 5, 8, 9, 10, 12, 14, 16, 17 and 19 are odd in $x$, and 2, 3, 6, 7, 11, 13, 15, 18 are even in $x$.
The codomain $\mathcal{Y}$ is then given by $\mathcal{Y} =\mathcal{Y}^\circ \times \R^2$.
The parity conditions are imposed due to two symmetries of the problem, translation and reflection.
We have a translational symmetry in that given a solution to \eqref{eqn:largeampstream}, transformations of the form $U (\cdot, \cdot)  \mapsto U (\cdot + a, \cdot)$ give rise to
others. 
To eliminate these solutions, which are in some sense all the same, we insist $U$ is even in
$X$, and $V$ is odd in $X$.
This is motivated by the fact \eqref{eqn:largeampstream} has a reflectional symmetry, in that if $(U,V)$ solves \eqref{eqn:largeampstream}, then the functions $(U(-X,Y),-V(-X,Y))$ give another solution.

We seek $(\zeta,q) \in \mathcal{X} \times \R$ solving \eqref{eqn:largeampstream2}. 

\subsection{Shear solutions}
As with many problems of this type, we start by finding a family of trivial solutions to \eqref{eqn:largeampstream2}. For an arbitrary constant $c \in \R$, the shear flow given by 
\begin{equation}\label{eqn:defineShear}
    U=\begin{cases}
        \omega_0(Y-H)+c & Y \in [0,H]\\
        \omega_1(Y-H)+c & Y \in [H,1],
    \end{cases} \qquad V=0, \qquad  \Omega_0=\R\times(0,H), \qquad  \Omega_1=\R\times(H,1),
\end{equation}
solves the physical problem \eqref{eqn:largeampstream}. See Figure~\ref{fig:shear}.
\begin{figure}
     \centering
     \begin{subfigure}[b]{0.35\textwidth}
         \centering
         \includegraphics[width=\textwidth]{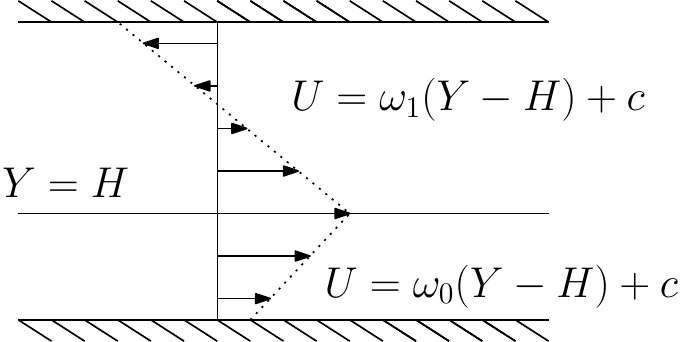}
     \end{subfigure}
        \caption{A shear solution in \eqref{eqn:defineShear}.}
        \label{fig:shear}
\end{figure}
Applying the coordinate change gives $q= c-\frac12\omega_0H$, and
\begin{align*}
     u_0^*&= \omega_0 H (y-\tfrac 12 ), &  v_0^* &=0, & \chi_0^*&= 0, &  \eta_0^* &=Hy,\\
      u_1^*&=\omega_1 (1-H)(1-y) + \tfrac 12 \omega_0 H ,&  v_1^* &=0  ,&  \chi_1^*&=0 ,&  \eta_1^* &=1-(1-H)y ,\\
      s^*&=0,\\
      h_0^*&=kH, & h_1^*&=k(1-H).
\end{align*}
As before, we write these more compactly as
\begin{equation}\label{eqn:defZetaStar}
     \zeta^\circ_*=(u_0^*,\ldots,s^*), \qquad \zeta_*=(u_0^*,\ldots,h_1^*).
\end{equation}
Thus, for any $q \in \R$, inserting $(\zeta_*,q)$ into  \eqref{eqn:largeampstream2} gives 0, so we have a family of trivial solutions.

As we will see in Proposition~\ref{prop:CrandallRabinowitzKernel} below,
the value of $q$ that we bifurcate from lies in the interval \[ \left(-\tfrac 12 \omega_0H, -\tfrac 12 \omega_0H + H(1-H) \right).\]
Shear flows with $q$ in this range exhibit some interesting properties.
Firstly, there exists at least one critical layer.
Secondly, if $\omega_0<1-2H$ or $\omega_0>2-2H$, then no global vorticity function exists.
These observations extend to nearby non-shear flows,
and can be shown by adapting the arguments from \cite[Proposition~2.4]{ourpaper}.

\subsection{Correspondence to physical solutions}\label{sec:correspondance}

We restrict the domain of $\mathcal{F}$ from $\mathcal{X} \times \R$ to a set $\mathcal{U}$ on which solutions to \eqref{eqn:largeampstream2} have various desirable properties.
The coordinate changes should be bijective, the interface should not intersect the walls or itself, and the fluid should not stagnate on the interface.
With these in mind, let $\mathcal{U}$ be the subset of $\mathcal{X} \times \R$ such that for all $(x,y) \in \mathcal{D}$, and all $(a,b) \in [0,4\pi]^2$,
\begin{subequations}
\begin{align}
    \label{eqn:defU:h} 0&<h_0, h_1\\
    \label{eqn:defU:s}0&< s_x(x,y)+1\\
    \label{eqn:defU:gradX} 0&< (\chi_{ix}(x,y)+1/k)^2+ \eta_{ix}(x,y)^2 \\
    \label{eqn:defU:interfaceStagnation} 0&<(u_0(x,1)+q)^2 + v_0(x,1)^2 \\
    \label{eqn:defU:wallIntersect} 0&<\eta_0(x,1)<1\\
    \label{eqn:defU:selfIntersect} 0&<|\sigma(a,b)|,
\end{align}
\end{subequations}
where $\sigma$ is defined as
\begin{equation}
\label{eqn:defineSigma}
\begin{aligned}
     \sigma\colon  & [0,4\pi]^2 \mapsto \R^2 \\
     \sigma(a,b)&= \begin{cases}
    \dfrac{ (X_0(a,1),Y_0(a,1)) - (X_0( b,1),Y_0(b,1))}{a-b} & a \neq b \\[2ex]
    (X_{0x}(a,1),Y_{0x}(a,1)) & a=b.
    \end{cases}
\end{aligned}    
\end{equation}

The conditions used to define $\mathcal{U}$ are all very natural.
If one of the $h_i=0$, then we would be seeking a conformal equivalence between $\Omega_i$ and $\varnothing$ which is clearly impossible. 
Much of our analysis holds for $h_i<0$, but we construct a curve of solutions starting in $\mathcal{U}$, so in order to access a solution with, say, $h_0<0$, we would have to pass through a solution with $h_0=0$.

The conditions \eqref{eqn:defU:s} and \eqref{eqn:defU:gradX} help ensure that the coordinate change is well-behaved.
If either of these quantities are zero in $\mathcal{D}$, then the coordinate change is not bijective, and if they are zero on $\partial \mathcal{D}$, then the interface will be too irregular to extend a conformal map.
Lemma~\ref{lem:bijectiveInU} contains further details.

The condition \eqref{eqn:defU:interfaceStagnation} rules out stagnation points on the interface, and will be needed several times to show that various systems are elliptic.
Intuitively, if equality held at some point, then the kinematic boundary condition \eqref{eqn:largeampstream2:kinint} would not enforce any behaviour on $\chi_0$ and $\eta_0$ at this point, leaving our system under-determined.
Alternatively, we can note that if we have a solution of \eqref{eqn:largeampstream2} corresponding to a solution $(U,V)$ of \eqref{eqn:largeampstream}, then we can define a stream function $\Psi(X,Y)$.
The interface is a level curve of $\Psi$, and \eqref{eqn:defU:interfaceStagnation} ensures that $|\nabla \Psi| \neq 0$ on the interface.
Thus, since $\Psi$ is globally $C^1$, the implicit function theorem implies that the interface is a $C^1$ curve. Then, applying \cite[Theorem 3.1$'$]{spruck:interfaceisanalytic} yields analyticity.

Equations \eqref{eqn:defU:wallIntersect} and \eqref{eqn:defU:selfIntersect} correspond to insisting the interface does not intersect a wall or itself respectively.
This will help us show that the coordinate transforms are bijective. It is not immediately obvious that \eqref{eqn:defU:selfIntersect} rules out all self-intersections since $\sigma$ only takes arguments in $[0,4\pi]$, rather than all of $\R$. We address this in the following lemma.

\begin{lem}
    Let $X_0,Y_0 \in C^1(\overline{\R\times [0,1]})$ with $X_0$ odd in $x$, $Y_0$ even in $x$, $X_0(x+2\pi,y) = X_0(x,y)+2\pi/k$, and $Y_0(x+2\pi,y) = Y_0(x,y)$. Recall the definition of $\sigma$, \eqref{eqn:defineSigma}.
    If the interface given by $(X_0(x,1),Y_0(x,1))$ self-intersects, then there exists $(\alpha,\beta) \in [0,4\pi]^2$ such that $\sigma(\alpha,\beta)=0$.
\begin{proof}
    Suppose $(X_0(a,1),Y_0(a,1))=(X_0(b,1),Y_0(b,1))$ for some $(a,b) \in \R^2$ with $a<b$.
    By periodicity, we can assume that $a \in [0,2\pi)$, and $b \in [2n\pi, 2(n+1)\pi)$, for some non-negative integer $n$.
    If $n=0$ or $n=1$, we are done.
    If not, then let $\tilde b = b-2n\pi$.
    Notice that $X_0(\tilde b,1)=X_0(a,1)-2n\pi/k$.
    Thus, we have $a, \tilde b \in [0,2\pi)$ with either $X_0(a,1)\geq 2\pi/k$, or $X_0(\tilde b,1)<0$.
    If the former, then since $X_0(0,1)=0$, by continuity there exists $x \in (0,2\pi)$ such that $X_0(x,1) = 2\pi/k$.
    If the latter, then since $X_0(2\pi,1)=2\pi/k$, there exists $x \in (0,2\pi)$ such that $X_0(x,1) = 0$.

    If we have $x\in (0,2\pi)$ such that $X_0(x,1) = 2\pi/k$, then
    \begin{align*}
        X_0(4\pi - x,1) &= -X_0( x,1) +4\pi/k\\
        &= 2\pi/k\\
        &= X_0( x,1),
    \end{align*}
    and $Y_0(4\pi - x,1) = Y_0( x,1)$, therefore, $\sigma(x,4\pi - x) = 0$.
    If we have $x\in (0,2\pi)$ such that $X_0(x,1) = 0$, a similar argument shows that $\sigma(2\pi+x, 2\pi-x)=0$.
\end{proof}
\end{lem}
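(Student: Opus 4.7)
The plan is to exploit the two available symmetries, the quasi-periodicity $X_0(x+2\pi,y)=X_0(x,y)+2\pi/k$ and the reflection symmetry (odd $X_0$, even $Y_0$), in order to convert any self-intersection into one whose two arguments lie in $[0,4\pi]$. Starting from the hypothetical equality $(X_0(a,1),Y_0(a,1))=(X_0(b,1),Y_0(b,1))$ with $a<b$, the goal is to produce $(\alpha,\beta)\in[0,4\pi]^2$ with $\sigma(\alpha,\beta)=0$; by \eqref{eqn:defineSigma} this amounts to either a collision of two distinct arguments or a degenerate tangent at a single argument.

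I would first use periodicity to shift $a$ into $[0,2\pi)$ and write $b\in[2n\pi,2(n+1)\pi)$ for some non-negative integer $n$. The cases $n\in\{0,1\}$ are immediate since then both $a$ and $b$ already lie in $[0,4\pi]$, so the content is in $n\geq 2$. Setting $\tilde b=b-2n\pi\in[0,2\pi)$, quasi-periodicity yields $X_0(\tilde b,1)=X_0(a,1)-2n\pi/k$; a short arithmetic check shows that $X_0(a,1)$ and $X_0(\tilde b,1)$ cannot simultaneously lie in $[0,2\pi/k]$ when $n\geq 2$, so at least one of them falls outside this interval.

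The intermediate value theorem, combined with the endpoint values $X_0(0,1)=0$ (oddness) and $X_0(2\pi,1)=2\pi/k$ (quasi-periodicity at $0$), then produces some $x\in(0,2\pi)$ at which $X_0(x,1)$ equals either $0$ or $2\pi/k$. From here the reflection symmetry explicitly manufactures a companion inside $[0,4\pi]$: in the case $X_0(x,1)=2\pi/k$, the point $4\pi-x$ satisfies $X_0(4\pi-x,1)=-X_0(x,1)+4\pi/k=2\pi/k$ and $Y_0(4\pi-x,1)=Y_0(x,1)$, so $\sigma(x,4\pi-x)=0$; in the case $X_0(x,1)=0$, the analogous pair $(2\pi+x,2\pi-x)$ does the job. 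The only substantive step is this intermediate value argument; everything else is routine bookkeeping with the two symmetries, so I expect no serious obstacles beyond verifying that a self-intersection between widely-separated periods really does force $X_0(\cdot,1)$ off the interval $[0,2\pi/k]$.
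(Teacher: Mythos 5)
Your proposal is correct and follows the same route as the paper's proof: reduce to $a\in[0,2\pi)$, $b\in[2n\pi,2(n+1)\pi)$, observe that for $n\geq 2$ the quasi-periodicity forces $X_0(a,1)$ or $X_0(\tilde b,1)$ off $[0,2\pi/k]$, apply the intermediate value theorem with the endpoint values $X_0(0,1)=0$ and $X_0(2\pi,1)=2\pi/k$ to find $x\in(0,2\pi)$ with $X_0(x,1)\in\{0,2\pi/k\}$, and then use the reflection symmetry to pair $x$ with $4\pi-x$ or $2\pi\pm x$. The only cosmetic difference is that the paper pins down the dichotomy to ``$X_0(a,1)\geq 2\pi/k$ or $X_0(\tilde b,1)<0$'' whereas you state it more loosely as ``at least one lies outside $[0,2\pi/k]$''; both versions feed the IVT step equally well.
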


As a consequence of defining $\mathcal{U}$ in the way we have, many of the mappings we defined are bijective at solutions of \eqref{eqn:largeampstream2}.

\begin{lem}\label{lem:bijectiveInU}
Let $(\zeta,q) \in \mathcal{U}$ solve \eqref{eqn:largeampstream2}. The $S_i$, defined in \eqref{eqn:introduceXhatYhat:BigS} are bijective. Furthermore, the maps $(\hat X_i,\hat Y_i)$ defined by \eqref{eqn:introduceXhatYhat:0} and \eqref{eqn:introduceXhatYhat:1} exist and are conformal. Consequently the coordinate mappings $( X_i, Y_i)$ are bijective, as desired.
\begin{proof}
We prove the result in the $i=1$ case. The $i=0$ case follows from an almost identical but simpler argument.
By assumption $s_x(x,y)>-1$, meaning $(s+x)_x$ is positive everywhere, and so $S_1$ is injective.
Now, we can define
\begin{equation}\label{eqn:S1inverse}
    (\hat{X}_1,\hat{Y}_1) = (X_1,Y_1)\circ S_1^{-1},
\end{equation}
and note that this definition agrees with \eqref{eqn:introduceXhatYhat:1}.
Inserting this definition into \eqref{eqn:largeampstream2:upperCR1} and \eqref{eqn:largeampstream2:upperCR2} shows that $(\hat{X}_1,\hat{Y}_1)$ satisfies the Cauchy--Riemann equations.
Applying the chain rule to \eqref{eqn:introduceXhatYhat} gives
\begin{equation*}
    (s_x+1)^2((\hat X_{1x}\circ S_1)^2+(\hat Y_{1x}\circ S_1)^2) = (\chi_{1x}+1/k)^2+ \eta_{1x}^2.
\end{equation*}
Thus \eqref{eqn:defU:s} and \eqref{eqn:defU:gradX} imply that $|\nabla \hat X_i|$ and $|\nabla \hat Y_i|$ are never zero.

We now use the Darboux--Picard Theorem; see, for example \cite[Theorem~9.16]{Burckel:darbouxPicard}.
This states that if $D \in \C$ is open and bounded, and $\partial D$ is a Jordan curve, $f \colon \overline{D} \to \C$ is a continuous function, holomorphic in $D$, and injective on $\partial D$, then the image of $f$ is the inside of the Jordan curve $f(\partial D)$, and $f$ is a conformal map onto its image.

Here, let $D = [0,2\pi] \times [h_0,h_0+h_1]$, and $f=(\hat X_1 , \hat Y_1)$. The only hypothesis left to verify is that $f$ is injective on $\partial D$. Splitting $\partial D$ into its 4 edges, let
\begin{align*}
    C_1&= \{ (0,y) \mid y \in (h_0,h_0+h_1) \}\\
    C_2&= \{ (x,h_0) \mid x \in [0,2\pi] \}\\
    C_3&= \{ (2\pi,y) \mid y \in (h_0,h_0+h_1) \}\\
    C_4&= \{ (x,h_0+h_1) \mid x \in [0,2\pi] \}.
\end{align*}
We show that $f$ is injective on each of these components, and that the images of these components are pairwise disjoint, and so $f$ is injective.
Consider $f|_{C_1}$. 
By parity $\hat X_1(0,y)=0$, which yields $0=\hat X_{1y}(0,y) = \hat Y_{1x}(0,y)$. 
It was observed earlier that $|\nabla \hat Y_1| \neq 0$, so $\hat Y_{1y}(0,y)$ can never be 0. 
Thus, $f(C_1)=\{0\} \times (Y_*,1)$ for some $Y_* \in (0,1)$, and $f|_{C_1}$ is injective.
Similarly, using periodicity, we see that $f(C_3)=\{2\pi/k\} \times (Y_{**},1)$ for some $Y_{**} \in (0,1)$, and $f|_{C_3}$ is injective.
The boundary condition \eqref{eqn:largeampstream2:Ytop} gives $f(C_4)=[0,2\pi/k] \times \{1\}$, and $f|_{C_4}$ is injective. 
These are all pairwise disjoint.
Condition \eqref{eqn:defU:selfIntersect} shows that the interface $f(C_2)$ is a curve which does not intersect itself, so we again use the fact $|\nabla \hat X_1| \neq 0$ to deduce that $f$ is injective on $C_2$. 
Condition \eqref{eqn:defU:wallIntersect} shows that $f(C_2)$ does not intersect $f(C_4)$. 
Parity, periodicity and \eqref{eqn:defU:selfIntersect} also give that $f(C_2)$ does not intersect $f(C_1)$ or $f(C_3)$, as an intersection here guarantees a self-intersection of the interface.

Therefore, $f$ is injective on $\partial D$, and $f(\partial D) = \partial \{ (X,Y) \in \Omega_1 \mid 0<X<2\pi/k\}$ so we apply Darboux--Picard to conclude that $(\hat X_1, \hat Y_1)$ gives a conformal equivalence between $D$ and $ \{ (X,Y) \in \Omega_1 \mid 0<X<2\pi/k\}$. 
It is then easy to show $(\hat X_1, \hat Y_1)$ also gives a conformal equivalence between $\mathcal{D}$ and $\Omega_1$.
\end{proof}
\end{lem}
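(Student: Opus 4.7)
The plan is to handle $i=0$ and $i=1$ in parallel, but focus the argument on the harder $i=1$ case since $S_0$ is just the affine scaling $(x,y)\mapsto(x,h_0 y)$ which is a bijection by $h_0>0$ (from \eqref{eqn:defU:h}). For $S_1(x,y)=(x+s(x,y),h_0+h_1-h_1y)$, I would first note that $h_1>0$ makes the second component a bijection in $y$, and then use \eqref{eqn:defU:s}, which says $s_x+1>0$ pointwise, together with the $2\pi$-periodicity and oddness of $s$ in $x$, to conclude that $x\mapsto x+s(x,y)$ is a homeomorphism of $\R$ for every fixed $y$. This gives $S_1$ as a bijection. Having the inverse, I would then \emph{define} $(\hat X_1,\hat Y_1):=(X_1,Y_1)\circ S_1^{-1}$ as in \eqref{eqn:S1inverse}, and check consistency with \eqref{eqn:introduceXhatYhat:1} together with the Cauchy--Riemann equations: a short chain-rule calculation reading \eqref{eqn:largeampstream2:upperCR1}--\eqref{eqn:largeampstream2:upperCR2} as a linear system for the partial derivatives of $(\hat X_1,\hat Y_1)$ yields $\hat X_{1x}=\hat Y_{1y}$ and $\hat X_{1y}=-\hat Y_{1x}$, so $(\hat X_1,\hat Y_1)$ is holomorphic on $\R\times(h_0,h_0+h_1)$.

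Next I would verify that $|\nabla \hat X_1|$ and $|\nabla \hat Y_1|$ never vanish. Differentiating \eqref{eqn:introduceXhatYhat:1} via the chain rule and using the Cauchy--Riemann equations gives the identity
\begin{equation*}
    (s_x+1)^2\bigl((\hat X_{1x}\circ S_1)^2+(\hat Y_{1x}\circ S_1)^2\bigr)=\bigl(\chi_{1x}+1/k\bigr)^2+\eta_{1x}^2,
\end{equation*}
so the non-vanishing on $\overline{\mathcal D}$ follows from \eqref{eqn:defU:s} and \eqref{eqn:defU:gradX}. To upgrade $(\hat X_1,\hat Y_1)$ from ``holomorphic'' to ``conformal equivalence,'' I would invoke the Darboux--Picard theorem on the fundamental domain $D=[0,2\pi]\times[h_0,h_0+h_1]$: once $(\hat X_1,\hat Y_1)$ is shown to be injective on $\partial D$, the image of $\partial D$ is a Jordan curve and the map is a conformal equivalence onto its interior.

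The hard part will be the boundary injectivity on $\partial D$. I would split $\partial D$ into the two vertical segments $C_1=\{0\}\times(h_0,h_0+h_1)$ and $C_3=\{2\pi\}\times(h_0,h_0+h_1)$, the top segment $C_4=[0,2\pi]\times\{h_0+h_1\}$, and the interface segment $C_2=[0,2\pi]\times\{h_0\}$. On $C_1$: parity gives $\hat X_1(0,y)=0$, which via Cauchy--Riemann forces $\hat Y_{1x}(0,y)=0$, so by $|\nabla\hat Y_1|\neq0$ the component $\hat Y_{1y}$ has a fixed sign and $\hat Y_1(0,\cdot)$ is strictly monotone; injectivity on $C_1$ follows. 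The same argument, together with the quasi-periodicity $\hat X_1(x+2\pi,y)=\hat X_1(x,y)+2\pi/k$, handles $C_3$. On $C_4$ the boundary condition \eqref{eqn:largeampstream2:Ytop} fixes $\hat Y_1=1$ and $\hat X_1$ is strictly monotone by a similar argument, giving injectivity and also separating the image of $C_4$ from those of $C_1,C_3$. On $C_2$, the non-self-intersection condition \eqref{eqn:defU:selfIntersect} applied to the previous lemma rules out collisions of $\hat X_1(C_2)$ with itself and with the translated image of $C_2$ under the lattice, and \eqref{eqn:defU:wallIntersect} separates $\hat Y_1(C_2)$ from $\hat Y_1(C_4)=\{1\}$. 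A moment's thought about parity and periodicity shows the images of $C_1,C_3$ also do not meet that of $C_2$, so $(\hat X_1,\hat Y_1)$ is injective on all of $\partial D$.

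With Darboux--Picard in hand, $(\hat X_1,\hat Y_1)$ maps $D$ conformally onto the one-period slice $\{(X,Y)\in\Omega_1:0<X<2\pi/k\}$, and by periodicity and parity one extends this to a conformal equivalence between $\R\times(h_0,h_0+h_1)$ and $\Omega_1$. Since $(X_1,Y_1)=(\hat X_1,\hat Y_1)\circ S_1$ is a composition of bijections, bijectivity of $(X_1,Y_1)$ follows. The $i=0$ case proceeds identically but is simpler: $S_0$ is already affine, there is no diffeomorphism $s$ to worry about, the Cauchy--Riemann equations come from \eqref{eqn:largeampstream2:lowerCR1}--\eqref{eqn:largeampstream2:lowerCR2}, and the bottom-wall boundary condition \eqref{eqn:largeampstream2:Ybot} plays the role of \eqref{eqn:largeampstream2:Ytop}.
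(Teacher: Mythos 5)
Your proposal is correct and follows essentially the same route as the paper's proof: injectivity of $S_1$ from $s_x+1>0$, Cauchy--Riemann for $(\hat X_1,\hat Y_1)$ via \eqref{eqn:largeampstream2:upperCR1}--\eqref{eqn:largeampstream2:upperCR2}, the same chain-rule identity to get non-vanishing gradients, and the Darboux--Picard theorem applied after checking boundary injectivity edge by edge with the same use of parity, periodicity, \eqref{eqn:largeampstream2:Ytop}, \eqref{eqn:defU:wallIntersect}, and \eqref{eqn:defU:selfIntersect}. The only (harmless) difference is that you spell out the bijectivity of $x\mapsto x+s(x,y)$ slightly more explicitly than the paper does.
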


\subsection{An abstract global bifurcation theorem}

In order to find large amplitude solutions, we use a theorem of global analytic bifurcation due to Buffoni and Toland \cite{Buffoni--Toland:Book}, based on work by Dancer \cite{DancerE.N.1973BTfA}. 
The proof of this abstract theorem combines results on real-analytic varieties with the implicit function theorem, both directly and in the form of Lyapunov--Schmidt reductions. 
This allows us to construct a curve of solutions which admits a local $\R$-analytic parametrisation. 
We follow this curve, using the theory of real-analytic varieties to move through any self-intersections, until the curve terminates, either by forming a closed loop, or through a combination of approaching $\partial \mathcal{U}$ and infinity.

\begin{lem}
    The set $\mathcal{U}$ is open, and $\mathcal{F} \colon \mathcal{U} \to \mathcal{Y}$ is $\R$-analytic.
\begin{proof}
    We first show $\mathcal{U}$ is open. The first five conditions \eqref{eqn:defU:h}--\eqref{eqn:defU:wallIntersect} clearly give rise to an open set, but the last one \eqref{eqn:defU:selfIntersect} is a little more subtle.
    The map
    \begin{align*}
        L \colon &\mathcal{X} \times \R \to C^0([0,4\pi]^2 )\\
        &(\zeta,q) \mapsto \sigma
    \end{align*}
    is bounded and linear, thus continuous. The set of non-vanishing functions is open in $C^0([0,4\pi]^2 )$, so its pre-image under $L$ is open also. Thus $\mathcal{U}$ is the intersection of six open sets, and therefore open.

    We now sketch a proof that $\mathcal{F} \colon \mathcal{U} \to \mathcal{Y}$ is $\R$-analytic. 
    It is straightforward to check that the image $\mathcal{F}(\mathcal{X} \times \R)$ is indeed contained in $\mathcal{Y}$.
    Analyticity then follows from the fact that $\mathcal{F}$ is a composition of polynomials and bounded linear maps, namely differentiation, integration, and the trace map.
\end{proof}
\end{lem}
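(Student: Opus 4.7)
The plan is to handle openness and analyticity separately, since the claims are logically independent. For openness, I would write $\mathcal{U}=\bigcap_{j=1}^{6}\mathcal{U}_j$, where $\mathcal{U}_j$ is the set cut out by the $j$th strict inequality in \eqref{eqn:defU:h}--\eqref{eqn:defU:selfIntersect}, and show each factor is open in $\mathcal{X}\times\R$. The first five conditions amount to strict pointwise inequalities either on the real parameters $h_0,h_1$ (trivially open) or on a continuous function of some component of $\zeta^\circ$ and its first derivatives evaluated on the compact set $\overline{\mathcal{D}}$ (or the compact boundary component $\overline{\T}\times\{1\}$). Since the map from $\mathcal{X}$ into the relevant $C^{0}$ space of the needed derivatives is bounded linear, and a strict pointwise inequality on a compact set is preserved under small $C^0$ perturbations of the function, each of $\mathcal{U}_1,\dots,\mathcal{U}_5$ is open.

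The one case that needs care is $\mathcal{U}_6$, because $\sigma(a,b)$ in \eqref{eqn:defineSigma} is defined by a difference quotient that a priori degenerates on the diagonal $a=b$. The plan is to check that the map
\begin{equation*}
L\colon \mathcal{X}\times\R \longrightarrow C^{0}([0,4\pi]^2,\R^2),\qquad (\zeta,q)\longmapsto \sigma,
\end{equation*}
is well-defined and continuous (in fact affine, since $\sigma$ depends linearly on $\zeta^\circ$ through the values of $X_0,Y_0$ on the top boundary). Well-definedness on the diagonal follows from the $C^{2,\gamma}$ regularity of $\chi_0,\eta_0$ and the extension $\sigma(a,a)=(X_{0x}(a,1),Y_{0x}(a,1))$, which matches the one-sided limits of the quotient via the mean value theorem; continuity of $L$ into $C^0$ is then a uniform estimate that reduces to bounding first derivatives of $\chi_0$ and $\eta_0$ in $C^0(\overline{\T}\times\{1\})$. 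Once $L$ is continuous, $\mathcal{U}_6=L^{-1}\{\,f\in C^0([0,4\pi]^2,\R^2):f\text{ never vanishes}\,\}$ is the preimage of an open set and is therefore open.

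For analyticity of $\mathcal{F}$, I would observe that every right-hand side in \eqref{eqn:largeampstream2:lowerdivfree}--\eqref{eqn:largeampstream2:flux} is a polynomial in the components of $\zeta^\circ$, their first-order derivatives, the parameters $h_0,h_1,q$, and fixed constants ($k$, $\omega_0$, $\omega_1$, $H$), composed with either the identity (for the PDE components), a trace onto $\T\times\{0\}$ or $\T\times\{1\}$ (for the boundary components), or integration over $\mathcal{D}$ or $\overline{\T}$ (for the scalar constraints). Each of these operations is real-analytic between the relevant Banach spaces: first-order differentiation is bounded linear from $C^{2,\gamma}(\overline{\mathcal{D}})$ into $C^{1,\gamma}(\overline{\mathcal{D}})$, multiplication is continuous bilinear on $C^{1,\gamma}$ and $C^{0,\gamma}$, the trace map is bounded linear into the appropriate Hölder space on $\overline{\T}$, and integration is a bounded linear functional. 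A finite composition of real-analytic maps between Banach spaces is again real-analytic, so $\mathcal{F}\colon \mathcal{X}\times\R \to \mathcal{Y}$ is real-analytic on the whole space, and in particular on $\mathcal{U}$.

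The only genuinely nontrivial step is the continuity of the $\sigma$-map into $C^0([0,4\pi]^2)$; the rest is essentially bookkeeping about which derivative of which unknown appears where. I would therefore allocate most of the writing to making the extension of the difference quotient in \eqref{eqn:defineSigma} across the diagonal rigorous, and merely sketch the openness of $\mathcal{U}_1,\dots,\mathcal{U}_5$ and the analyticity argument.
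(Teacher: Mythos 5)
Your proposal follows essentially the same route as the paper's proof: decompose $\mathcal{U}$ as the intersection of six sets, note the first five are clearly open, treat \eqref{eqn:defU:selfIntersect} via continuity of the map $(\zeta,q)\mapsto\sigma$ into $C^0([0,4\pi]^2)$, and derive analyticity from $\mathcal{F}$ being built from polynomials and bounded linear operations (differentiation, trace, integration). The one small refinement you make is correctly observing that $\sigma$ depends affinely (not linearly, because of the $x/k$ term in $X_0$) on $\zeta^\circ$, and being explicit about why the difference quotient in \eqref{eqn:defineSigma} extends continuously across the diagonal; the paper glosses over both points, so your version is slightly more careful, but the argument is the same.
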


We now state a minor adaptation of a theorem due to Buffoni and Toland, \cite[Theorem~9.1.1]{Buffoni--Toland:Book}.

\begin{thm}\label{thm:buffoniToland}
    Let $\mathcal{X}$, $\mathcal{Y}$ be Banach spaces over $\R$, let $\mathcal{U}$ be open in $\mathcal{X} \times \R$, and let $\mathcal{F} \colon \mathcal{U} \to \mathcal{Y}$ be $\R$-analytic. Suppose
    \begin{enumerate}[label=\rm(\roman*)]
        \item \label{largeamphypothesis1} There exists a trivial solution $\zeta_*$, and a non-empty, open interval $I \subseteq \R$ such that for all $q \in I$, we have $(\zeta_*, q) \in \mathcal{U}$ and $\mathcal{F}(\zeta_*,q)=0$.
        \item \label{largeamphypothesis2} For all $(\zeta,q) \in \mathcal{U}$ with $\mathcal{F}(\zeta,q)=0$, the operator $D_\zeta \mathcal{F}(\zeta,q)$ is Fredholm, and $D_\zeta \mathcal{F}(\zeta_*,q)$ is Fredholm of index 0 for some $q \in I$.
        \item \label{largeamphypothesis3} There exist $q_* \in I$ and $\dot{\rho}_* \neq 0$ such that
        \begin{subequations}\label{eq:CrandallRabinowitz}
\begin{align}
    \label{eq:CrandallRabinowitz:1Dkernel}
    \ker(D_\zeta \mathcal{F}(\zeta_*,q_*))&= \Span( \dot{\rho}_*)\\
    \label{eq:CrandallRabinowitz:transversality}
    D_{q}D_{\zeta} \mathcal{F}(\zeta_*,q_*)\dot{\rho}_* &\notin \Range( D_\zeta \mathcal{F}(\zeta_*,q_*)).
\end{align}
\end{subequations}
\end{enumerate}
Then there exists a continuous function $(Z,Q) \colon (-\infty, \infty) \to \mathcal{X} \times \R$ such that $Z(0)=\zeta_*$, $Z'(0)=\dot \rho$, $Q(0)=q_*$, and $\mathcal{F}(Z(\tau),Q(\tau))=0$ for all $\tau \in \R$. The function $(Z,Q)$ has a local $\R$-analytic reparametrisation around any $\tau$. Furthermore, one of the following occurs.
\begin{enumerate}[label=\rm(\alph*)]
    \item \label{alternative:blowup} For any compact set $K \subset \mathcal{U}$, there exists $T>0$ such that for all $\tau>T$, $(Z(\tau),Q(\tau)) \notin K$.
    \item \label{alternative:loop} $(Z,Q)$ forms a closed loop. In other words, there exists $T>0$ such that for all $\tau$ we have $(Z(\tau+T),Q(\tau+T))=(Z(\tau),Q(\tau))$.
\end{enumerate}
\end{thm}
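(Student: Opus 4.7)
The plan is to follow the proof of Buffoni and Toland's analytic global bifurcation theorem \cite[Theorem~9.1.1]{Buffoni--Toland:Book} essentially verbatim, since the statement above is a minor cosmetic variant (a two-sided analytic curve emanating from a bifurcation point lying on an interval of trivial solutions rather than a half-line of them). First I would invoke the analytic Crandall--Rabinowitz theorem \cite{cr:simple} at $(\zeta_*,q_*)$. By hypothesis~\ref{largeamphypothesis1} the entire interval $\{(\zeta_*,q):q\in I\}$ lies in $\mathcal{F}^{-1}(0)\cap \mathcal{U}$, so the continuous path $q\mapsto D_\zeta\mathcal{F}(\zeta_*,q)$ consists of Fredholm operators by hypothesis~\ref{largeamphypothesis2}; homotopy invariance of the Fredholm index on this connected set, combined with the index-$0$ assumption at some point of $I$, forces $D_\zeta\mathcal{F}(\zeta_*,q_*)$ to be Fredholm of index $0$. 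Hypothesis~\ref{largeamphypothesis3} then supplies the one-dimensional kernel and transversality, and together with the analyticity of $\mathcal{F}$ this yields a local analytic curve $(Z_{\mathrm{loc}},Q_{\mathrm{loc}})$ of non-trivial solutions through $(\zeta_*,q_*)$ with $Z_{\mathrm{loc}}'(0)=\dot\rho_*$ and $Q_{\mathrm{loc}}(0)=q_*$.

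Next I would extend this local curve globally by exploiting the fact that, at each of its points, the zero set of $\mathcal{F}$ has the structure of a one-dimensional real-analytic variety. At points where $D_\zeta\mathcal{F}$ is invertible, the analytic implicit function theorem gives a unique smooth continuation. At singular points where the kernel becomes non-trivial, a Lyapunov--Schmidt reduction, justified because the Fredholm index remains $0$ along the connected component of the zero set containing the curve, converts the problem to a finite-dimensional analytic equation whose zero set near the singular point decomposes into finitely many analytic arcs meeting there. Selecting an appropriate arc (for instance via Weierstrass preparation) allows the continuation to proceed, and gluing the pieces yields the global curve $(Z,Q)\colon\R\to \mathcal{X}\times\R$ with a local analytic reparametrization around each $\tau$.

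Finally, the dichotomy is standard. Either $(Z(\tau),Q(\tau))$ eventually leaves every compact subset $K\subset \mathcal{U}$, giving alternative~\ref{alternative:blowup}, or else its image is contained in some compact set for all $\tau$, in which case the finite branching at singular points combined with analyticity and a compactness argument force the curve to close into a loop, giving alternative~\ref{alternative:loop}. The main technical obstacle I anticipate is the bookkeeping at singular points: one must verify that the branch-selection rule produces a globally well-defined curve that neither terminates prematurely nor fails to respect the two-sided parametrization $\tau\in\R$, and that cleanly yields the closed-loop alternative when no blow-up occurs. All of these issues are handled by the machinery developed in \cite[Chapter~9]{Buffoni--Toland:Book}, so my proof would essentially reduce to verifying that their hypotheses are met and citing the resulting conclusion.
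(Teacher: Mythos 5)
Your outline correctly identifies the overall strategy (local Crandall--Rabinowitz plus analytic continuation through distinguished arcs) and the index argument---transferring Fredholm index~$0$ from one $q\in I$ to $q_*$, and later to limit points along the curve, via local constancy of the index---but it misses the one genuinely non-cosmetic modification of Buffoni--Toland that the paper's appendix is largely devoted to: the case where the local bifurcating branch $(Z_{\mathrm{loc}},Q_{\mathrm{loc}})$ has $Q_{\mathrm{loc}}'\equiv 0$ on a neighbourhood of $0$. The paper explicitly allows this (following \cite{csv:global}), and it is not covered by a verbatim citation of \cite[Theorem~9.1.1]{Buffoni--Toland:Book}. The problem is that the distinguished-arc machinery requires the local branch to enter the open set where $D_\zeta\mathcal{F}$ is invertible, so that solutions are locally a graph $\zeta=z(q)$ over $q$ by the implicit function theorem. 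If $Q_{\mathrm{loc}}'\equiv 0$ near $\tau=0$, the bifurcating branch is locally contained in the slice $\{q=q_*\}$ while $Z_{\mathrm{loc}}$ varies, which is incompatible with any such graph; hence $D_\zeta\mathcal{F}$ is singular along the local branch near the bifurcation point, there may be no distinguished arc meeting the non-trivial local branch at all, and the continuation scheme cannot begin.

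The paper's remedy is to pick a bounded linear functional $l$ on $\mathcal{X}$ (with $l(\dot\rho_*)=1$ if $Q_{\mathrm{loc}}'(0)=0$, and $l\equiv 0$ otherwise) and replace $\mathcal{F}$ by the sheared map $\mathcal{G}(\zeta,q)=\mathcal{F}(\zeta,q+l(\zeta-\zeta_*))$ on the correspondingly transformed open set $\tilde{\mathcal{U}}$. Since $D_q\mathcal{F}(\zeta_*,q)=0$ along the trivial branch, $\mathcal{G}$ satisfies hypotheses~(i)--(iii) with the same $\zeta_*$, $q_*$, and $\dot\rho_*$, but the reparametrized local branch $(\tilde Z_{\mathrm{loc}},\tilde Q_{\mathrm{loc}})$ now has $\tilde Q_{\mathrm{loc}}'(0)\neq 0$, so it does enter a distinguished arc of $\mathcal{G}$, the Buffoni--Toland continuation applies (here the paper invokes \cite[Proposition~8.3.4(b)]{Buffoni--Toland:Book}), and the resulting global curve is transformed back into one for $\mathcal{F}$. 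Without this device your proposal would break down in exactly the degenerate case the theorem is formulated to permit.
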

\begin{rk}
This differs superficially from \cite[Theorem~9.1.1]{Buffoni--Toland:Book} in that we do not assume the interval $I$ is the whole real line, or that the trivial solution $\zeta_*=0$.
Additionally, hypothesis~\ref{largeamphypothesis2} replaces the stronger assumption that $D_\zeta \mathcal{F}(\zeta,q)$ is Fredholm of index 0 at all solutions, as in \cite{haziotWheeler}.
We also allow $Q'\equiv 0$ on a neighbourhood of 0, as in \cite{csv:global}.
Finally, our statement of \ref{alternative:blowup} gives a blow-up condition phrased in terms of compact sets rather than the norm and distance to $\partial \mathcal{U}$ of $(Z,Q)$. 
In this sense, loss of compactness for the bifurcation curve appears as an alternative rather than as a hypothesis, in the spirit of \cite{ChenWalshWheeler:StratifiedSolitaryWaves}.
\end{rk}
\begin{rk}
Recall the definition of $\zeta_*$ from \eqref{eqn:defZetaStar}. Letting $I = (-\frac 12 \omega_0 H, \infty)$, we see that $(\zeta_*,q) \in \mathcal{U}$ for all $q \in I$, and $\mathcal{F}(\zeta_*,q)=0$.
Thus, hypothesis~\ref{largeamphypothesis1} is satisfied.
\end{rk}

For completeness, we outline a proof of Theorem~\ref{thm:buffoniToland} in Appendix~\ref{sec:proofOfGlobBif}.

\section{Fredholm Properties}\label{sec:fredholmProperties}

The key functional analytic ingredient of the general bifurcation  Theorem~\ref{thm:buffoniToland} is that the linearised operators are Fredholm as maps between the relevant Banach spaces. In this section we relate this to the theory of elliptic systems  and hence verify hypothesis~\ref{largeamphypothesis2} of Theorem~\ref{thm:buffoniToland}.
Subsection~\ref{sec:lopatinskii} is dedicated to proving that $D_{\zeta^\circ}\mathcal{F}^\circ \colon \mathcal{X}^\circ_\np \to \mathcal{Y}^\circ_\np$
is an elliptic operator, while Subsection~\ref{sec:Fredind)} deals with its Fredholm properties.  
Lemma~\ref{lem:FredholmAndParity} then shows that ellipticity implies $D_{\zeta^\circ} \mathcal{F}^\circ \colon \mathcal{X}^\circ \to \mathcal{Y}^\circ$ is Fredholm.
Its index is found to be 0, and in Lemma~\ref{lem:finiteExtensionOfFredholm} we show that the same must therefore be true for $D_\zeta \mathcal{F} \colon \mathcal{X} \to \mathcal{Y}$.

Having linearised in $\zeta$ around some solution $(\zeta,q)$ of \eqref{eqn:largeampstream2}, we call our linearised variables $\dot{u_0},\ldots,\dot{h}_1$. Let $\dot{\zeta}= (\dot{u_0}, \ldots, \dot{h}_1)$, and let $\dot{\zeta}^\circ=(\dot{u_0}, \ldots, \dot{s})$.
We show that $D_{\zeta^\circ}\mathcal{F}^\circ \colon \mathcal{X}^\circ_\np \to \mathcal{Y}^\circ_\np$ is Fredholm by showing that it is elliptic.
The theory of elliptic systems is more complex than that for scalar equations, and many of the most useful results from scalar theory, such as the maximum principle, do not have analogues for systems.
Nevertheless, we still have access to Schauder estimates for elliptic systems due to Agmon, Douglis and Nirenberg \cite{ADN:prequel,ADN}.
We verify the hypotheses of these results following the presentation in \cite{Volpert:Book}; another good reference is \cite{wrl:elliptic}.
Throughout this section, lengthy calculations such as factoring the determinants of large matrices and calculating the residues of complicated rational functions are done with the aid of computer algebra.

\subsection{Ellipticity and the Lopatinskii condition}\label{sec:lopatinskii}

Ellipticity requires various hypotheses to be satisfied, as laid out for instance in \cite[Chapter 1, Section 2.2.1]{Volpert:Book}, which we verify here.
Given a differential operator such as $D_{\zeta^\circ} \mathcal{F}^\circ(\zeta,q)$ acting on a vector valued function $ \dot{\zeta}^\circ$, we first choose \emph{Douglis--Nirenberg numbers}, or  \emph{DN numbers}, associated with the regularity of components of $\dot{\zeta}^\circ$, and of  $D_{\zeta^\circ} \mathcal{F}^\circ(\zeta,q) \dot{\zeta}^\circ$.
These define a \emph{principal symbol} of the operator.
Roughly speaking, this is a way of finding the ``leading order" terms of  $D_{\zeta^\circ} \mathcal{F}^\circ(\zeta,q) $, and replacing the derivatives with Fourier variables.
Note, finding these ``leading order" terms is more subtle than only keeping the terms of highest order.
We expand on this later.
The operator is shown to be \emph{elliptic in the Douglis--Nirenberg sense}, meaning that if the Fourier variables are real valued and not all 0, the determinant of the principal symbol is non-zero.
In order to show \emph{proper ellipticity}, we show that if a particular Fourier variable is allowed to take on complex values, then the determinant of the principal symbol has the same number of roots in the upper and lower half-planes.
Finally, the \emph{Lopatinskii condition} for the boundary conditions is verified.
This requires a certain contour integral taken around half of these roots to be bounded away from 0.

\begin{prop}\label{prop:Lopatinskii}
    Let $(\zeta,q) \in \mathcal{U}$ solve \eqref{eqn:largeampstream2}.
    Then $D_{\zeta^\circ}\mathcal{F}^\circ(\zeta,q) \colon \mathcal{X}^\circ_\np \to \mathcal{Y}^\circ_\np$ is an \emph{L-elliptic} operator, in the sense of \cite[Definition~9.30]{wrl:elliptic}.
\begin{rk}
    In \cite{Volpert:Book}, L-ellipticity is known as \emph{ellipticity in the Agmon--Douglis--Nirenberg sense}.
\end{rk}
\begin{proof}[Proof of Proposition~\ref{prop:Lopatinskii}]
We give a matrix formula for $D_{\zeta^\circ} \mathcal{F}^\circ(\zeta,q) \dot{\zeta}^\circ$, that is, we find matrices of differential operators $A$, $B_0$, and $B_1$ such that $D_{\zeta^\circ} \mathcal{F}^\circ(\zeta,q) \dot{\zeta}^\circ = (A,B_0,B_1)\dot{\zeta}^\circ$.
The matrix $A$ corresponds to  the equations in the bulk \eqref{eqn:largeampstream2:lowerdivfree}--\eqref{eqn:largeampstream2:lap}, $B_0$ to the boundary conditions at $y=0$ \eqref{eqn:largeampstream2:kinbot}--\eqref{eqn:largeampstream2:sTop}, and $B_1$ to the boundary conditions at the interface \eqref{eqn:largeampstream2:ucts}--\eqref{eqn:largeampstream2:kinint}.
Let
\[A(x,y,\partial_x,\partial_y) = \begin{pmatrix}
M_0 & & 0\\
0 & & M_1
\end{pmatrix},
\]
where
\begin{align*}
    M_0 &= \begin{pmatrix}
 h_0 \partial_x & &\partial_y & &  0  & & -\omega_0 h_0 \partial_x
\\
 \partial_y & &-h_0 \partial_x & &  0 & & -\omega_0 \partial_y
\\
 0 & & 0 & & k h_0 \partial_x  & & -\partial_y
\\
 0 & & 0 & & \partial_y  & & h_0 \partial_x
\end{pmatrix}
\\
M_1&=\begin{pmatrix}
\eta_{1y} & -\chi_{1y} & v_{1y} & -u_{1y} & 0
\\
 -\chi_{1y} & -\eta_{1y} & -{\omega_1} \eta_{1y}+u_{1y} & {\omega_1} \chi_{1y}+v_{1y} & 0
\\
 0 & 0 & {h_1} & -s_y & \eta_{1y}
\\
 0 & 0 & s_y & {h_1} & -\chi_{1y}
\\
 0 & 0 & 0 & 0 & k^{-1} \partial_x
\end{pmatrix} k \partial_x\\
&\quad+\begin{pmatrix}
-k \eta_{1x} & k \chi_{1x}+1 & -k v_{1x} & k u_{1x} & 0
\\
 k \chi_{1x}+1 & k \eta_{1x} & k \left({\omega_1} \eta_{1x}-u_{1x}\right) & -k {\omega_1} \chi_{1x}-k v_{1x}-{\omega_1} & 0
\\
 0 & 0 & 0 & k \left(1+s_x\right) & -k \eta_{1x}
\\
 0 & 0 & -k \left(1+s_x\right) & 0 & k \chi_{1x}+1
\\
 0 & 0 & 0 & 0 & \partial_y
\end{pmatrix}\partial_y.
\end{align*}
Let
\[
B_0(x,\partial_x,\partial_y)=\begin{pmatrix}
0 & 1 & 0 & 0 & 0 & 0 & 0 & 0 & 0 \\
0 & 0 & 0 & 1 & 0 & 0 & 0 & 0 & 0 \\
0 & 0 & 0 & 0 & 0 & 1 & 0 & 0 & 0 \\
0 & 0 & 0 & 0 & 0 & 0 & 0 & 1 & 0 \\
0 & 0 & 0 & 0 & 0 & 0 & 0 & 0 & 1 \\
\end{pmatrix} ,
\]
 and
\[
B_1(x,\partial_x,\partial_y)=\left(\begin{matrix}
1 & 0 & 0 & 0 & -1 & 0 & 0 & 0 & 0 \\
0 & 1 & 0 & 0 & 0 & -1 & 0 & 0 & 0 \\
0 & 0 & 1 & 0 & 0 & 0 & -1 & 0 & 0 \\
0 & 0 & 0 & 1 & 0 & 0 & 0 & -1 & 0 \\
-\eta_{0x} & k^{-1} + \chi_{0x} & v_0 \partial_x & -(u_0+q) \partial_x & 0 & 0 & 0 & 0 & 0 \\
\end{matrix}\right),  \]
where the functions in the matrix $A$ are evaluated at $(x,y)$, and those in 
$B_1$ are evaluated at $(x,1)$.

As DN numbers we take
\begin{alignat*}{2}
    (s_1, \ldots, s_9)&=(-1,\ldots, -1,0)\\
    (t_1, \ldots, t_9)&=(2,\ldots, 2)\\
    (\sigma_1, \ldots, \sigma_5)&=(-2,\ldots,-2, -2)  & \qquad & \text{ at } y=0\\
    (\sigma_1, \ldots, \sigma_5)&=(-2,\ldots, -2, -1) & \qquad & \text{ at } y=1.
\end{alignat*}
This choice of DN numbers motivated our definition of $\mathcal{X}^\circ_\np$ and $\mathcal{Y}^\circ_\np$, with the $t_i$ corresponding the regularity of $\mathcal{X}^\circ_\np$, and the $s_i$ and $\sigma_i$ corresponding to the regularity of $\mathcal{Y}^\circ_\np$.

The order of the operator, given by $\sum (s_i + t_i)$, is 10. 
This is twice the number of boundary conditions, as required. 
Note, although it might appear there are ten boundary conditions, we in fact only have five, as each of the equations \eqref{eqn:largeampstream2:kinbot}--\eqref{eqn:largeampstream2:kinint} only covers $y=0$ or $y=1$, i.e., one of the two boundary components.
We define matrices $\hat{A}$, $\hat{B}_0$, $\hat{B}_1$, giving the principal symbols of $A$, $B_0$, and $B_1$.
If $A_{ij}$ is written in the form \[A_{ij}(x,y,\partial_x,\partial_y)=\sum_{|\alpha| \leq s_i+t_j} a^{ij}_\alpha(x,y) \partial^\alpha,\] 
where $\alpha$ is a multi-index, then let
\[ \hat{A}_{ij}(x,y,\xi,\nu)=\sum_{|\alpha| = s_i+t_j} a^{ij}_\alpha(x,y) (\xi,\nu)^\alpha.\]
Similarly, we define $\hat{B}_0$ and $\hat{B}_1$.
With our choice of DN numbers, no terms of $A$ or $B_0$ are eliminated, while $B_1$ loses only its $(5,1)$ and $(5,2)$ elements.

We next take the determinant of $\hat A$. This can be simplified by noting that we did not linearise around an arbitrary point in function space, but at a solution to \eqref{eqn:largeampstream2}. Thus \eqref{eqn:largeampstream2:lowerdivfree}--\eqref{eqn:largeampstream2:upperCR2} are used to eliminate various $y$ derivatives, and ultimately deduce
\begin{subequations}
\begin{equation}
\label{eqn:bigDet}
    \det(\hat A)=\frac{k^{3} (\nu^2+\xi^2) (\nu^2+h_0^2 \xi^2)^2  (h_1^2 \xi^2+(\nu (s_x+1)-\xi s_y)^2)^2  ((k  \chi_{1x}+1)^2+k^2 \eta_{1x}^2)}{(1+s_x)^2}.
\end{equation}

For $(\xi,\nu) \in \R^2 \setminus \{ 0 \}$, this determinant is strictly positive. Most of the factors in this determinant are straightforward to bound below, but $h_1^2 \xi^2+(\nu (s_x+1)-\xi s_y)^2$ is a little harder.
Note that \[\det(\hat{A}(x,y,\lambda \xi, \lambda \nu)) = \lambda^{10} \det(\hat{A}(x,y, \xi, \nu) ),\]
so it suffices to consider $\xi=\cos \theta$, $\nu=\sin \theta$ with $\theta \in \R$.
This substitution yields
\begin{align*}
    & h_1^2 \xi^2+((s_x+1)\nu -s_y\xi )^2 \\
    &\qquad =h_1^2 \cos^2 \theta+((s_x+1)\sin \theta -s_y\cos \theta )^2\\
    &\qquad =\frac{h_1^2+(s_x+1)^2+s_y^2}{2} - (s_x+1) s_y \sin 2 \theta+\frac{h_1^2-(s_x+1)^2+s_y^2}{2} \cos 2 \theta\\
    &\qquad \geq \frac 12 \left(h_1^2+(s_x+1)^2+s_y^2 - \sqrt{(h_1^2-(s_x+1)^2+s_y^2)^2+ 4(s_x+1)^2 s_y^2 } \right)\\
    &\qquad = \frac{2 h_1^2(s_x+1)^2}{h_1^2+(s_x+1)^2+s_y^2 +\sqrt{(h_1^2-(s_x+1)^2+s_y^2)^2+ 4(s_x+1)^2 s_y^2 }  }.
\end{align*}
Thus, $\det(\hat{A})$ is bounded below by
\begin{equation}
\label{eqn:bigDetLowerBound}
    \frac{4 k^3 \min(1,h_0^4)  h_1^4(s_x+1)^2 ((k \chi_{1x} + 1)^2 + k^2 \eta_{1x}^2)}{\left(h_1^2+(s_x+1)^2+s_y^2 +\sqrt{(h_1^2-(s_x+1)^2+s_y^2)^2+ 4(s_x+1)^2 s_y^2 }\right)^2}(\xi^2+\nu^2)^5.
\end{equation}
\end{subequations}
Therefore, $A$ is elliptic in the sense of Douglis--Nirenberg.

If we allow $\nu$ to take values in $\C$, then $\det(\hat A)= 0$ for
\[ \nu \in \left\{ i \xi ,-i \xi ,i \xi  h_0,-i \xi  h_0,\frac{\xi  s_y}{1+s_x}+\frac{i h_1 \xi}{1+s_x},\frac{\xi  s_y}{1+s_x}-\frac{i h_1 \xi}{1+s_x}\right\}. \]
The roots $i \xi$ and $-i\xi$ are simple, while the rest have multiplicity 2.
None of these are real, and there are the same number of roots in the upper and lower half planes.
Therefore we have proper ellipticity.

Finally consider the Lopatinskii condition. Let
\begin{align*}
    \Lambda_0(x,\xi)&= \frac{1}{2\pi i}\int_C  \hat{B}_0 (x,\xi,\nu) \hat{A}(x,0,\xi,\nu)^{-1} \ d \nu\\
    \Lambda_1(x,\xi)&= \frac{1}{2\pi i}\int_C  \hat{B}_1 (x,\xi,\nu) \hat{A}(x,1,\xi,\nu)^{-1} \ d \nu ,
\end{align*}
where $C$ is a simple closed contour in the upper half plane, enclosing the zeros of $\det(\hat A)$ with positive imaginary part, that is
\begin{align}\label{eqn:zerosInUpperHalfPlane}
    \left\{ i \xi ,i \xi  h_0,\frac{\xi  s_y}{1+s_x}+\frac{i h_1 \xi}{1+s_x}\right\},
\end{align}
where we have assumed $\xi>0$ without loss of generality.
In \cite{Volpert:Book} there is an extra term in the integrands, but for our purposes it is unnecessary.
These integrals are calculated using the residue theorem and computer algebra.
When performing this calculation, it is important to use the fact we linearised around a solution, as this affects the order and residues of the poles found at points in \eqref{eqn:zerosInUpperHalfPlane}.
We now check that the Lopatinskii constants $ e^i_{\partial \mathcal{D}}$ are non-zero.
These are defined as
\[ e^i_{\partial \mathcal{D}} = \inf_{x \in \T, \ |\xi|=1} \sum_\alpha |\mu_\alpha^i(x,\xi)|,  \]
where the $\mu^i_\alpha$ are the $5 \times 5$ minors of $\Lambda_i$ and $\alpha$ is an indexing variable.

For each of the $\Lambda_i$, consider the minor consisting of the odd numbered columns. We call this minor $\mu^i_{\alpha^*}$ and calculate that
\begin{align*}
    \det(\mu^0_{\alpha^*}(x,\xi))&=\frac{1}{32 k^{2} \left|1+k \chi_{1x}(x,0)\right||\xi|}\\
    \det( \mu^1_{\alpha^*}(x,\xi))&= \frac{\sqrt{(u_0(x,1)+q)^2+v_0(x,1)^2}}{16 \left(1+s_x(x,1)\right)^{2} k^{3}}  ,
\end{align*}
which, due to the definition of $\mathcal{U}$, give bounds away from 0 for $e^0_{\partial \mathcal{D}}$ and $e^1_{\partial \mathcal{D}}$ which are uniform in $x$.
\end{proof}
\end{prop}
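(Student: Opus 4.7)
The plan is to follow the standard verification scheme for L-ellipticity laid out in Volpert~\cite{Volpert:Book}: identify Douglis--Nirenberg weights, compute the principal symbol, verify proper ellipticity by locating the roots of its determinant in $\nu$, and finally check the Lopatinskii/complementing condition at each boundary component by means of a residue calculation. First, I would write $D_{\zeta^\circ}\mathcal{F}^\circ(\zeta,q)\dot\zeta^\circ$ as a matrix-valued differential operator $(A,B_0,B_1)$, with $A$ a $9\times 9$ block-diagonal system on $\mathcal D$ (the lower-layer block is constant-coefficient and first order, the upper-layer block is variable-coefficient and first order, plus a Laplacian for $\dot s$), and the $5\times 9$ matrices $B_0,B_1$ recording the linearisations of the rigid-wall and interface conditions respectively. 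The natural DN numbers are dictated by the definitions of $\mathcal X^\circ_{\np}$ and $\mathcal Y^\circ_{\np}$: namely $t_j=2$ for every unknown, $s_i=-1$ for the eight first-order bulk equations and $s_9=0$ for the Laplacian, and boundary weights $\sigma_k=-2$ except for the trace of $\dot s$ on the interface where $\sigma_5=-1$. Then $\sum(s_i+t_j)=10$ which is twice the number of boundary conditions, as required for a well-posed elliptic system.

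Second, I would compute $\det\hat A(x,y,\xi,\nu)$. This is a large symbolic determinant, but it simplifies considerably because we are linearising at an actual solution of \eqref{eqn:largeampstream2}: the Cauchy--Riemann equations \eqref{eqn:largeampstream2:lowerCR1}--\eqref{eqn:largeampstream2:upperCR2} let us rewrite all $y$-derivatives of $\chi_i,\eta_i,s$ in terms of $x$-derivatives, producing the clean factorisation \eqref{eqn:bigDet}. From this factorisation, Douglis--Nirenberg ellipticity reduces to showing each factor is bounded away from zero for $(\xi,\nu)\in\R^2\setminus\{0\}$; the conditions defining $\mathcal U$ provide exactly the required non-degeneracy for the factors $(s_x+1)^{-2}$, $((k\chi_{1x}+1)^2+k^2\eta_{1x}^2)$ and $(\nu^2+h_0^2\xi^2)^2$, while the single non-obvious factor $h_1^2\xi^2+(\nu(s_x+1)-\xi s_y)^2$ needs a quantitative homogeneous lower bound; I would homogenise by setting $(\xi,\nu)=(\cos\theta,\sin\theta)$ and minimise the resulting trigonometric polynomial explicitly. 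For proper ellipticity one simply locates the six roots in $\nu$ for fixed real $\xi>0$, observes they come as complex-conjugate pairs, and counts three in each half-plane.

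Third, and most laboriously, I would verify the Lopatinskii condition by computing the $5\times 9$ matrices
\[
\Lambda_i(x,\xi)=\frac{1}{2\pi i}\oint_C \hat B_i(x,\xi,\nu)\,\hat A(x,i,\xi,\nu)^{-1}\,d\nu
\]
as sums of residues at the three upper-half-plane roots listed in \eqref{eqn:zerosInUpperHalfPlane}. Since $\hat A^{-1}$ is a cofactor matrix divided by $\det\hat A$, the residues can be read off once the orders of the poles are determined; the interface root $\xi s_y/(1+s_x)+ih_1\xi/(1+s_x)$ has order two, so one must take a derivative of the adjugate before evaluating. The final step is to exhibit a single $5\times 5$ minor of each $\Lambda_i$ that is bounded away from zero uniformly in $x\in\T$ and $|\xi|=1$. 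Here the choice of minor is not cosmetic: selecting the minor formed by the odd-indexed columns isolates precisely the pivot structure of the Cauchy--Riemann equations and the kinematic interface condition, and I would expect the resulting determinants to collapse to expressions of the form (constant)$\cdot |\xi|^{-1}$ for $y=0$ and (constant)$\cdot\sqrt{(u_0+q)^2+v_0^2}$ for $y=1$, the latter being bounded below thanks to the non-stagnation condition \eqref{eqn:defU:interfaceStagnation} in the definition of $\mathcal U$.

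The principal obstacle is the sheer bulk of the symbolic computation. The adjugate of a $9\times 9$ symbol matrix with coefficient functions in seven unknowns is far too unwieldy by hand, so computer algebra is essential throughout; the key non-mechanical insight is the recognition that the system's geometry forces exactly one minor to detect the non-stagnation condition at the interface, which is the mechanism by which the geometric conditions defining $\mathcal U$ translate into the analytic Lopatinskii bound.
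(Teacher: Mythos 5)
Your proposal takes essentially the same route as the paper: the same matrix decomposition $(A,B_0,B_1)$, the same Douglis--Nirenberg weights, the same simplification of $\det\hat A$ via the Cauchy--Riemann equations, the same homogenisation $(\xi,\nu)=(\cos\theta,\sin\theta)$ for the quantitative lower bound, and the same residue computation with the same choice of minor (odd-indexed columns). One inaccuracy is worth flagging: you assign the exceptional weight $\sigma_5=-1$ to ``the trace of $\dot s$ on the interface,'' but the trace of $s$ is imposed only at $y=0$ (equation \eqref{eqn:largeampstream2:sTop}), where it is a zeroth-order condition and carries $\sigma_5=-2$; the exceptional $\sigma_5=-1$ in fact belongs to the linearised kinematic condition \eqref{eqn:largeampstream2:kinint} at $y=1$, which is first order in $\dot\chi_0,\dot\eta_0$. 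A second small slip: the Lopatinskii minor at $y=0$ is not literally a constant multiple of $|\xi|^{-1}$ but rather $\bigl(32k^2\lvert 1+k\chi_{1x}(x,0)\rvert\,\lvert\xi\rvert\bigr)^{-1}$, so its uniform lower bound also invokes the non-degeneracy condition \eqref{eqn:defU:gradX}, not only homogeneity in $\xi$. Neither affects the soundness of your strategy.
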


\subsection{Fredholm index 0}\label{sec:Fredind)}
By combining Proposition~\ref{prop:Lopatinskii},  \cite[Theorem~9.32]{wrl:elliptic}, and \cite[Theorem~8.3 in Chapter~5]{Volpert:Book} we have for any $(\zeta,q)$ solving \eqref{eqn:largeampstream2} and any $\gamma \in (0,1)$, that
\begin{align*}
    D_{\zeta^\circ}\mathcal{F}^\circ(\zeta,q) \colon \mathcal{X}^\circ_\np \to \mathcal{Y}^\circ_\np 
\end{align*}
is Fredholm, where we recall the definitions of $\mathcal{X}^\circ_\np $ and $ \mathcal{Y}^\circ_\np $ from \eqref{eqn:defineNonParitySpaces}. 
This result is why $\mathcal{X}^\circ_\np $ and $ \mathcal{Y}^\circ_\np $ were chosen to be these particular products of Hölder spaces. 
Observe that the domain has its regularity given by the $t_i$, and the codomain has its regularity given by $-s_i$ for the factors corresponding to the bulk, and $-\sigma_i$ for those corresponding to the boundary.

We now show that this operator remains Fredholm when restricted to $ \mathcal{X}^\circ $ and $ \mathcal{Y}^\circ $.

\begin{lem}\label{lem:FredholmAndParity}
    Recall $\mathcal{X}^\circ$ and $\mathcal{Y}^\circ$ from \eqref{eqn:defineXcirc} and the following discussion.
    For $(\zeta,q) \in\mathcal{U}$ solving \eqref{eqn:largeampstream2}, the operator $ D_{\zeta^\circ}\mathcal{F}^\circ(\zeta,q) \colon \mathcal{X}^\circ \to \mathcal{Y}^\circ$ is Fredholm.
\begin{proof}
    In order to distinguish between different realisations of  $ D_{\zeta^\circ}\mathcal{F}^\circ(\zeta,q)$, with their different domains and codomains, define $T$ and $L$ as
    \begin{align*}
        T&= D_{\zeta^\circ}\mathcal{F}^\circ(\zeta,q) \colon   \mathcal{X}^\circ_\np \to \mathcal{Y}^\circ_\np \\
        L &= D_{\zeta^\circ}\mathcal{F}^\circ(\zeta,q) \colon \mathcal{X}^\circ \to \mathcal{Y}^\circ.
    \end{align*}
     We know that $T$ is Fredholm, and are trying to show that $L$ is Fredholm. Since $\ker(L) \subseteq \ker(T)$, we see $\ker(L)$ must be finite dimensional.

     Now consider the codimension of the image.
     Let $\check{\mathcal{X}}^\circ$ be the subset of $ \mathcal{X}^\circ_\np$ with the opposite parities to $\mathcal{X}^\circ$, i.e., the 1st, 4th, 5th, and 8th components are odd in $x$, and the rest are even.
     Similarly, $\check{\mathcal{Y}}^\circ$ denotes the subset of $\mathcal{Y}^\circ_\np $ with the opposite parities to $\mathcal{Y}^\circ$. 
     We have that $\mathcal{X}^\circ_\np = \mathcal{X}^\circ \oplus \check{\mathcal{X}}^\circ$, and $\mathcal{Y}^\circ_\np = \mathcal{Y}^\circ \oplus \check{\mathcal{Y}}^\circ$.

    Let $P_{\mathcal{X}^\circ}$ be the obvious projection onto $\mathcal{X}^\circ$, i.e.,
     \[ P_{\mathcal{X}^\circ} \colon (\dot{u}_0(x,y),\ldots,\dot{s}(x,y)) \mapsto \tfrac 12 (\dot{u}_0(x,y)+\dot{u}_0(-x,y),\ldots,\dot{s}(x,y)-\dot{s}(-x,y)).\]
    Note that $P_{\check{\mathcal{X}}^\circ}=I-P_{\mathcal{X}^\circ}$ gives a projection onto $\check{\mathcal{X}}^\circ$.
    Projections $P_{\mathcal{Y}^\circ}$ and $P_{\check{\mathcal{Y}}^\circ}$ can be defined similarly.
    Notice firstly that
     \[ P_{\mathcal{Y}^\circ}T \dot{\zeta}^\circ=P_{\mathcal{Y}^\circ}T(P_{\mathcal{X}} \dot{\zeta}^\circ) + P_{\mathcal{Y}^\circ}T(P_{\check{\mathcal{X}}^\circ} \dot{\zeta}^\circ) = T(P_{\mathcal{X}} \dot{\zeta}^\circ)= L(P_{\mathcal{X}} \dot{\zeta}^\circ) .\]
    We want to show that $L(\mathcal{X}^\circ)$ has finite codimension in $\mathcal{Y}^\circ$.
    The fact that $T$ is Fredholm means there exist $\alpha_1,\ldots,\alpha_n$ such that $\mathcal{Y}^\circ_\np = T(\mathcal{X}^\circ) \oplus T(\check{\mathcal{X}}^\circ) \oplus \Span( \alpha_1,\ldots,\alpha_n )$.
    Therefore, \begin{align*}
        \mathcal{Y}^\circ &=P_{\mathcal{Y}^\circ} T(\mathcal{X}^\circ) + P_{\mathcal{Y}^\circ}T(\check{\mathcal{X}}^\circ) + P_{\mathcal{Y}^\circ}(\Span( \alpha_1,\ldots,\alpha_n ))\\
        &= L(P_{\mathcal{X}^\circ}\mathcal{X}^\circ) + L(P_{\mathcal{X}^\circ}\check{\mathcal{X}}^\circ) + \Span( P_{\mathcal{Y}^\circ}\alpha_1,\ldots,P_{\mathcal{Y}^\circ}\alpha_n )\\
        &= L(\mathcal{X}^\circ) + \Span( P_{\mathcal{Y}^\circ}\alpha_1,\ldots,P_{\mathcal{Y}^\circ}\alpha_n ),
    \end{align*}
    and the result is proved.
\end{proof}
\end{lem}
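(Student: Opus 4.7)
The plan is to derive Fredholmness of $L := D_{\zeta^\circ}\mathcal{F}^\circ(\zeta,q) \colon \mathcal{X}^\circ \to \mathcal{Y}^\circ$ from the already established Fredholmness of $T := D_{\zeta^\circ}\mathcal{F}^\circ(\zeta,q) \colon \mathcal{X}^\circ_\np \to \mathcal{Y}^\circ_\np$ by exploiting the reflectional symmetry $x \mapsto -x$. Since the system \eqref{eqn:largeampstream2} is invariant under this reflection, and the reference solution $\zeta$ lies in $\mathcal{X}$ and so already respects the symmetry, the linearization $T$ will respect the parity decompositions $\mathcal{X}^\circ_\np = \mathcal{X}^\circ \oplus \check{\mathcal{X}}^\circ$ and $\mathcal{Y}^\circ_\np = \mathcal{Y}^\circ \oplus \check{\mathcal{Y}}^\circ$, where $\check{\mathcal{X}}^\circ$ and $\check{\mathcal{Y}}^\circ$ are the opposite-parity subspaces. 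In particular $T$ splits as $L \oplus \check L$ with $\check L \colon \check{\mathcal{X}}^\circ \to \check{\mathcal{Y}}^\circ$ its restriction to the complementary parities.

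Given this invariance, I would introduce the natural (anti)symmetrization projections $P_{\mathcal{X}^\circ}$, $P_{\check{\mathcal{X}}^\circ} = I - P_{\mathcal{X}^\circ}$ on the domain and analogously $P_{\mathcal{Y}^\circ}$, $P_{\check{\mathcal{Y}}^\circ}$ on the codomain, and establish the intertwining identity $P_{\mathcal{Y}^\circ} T = L P_{\mathcal{X}^\circ}$. For the kernel, $\ker(L) \subseteq \ker(T)$ is immediately finite-dimensional. For the cokernel, I would use Fredholmness of $T$ to fix $\alpha_1,\ldots,\alpha_n \in \mathcal{Y}^\circ_\np$ spanning a complement to $T(\mathcal{X}^\circ_\np)$, and then argue that $P_{\mathcal{Y}^\circ}\alpha_1,\ldots,P_{\mathcal{Y}^\circ}\alpha_n$ span a complement to $L(\mathcal{X}^\circ)$ in $\mathcal{Y}^\circ$: given any $\beta \in \mathcal{Y}^\circ$, writing $\beta = T\dot\zeta^\circ + \sum c_i \alpha_i$ and applying $P_{\mathcal{Y}^\circ}$ produces $\beta = L(P_{\mathcal{X}^\circ}\dot\zeta^\circ) + \sum c_i P_{\mathcal{Y}^\circ}\alpha_i$, as desired.

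The only step requiring real care is the verification that $T$ actually preserves the parity decompositions. This is a bookkeeping exercise: one inspects the matrices $A$, $B_0$, $B_1$ appearing in the proof of Proposition~\ref{prop:Lopatinskii}, notes that the coefficients are built from components of the reference solution $\zeta \in \mathcal{X}$ with prescribed parities in $x$, and checks that composing such a coefficient with a $\partial_x$ or $\partial_y$ and applying the result to a component of $\dot\zeta^\circ$ of the appropriate parity yields a function with exactly the parity demanded by the corresponding component of $\mathcal{F}^\circ(\mathcal{X})$. Although tedious because of the number of components, this is mechanical, and once established the projection argument closes the proof in a few lines.
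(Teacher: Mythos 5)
Your proposal is correct and follows essentially the same route as the paper: the same decomposition $\mathcal{X}^\circ_\np = \mathcal{X}^\circ \oplus \check{\mathcal{X}}^\circ$, the same projections, the same intertwining identity $P_{\mathcal{Y}^\circ}T = LP_{\mathcal{X}^\circ}$ (used in the paper to kill the $P_{\check{\mathcal{X}}^\circ}$ contribution), and the same projection of a finite-dimensional complement of $\Range(T)$ onto a complement of $\Range(L)$. The only difference is cosmetic: you state the parity-preservation verification as an explicit bookkeeping step, whereas the paper leaves it implicit in the chain of equalities.
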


We now show that $D_\zeta \mathcal{F}(\zeta,q) \colon \mathcal{X} \to \mathcal{Y}$ is also Fredholm, and in fact has the same index as $D_{\zeta^\circ} \mathcal{F}^\circ (\zeta,q) \colon \mathcal{X}^\circ \to \mathcal{Y}^\circ$.  
Recall the definitions of $\mathcal{F}^\circ$ and $\mathcal{F}^s$ from the discussion following \eqref{eqn:defineMathcalF}.
Observe that
\begin{align}
    \label{eqn:D_zetaF matrix}
D_\zeta \mathcal{F} = \begin{pmatrix}
    D_{\zeta^\circ} \mathcal{F}^\circ & D_{(h_0,h_1)} \mathcal{F}^\circ \\
    D_{\zeta^\circ} \mathcal{F}^s & D_{(h_0,h_1)} \mathcal{F}^s
\end{pmatrix}.
\end{align}
Intuitively, every entry of the matrix in \eqref{eqn:D_zetaF matrix} except the upper-left is finite-dimensional in some sense, and so should not affect the Fredholm property. We make this rigorous with the following ``bordering" lemma, a proof of which can be found in \cite[Lemma 2.3]{Beyn:borderinglemma}.

\begin{lem}\label{lem:finiteExtensionOfFredholm}
    Let $X$, $Y$ be general Banach spaces, let $L_1 \colon X \to Y$ be Fredholm, and let $L \colon X \times \R^n \to Y \times \R^m$ be the linear map given by
    \[ L = \begin{pmatrix}
        L_1 & L_2\\
        L_3 & L_4
    \end{pmatrix}.\]
    Then $L$ is Fredholm with $\Ind(L)=\Ind(L_1)+n-m$.
\end{lem}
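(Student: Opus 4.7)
The plan is to prove the bordering lemma by comparing $L$ with a block-diagonal ``decoupled'' operator and recognising the difference as a compact (indeed finite-rank) perturbation. The key observation is that three of the four blocks of $L$ either have finite-dimensional domain or finite-dimensional codomain, so they are automatically compact, and compact perturbations preserve both the Fredholm property and the Fredholm index.

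Concretely, I would first introduce the auxiliary operator
\begin{equation*}
    \tilde L \colon X \times \R^n \to Y \times \R^m, \qquad \tilde L = \begin{pmatrix} L_1 & 0 \\ 0 & 0 \end{pmatrix},
\end{equation*}
and check directly that $\tilde L$ is Fredholm with the desired index. Its kernel decomposes as $\ker \tilde L = \ker L_1 \times \R^n$, so $\dim \ker \tilde L = \dim \ker L_1 + n$. Its range is $\Range L_1 \times \{0\}$, which has codimension $\codim \Range L_1 + m$ in $Y \times \R^m$. Subtracting gives $\Ind \tilde L = \Ind L_1 + n - m$. Since $\Range L_1$ is closed in $Y$ (because $L_1$ is Fredholm), $\Range \tilde L$ is closed in $Y \times \R^m$, confirming that $\tilde L$ is Fredholm.

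Next, I would write
\begin{equation*}
    L - \tilde L = \begin{pmatrix} 0 & L_2 \\ L_3 & L_4 \end{pmatrix},
\end{equation*}
and observe that $L_2 \colon \R^n \to Y$ and $L_4 \colon \R^n \to \R^m$ have finite-dimensional domain, while $L_3 \colon X \to \R^m$ has finite-dimensional codomain. Hence each of $L_2,L_3,L_4$ is of finite rank, so $L - \tilde L$ is a finite-rank operator between $X \times \R^n$ and $Y \times \R^m$, and in particular it is compact.

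Finally, I would invoke the classical theorem that the set of Fredholm operators is open in the operator norm topology and stable under compact perturbations, and that the Fredholm index is invariant under such perturbations (see, e.g., \cite{Buffoni--Toland:Book}). Applying this to $L = \tilde L + (L - \tilde L)$ gives that $L$ is Fredholm with $\Ind L = \Ind \tilde L = \Ind L_1 + n - m$. There is no genuine obstacle here; the only thing to be careful about is ensuring the closed-range property for $\tilde L$, which follows immediately from closedness of $\Range L_1$ and the fact that $\{0\}$ is closed in $\R^m$.
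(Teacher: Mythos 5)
Your proof is correct and complete. The paper itself does not give an argument for this lemma but simply cites Beyn's bordering lemma, so there is no internal proof to compare against; your decomposition of $L$ as the block-diagonal $\tilde L$ plus a finite-rank (hence compact) perturbation, together with the stability of the Fredholm property and index under compact perturbations, is the standard and efficient way to establish it, and all the details (the dimension count for $\ker\tilde L$, the codimension count for $\Range\tilde L$, and the closed-range observation) are in order.
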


Applying Lemma~\ref{lem:finiteExtensionOfFredholm} to $D_{\zeta^\circ} \mathcal{F}^\circ(\zeta,q)$ verifies the first part of hypothesis~\ref{largeamphypothesis2} of Theorem~\ref{thm:buffoniToland}, namely that $D_\zeta \mathcal{F}(\zeta,q)$ is Fredholm. It remains to show that  $D_\zeta \mathcal{F}(\zeta_*,q)$ is Fredholm of index 0.
To this end, we homotope the functional part $D_{\zeta^\circ} \mathcal{F}^\circ(\zeta_*,q)$ to an invertible operator.
We have
\begin{align}
    \label{eqn:linearisedAtShear}
    D_{\zeta^\circ}\mathcal{F}^\circ (\zeta_*,q) \dot{\zeta}^\circ=(A,B_0,B_1)\dot{\zeta}^\circ,
\end{align}
where 
the operator matrix $A$ has the block structure
\begin{align*}
A&= \begin{pmatrix}
    M_0 &0 \\
    0 & M_1
\end{pmatrix}\\
\intertext{with}
M_0 &= \begin{pmatrix}
k H \partial_x  & \partial_y  & 0 & -k H \omega_0 \partial_x  \\
\partial_y  & -k H \partial_x  & 0 & -\partial_y  \omega_0  \\
0 & 0 & k^2 H \partial_x  & -k \partial_y \\
0 & 0 & \partial_y  & k H \partial_x
\end{pmatrix}\\
M_1 &= \begin{pmatrix}
-k \left(1-H\right) \partial_x  & \partial_y  & 0 & k \left(1-H\right) {\omega_1} \partial_x  & 0 \\
\partial_y  & k \left(1-H\right) \partial_x  & 0 & -\partial_y  {\omega_1} & 0 \\
0 & 0 & k^2 \left(1-H\right) \partial_x  & k \partial_y  & -k \left(1-H\right) \partial_x  \\
0 & 0 & -k \partial_y  & k^2 \left(1-H\right) \partial_x  & \partial_y  \\
0 & 0 & 0 & 0 & \partial_x^2+\partial_y^2
\end{pmatrix}
,
\\
\intertext{and the boundary operators are}
B_0 &= \begin{pmatrix}
0 & 1 & 0 & 0 & 0 & 0 & 0 & 0 & 0 \\
0 & 0 & 0 & 1 & 0 & 0 & 0 & 0 & 0 \\
0 & 0 & 0 & 0 & 0 & 1 & 0 & 0 & 0 \\
0 & 0 & 0 & 0 & 0 & 0 & 1 & 0 & 0 \\
0 & 0 & 0 & 0 & 0 & 0 & 0 & 1 & 0 \\
\end{pmatrix} 
\\
B_1&=\begin{pmatrix}
1 & 0 & 0 & 0 & -1 & 0 & 0 & 0 & 0 \\
0 & 1 & 0 & 0 & 0 & -1 & 0 & 0 & 0 \\
0 & 0 & 1 & 0 & 0 & 0 & -1 & 0 & 0 \\
0 & 0 & 0 & 1 & 0 & 0 & 0 & -1 & 0 \\
0 & k^{-1} & 0 & - \left(\frac{1}{2}\omega_0H+q\right) \partial_x  & 0 & 0 & 0 & 0 & 0
\end{pmatrix}.
\end{align*}
Note that no terms are lost when we take the principal symbol, except from $B_1$, which only loses the $(5,2)$ entry. Therefore, if we homotope this component to 0, we do not change the principal symbol, and thus $(A,B_0,B_1)$ remains Fredholm.

We now homotope to an invertible operator in order to find the Fredholm index.
Let
\begin{align*}
    \tilde{A}(t)&= \begin{cases}
        \hat{A}|_{\omega_0=(1-t) \omega_0, \ \omega_1=(1-t) \omega_1, \ H=H_t, \ k= k_t} & \quad 0 \leq t \leq 1\\
        \diag \left(
        \begin{pmatrix}
            \xi  & \nu \\
            \nu  & -\xi  
        \end{pmatrix},
        \begin{pmatrix}
            2 \xi  & -2 \nu  \\
            \nu  & \xi 
        \end{pmatrix},
        \begin{pmatrix}
            -\xi  & \nu  \\
            \nu  & \xi 
        \end{pmatrix},
        \begin{pmatrix}
             2 \xi  & 2 \nu  & -\xi (2-t) \\
             -2 \nu  & 2 \xi  & \nu (2-t) \\
             0 & 0 & \xi^2+\nu^2
        \end{pmatrix} 
        \right)
        & \quad 1 < t \leq 2
    \end{cases}
\end{align*}
with boundary symbols
\begin{align*}
    \tilde{B}_0(t)&= \hat{B}_0\\
    \tilde{B}_1(t)&= \begin{cases}
        \begin{pmatrix}
1 & 0 & 0 & 0 & -1 & 0 & 0 & 0 & 0
\\
 0 & 1 & 0 & 0 & 0 & -1 & 0 & 0 & 0
\\
 0 & 0 & 1 & 0 & 0 & 0 & -1 & 0 & 0
\\
 0 & 0 & 0 & 1 & 0 & 0 & 0 & -1 & 0
\\
 0 & 0 & 0 & -\left(\left(1-t\right) \left(\frac{1}{2}\omega_0 H+q\right)+t\right) \xi  & 0 & 0 & 0 & 0 & 0
\end{pmatrix} & \quad 0 \leq t \leq 1\\
        \begin{pmatrix}
2-t & 0 & 0 & 0 & -1 & 0 & 0 & 0 & 0
\\
 0 & 2-t & 0 & 0 & 0 & -2+t & 0 & 0 & t-1
\\
 0 & 0 & 2-t & 0 & 0 & 0 & -1 & 0 & 0
\\
 t-1 & 0 & 0 & 2-t & 0 & 0 & 0 & -2+t & 0
\\
 0 & 0 & 0 & -\xi  & 0 & 0 & 0 & 0 & 0
\end{pmatrix} & \quad 1 < t \leq 2,
    \end{cases}\\
\end{align*}
where
\[ k_t= k(1-t) + 2t, \quad H_t=\frac{(1-t) H k+t}{k_t}. \]
Note $k_t$ and $H_t$ are both positive for $t \in [0,1]$, and $k_1=2$, $H_1=\frac 12$.  
The operators depend continuously on $t \in [0,2]$. 
We show the system is elliptic for all such $t$, so is Fredholm for all such $t$, and by the continuity of the index \cite[Theorem~9.12]{wrl:elliptic}, the index is constant with respect to $t$.
The same steps as in Lemma~\ref{prop:Lopatinskii} are applied, and the same DN numbers chosen. We see
\begin{align*}
    \det( \tilde{A}(t)) &= \begin{cases}
        k_t^{3} \left(H_t^2 k_t^2 \xi^2+\nu^2\right)^2 \left(\nu^2+\xi^2\right) \left( k_t^2 \xi^2 \left(1-H_t\right)^2+\nu^2\right)^2 & \quad 0 \leq t \leq 1\\
        8(\xi^2+\nu^2)^5 & \quad 1 \leq t \leq 2.\\
    \end{cases}
\end{align*}
This is non-zero for $(\xi,\nu) \neq 0$, and can be bounded below by
\begin{align*}
    \det( \tilde{A}(t)) & \geq \begin{cases}
        k_t^{3} \min(1,  k_t^4 H_t^4) \min(1,  k_t^4(1- H_t)^4)  (\nu^2+\xi^2)^5 & \quad 0 \leq t \leq 1\\
        8(\xi^2+\nu^2)^5 & \quad 1 \leq t \leq 2.\\
    \end{cases}
\end{align*}
Allowing $\nu$ to take values in $\C$, the roots of this determinant are $\{ i\xi, i k_t H_t \xi, i k_t(1- H_t) \xi  \}$ for  $0 \leq t \leq 1$, and $\{i \xi\}$ for  $1 \leq t \leq 2$.
Calculating the Lopatinskii matrices, and considering the same minor as in Lemma~\ref{prop:Lopatinskii}, we find lower bounds for the Lopatinskii constants,
\begin{align*}
    e^0_{\partial \mathcal{D}}(t) &\geq \begin{cases}
       \dfrac{1}{32 k_t^{2}|\xi|} & \quad 0 \leq t \leq 1\\[2ex]
        \dfrac{1}{128|\xi|} & \quad 1 \leq t \leq 2\\
    \end{cases}  \\
    e^1_{\partial \mathcal{D}}(t) &\geq \begin{cases}
      \dfrac{(1-t) \left(\tfrac 12 \omega_0 H+q\right)+t}{16 k_t^3} & \quad 0 \leq t \leq 1\\[2ex]
      \dfrac{(2-t)^4+(2-t)^3+2 (1-t)^2}{256} & \quad 1 \leq t \leq 2.\\
    \end{cases}
\end{align*}
Consider the associated family of operators
\begin{align*}
    &\tilde {L}\colon [0,2] \times \mathcal{X}^\circ \to \mathcal{Y}^\circ \\
    &\tilde {L}(t)\dot{\zeta}^\circ = (\tilde A (t), \tilde{B}_0 (t), \tilde{B}_1 (t)) \dot{\zeta}^\circ,
\end{align*}
and note that $\tilde{L}(0)=D_{\zeta^\circ}\mathcal{F}(\zeta_*^\circ,q)$ . Under the homotopy achieved by sending $t$ to 2, the Lopatinskii condition is retained throughout.
Thus, using arguments from Lemma~\ref{lem:FredholmAndParity}, and the same arguments from \cite{Volpert:Book,wrl:elliptic} as before, for all $t \in [0,2]$, $\tilde {L }(t)$ is Fredholm with index independent of $t$.

The endpoint operator $\tilde{ L}(2)$
has interior symbol
\begin{align*}
    \tilde{A}(2) &= \diag \left( \begin{pmatrix}
        \xi & \nu\\
        \nu & -\xi
    \end{pmatrix}, \begin{pmatrix}
        2\xi & - 2\nu\\
        \nu & \xi
    \end{pmatrix}, \begin{pmatrix}
        -\xi & \nu\\
        \nu & \xi
    \end{pmatrix}, \begin{pmatrix}
        2\xi &  2\nu\\
        -2\nu &2 \xi
    \end{pmatrix}, \begin{pmatrix}
        \xi^2+\nu^2
    \end{pmatrix} \right)
    \intertext{and boundary symbols}
    \tilde{B}_0 (2)= \hat{B}_0 &= \begin{pmatrix}
0 & 1 & 0 & 0 & 0 & 0 & 0 & 0 & 0 \\
0 & 0 & 0 & 1 & 0 & 0 & 0 & 0 & 0 \\
0 & 0 & 0 & 0 & 0 & 1 & 0 & 0 & 0 \\
0 & 0 & 0 & 0 & 0 & 0 & 0 & 1 & 0 \\
0 & 0 & 0 & 0 & 0 & 0 & 0 & 0 & 1 \\
\end{pmatrix} \\
     \tilde{B}_1 (2)&=  \begin{pmatrix}
0 & 0 & 0 & 0 & -1 & 0 & 0 & 0 & 0 \\
0 & 0 & 0 & 0 & 0 & 0 & 0 & 0 & 1 \\
0 & 0 & 0 & 0 & 0 & 0 & -1 & 0 & 0 \\
1 & 0 & 0 & 0 & 0 & 0 & 0 & 0 & 0 \\
0 & 0 & 0 & \xi & 0 & 0 & 0 & 0 & 0 \\
\end{pmatrix}. \\
\end{align*}
\begin{lem}
     The operator corresponding to the symbol $\tilde{ L}(2) \colon \mathcal{X}^\circ \to \mathcal{Y}^\circ$ is invertible.
\begin{proof}
    We begin by showing the kernel is trivial. Suppose $\tilde{L}(2) \dot\zeta =0$.
    First note that $\Delta \dot s =0$, $\dot s$ is bounded, and $\dot s (x,0)=\dot s (x,1)=0$, so $\dot s = 0$.
    For $\dot{\chi}_0$, and $\dot{\eta}_0$, note that $\dot{\eta}_0$ is harmonic, bounded, and has boundary conditions which imply  $\dot{\eta}_0(x,0)=0$,  $\dot{\eta}_0(x,1)=\lambda$ for some $\lambda \in \R$.
    This yields $\dot{\eta}_0=\lambda y$, and so $\dot{\chi}_0=\lambda x + \mu$. However, $\dot{\chi}_0$ is bounded and odd, so $\lambda=\mu=0$, therefore $\dot{\chi}_0=\dot{\eta}_0=0$.

    When considering the other components, we use the argument principle.
    Let $c$ be a simple closed contour going around the boundary of $[-\pi, \pi] \times [0,1]$, and let $f(x+iy)=\dot{u}_0(x,y) - i\dot{v}_0(x,y)$.
    Note that $\dot{v}_0(\pi,y)=\dot{v}_0(-\pi,y)=0$, since $\dot{v}_0$ is odd and $2\pi$ periodic in $x$. This, with the boundary conditions, implies that the image of $f \circ c$ is contained in $\R \cup i \R$.
    Thus, for any $z \notin \R \cup i \R$, the winding number of $f \circ c$ around $z$ is 0.
    The Cauchy--Riemann equations are satisfied by $f$, so by the argument principle, the image of $f$ is contained in $\R \cup i \R$. By the open mapping theorem we see then that $f$ must be constant, and by the boundary conditions, this constant must be 0.
    Similar arguments show that $\dot{u}_1$, $\dot{v}_1$, $\dot{\chi}_1$, and $\dot{\eta}_1$ are also $0$.

    We now show $\tilde{L}(2)$ has full range. Since it is Fredholm, and thus has closed range, it suffices to show the range contains a dense subset $\mathcal{Y}^\circ$. The set of smooth functions with the correct parities is such a subset.

    The Poisson equation with homogenous Dirichlet conditions can be solved on $\mathcal{D}$, so $\dot{s}$ is determined. Now consider $\dot{\chi}_0$ and $\dot{\eta}_0$.
    We want to show that if $f$ and $a$ are even in $x$ and smooth, and $g$ and $b$ are odd in $x$ and smooth, then
    \begin{equation}
    \label{eq:FullRangeAimingToSolve1}
        \dot{\chi}_{0x} - \dot{\eta}_{0y} =f, \quad \dot{\chi}_{0y} +  \dot{\eta}_{0x} = g, \quad \dot{\eta}_0(x,0)=a, \quad \dot{\eta}_{0x}(x,1)=b,
    \end{equation}
    has a solution $(\dot{\chi}_1,\dot{\eta}_1) \in C^{2,\gamma}(\overline{\mathcal{D}})$, with $\dot{\chi}_1$ odd, and $\dot{\eta}_1$ even.
    Note that since $b$ is odd and periodic, it has mean $0$, so its antiderivative $\beta(x)= \int_0^x b(t) \ dt$  is even and periodic.
    Let $\varphi$ solve
    \begin{align*}
        \Delta \varphi &= g_x -f_y , & \varphi(x,0)&=a , & \varphi(x,1)&=\beta.
    \end{align*}
    We define
    \begin{align*}
        \lambda &=-\frac{1}{2\pi}\int_0^{2\pi} f(t,0)+\varphi_y(t,0) \ dt,
    \end{align*}
    then try the ansatz
    \begin{align*}
        \dot{\eta}_0(x,y) &= \varphi(x,y) + \lambda y\\
        \dot{\chi}_0(x,y) &= \int_0^y g(x,t) -\varphi_x(x,t) \ dt + \int_0^x f(t,0)+\varphi_y(t,0) + \lambda \ dt,
    \end{align*}
    and see that this $\dot{\chi}_0$ and $\dot{\eta}_0$ solve \eqref{eq:FullRangeAimingToSolve1}, are $2\pi$ periodic, and have the required parities.

    Now consider $\dot{u}_1$ and $\dot{v}_1$, and note that the cases for $\dot{u}_0$, $\dot{v}_0$, $\dot{\chi}_1$ and $\dot{\eta}_1$ follow a very similar argument.
    We want to show that if $f$ and $a$ are odd in $x$ and smooth, and $g$ and $b$ are even in $x$ and smooth, then
    \begin{equation}
    \label{eq:FullRangeAimingToSolve2}
        \dot{u}_{1x} - \dot{v}_{1y} =f, \quad \dot{u}_{1y} +  \dot{v}_{1x} = g, \quad \dot{v}_1(x,0)=a, \quad \dot{u}_1(x,1)=b,
    \end{equation}
    has a solution $(\dot{u}_1,\dot{v}_1) \in C^{2,\gamma}(\overline{\mathcal{D}})$, with $\dot{u}_1$ even, and $\dot{v}_1$ odd.
    Let $\dot{u}_1$ and $\dot{v}_1$ solve
    \begin{subequations}
    \begin{align*}
        \Delta \dot{u}_1 &= f_x + g_y & \dot{u}_{1y}(x,0)&=g(x,0)-a_x & \dot{u}_1(x,1)&=b\\
        \Delta \dot{v}_1 &= g_x -f_y & \dot{v}_1(x,0)&=a & \dot{v}_{1y}(x,1)&=b_x -f(x,1).
    \end{align*}
    \end{subequations}

    Using the parities of $f$ and $g$, we see that both $\dot{u}_1(x,y)-\dot{u}_1(-x,y)$ and $\dot{v}_1(x,y)+\dot{v}_1(-x,y)$ are bounded harmonic functions, with boundary conditions implying they are 0 everywhere. Thus $\dot{u}_1$ is even in $x$ and $\dot{v}_1$ is odd in $x$.
    Now, let
    \[F(x+iy)=(\dot{u}_{1y}(x,y) +  \dot{v}_{1x}(x,y) -g(x,y))+i(\dot{u}_{1x}(x,y) - \dot{v}_{1y}(x,y) -f(x,y)).\]
    We show that $F$ is identically zero using a similar argument to when we showed the kernel is trivial.
    The Cauchy--Riemann equations are satisfied by $F$, and if we restrict the domain of $F$ to the boundary of $[-\pi,\pi] \times [0,1]$, its image is contained within $\R \cup i\R$.
    We conclude as before that by the argument principle $F=0$, and thus \eqref{eq:FullRangeAimingToSolve2} is satisfied.
\end{proof}
\end{lem}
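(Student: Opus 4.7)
The operator $\tilde L(2)$ has a natural block-diagonal structure that I would exploit aggressively. The bulk symbol $\tilde A(2)$ is block-diagonal, giving four constant-coefficient $2\times 2$ Cauchy--Riemann-type systems on the pairs $(\dot u_0,\dot v_0)$, $(\dot\chi_0,\dot\eta_0)$, $(\dot u_1,\dot v_1)$, $(\dot\chi_1,\dot\eta_1)$, plus a scalar Laplace equation for $\dot s$. Inspection of $\tilde B_0(2)$ and $\tilde B_1(2)$ shows that the boundary operators respect this decomposition: each pair has one Dirichlet condition on one of its components at $y=0$ and a Dirichlet (or in the single case of $\dot\eta_0$, tangential-derivative) condition on the other component at $y=1$. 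So the problem reduces to five independent subproblems on the periodic strip $\mathcal D$, each of which I would analyse in turn.

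For triviality of the kernel, $\dot s\equiv 0$ follows immediately from uniqueness for the Dirichlet Laplace problem on the periodic strip. In the $(\dot\chi_0,\dot\eta_0)$ block, the Cauchy--Riemann relations make $\dot\eta_0$ harmonic and $2\pi$-periodic with $\dot\eta_0(x,0)=0$ and $\partial_x\dot\eta_0(x,1)=0$; periodicity upgrades the latter to $\dot\eta_0(x,1)\equiv\lambda$ for some constant $\lambda$, whence $\dot\eta_0-\lambda y$ solves a Dirichlet Laplace problem and vanishes, giving $\dot\eta_0=\lambda y$. Integrating the Cauchy--Riemann system yields $\dot\chi_0=\lambda x+\mu$, and the odd-in-$x$ parity and $2\pi$-periodicity of $\dot\chi_0$ then force $\lambda=\mu=0$. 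For each of the remaining three blocks I would form the appropriate holomorphic combination $f$ (for instance $f=\dot u_0-i\dot v_0$) on the fundamental rectangle $[-\pi,\pi]\times[0,1]$. The two Dirichlet boundary conditions pin $\operatorname{Re} f$ to zero on one horizontal side and $\operatorname{Im} f$ on the other, while the $x$-parities together with $2\pi$-periodicity kill one of the two components on the vertical sides $x=\pm\pi$. Consequently $f(\partial D)\subseteq\R\cup i\R$, and the argument principle together with the open mapping theorem force $f$ to be constant, hence zero by the boundary data.

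For surjectivity, the Fredholm property gives closed range, so it suffices to show that the range contains a dense subspace of $\mathcal Y^\circ$; I would take smooth data with the required parities. The $\dot s$-component is solved directly by the Dirichlet Poisson equation. For the $(\dot\chi_0,\dot\eta_0)$ block, I would reduce to a scalar Poisson problem for $\dot\eta_0$ with source $g_x-f_y$, Dirichlet value $a$ at $y=0$, and Dirichlet value $\beta(x):=\int_0^x b(t)\,dt$ at $y=1$ (well-defined and periodic because $b$ has mean zero by its odd parity); a single additive correction $\lambda y$, with $\lambda$ determined from the mean over $\{y=0\}$ of the Cauchy--Riemann residual, then allows $\dot\chi_0$ to be recovered as a single-valued, periodic, odd line integral. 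The three remaining pairs are handled in the same spirit: solve one Laplace/Poisson problem for each component with appropriate mixed Dirichlet/Neumann data.

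The delicate point will be verifying that in these last three blocks the two separately-solved scalar components genuinely assemble into a solution of the Cauchy--Riemann system and not just two independent harmonic functions. I would tackle this by the argument-principle trick used in the kernel step, run in reverse: form the Cauchy--Riemann defect $F$ of the candidate pair, observe that $F$ is holomorphic by construction, verify that the enforced parities and boundary conditions place $F(\partial D)\subseteq\R\cup i\R$, and conclude $F\equiv 0$ by the argument principle and the open mapping theorem. This closes both directions and yields invertibility.
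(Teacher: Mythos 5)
Your proposal is correct and takes essentially the same route as the paper: the same block-diagonal decomposition into four Cauchy--Riemann pairs plus the scalar Laplace equation, the same Liouville-type argument for $(\dot\chi_0,\dot\eta_0)$ in the kernel step, the same argument-principle/open-mapping trick for the remaining blocks, and the same strategy for surjectivity (Fredholm closed range, explicit ansatz with the $\lambda y$ correction for $(\dot\chi_0,\dot\eta_0)$, and two decoupled Poisson problems plus an argument-principle check of the Cauchy--Riemann defect for the others). The only small slip is your description of the $(\dot\chi_0,\dot\eta_0)$ boundary data as being split across the two components — in fact both conditions fall on $\dot\eta_0$, one Dirichlet at $y=0$ and one tangential-derivative at $y=1$ — but you use the correct conditions in the argument itself, so nothing is affected.
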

Since $\tilde{ L}(2)$ is invertible, it is of of index 0.
The Fredholm index is locally constant, and since there exists a homotopy between $\tilde{ L}(2)$ and $D_{\zeta^\circ}\mathcal{F}^\circ (\zeta_*,q)$ along which the Fredholm property is satisfied, we conclude that $D_{\zeta^\circ}\mathcal{F}^\circ (\zeta_*,q)\colon \mathcal{X}^\circ \to \mathcal{Y}^\circ$ is Fredholm of index 0.
Finally, we apply Lemma~\ref{lem:finiteExtensionOfFredholm} with $n=m=2$ to conclude that $D_{\zeta}\mathcal{F} (\zeta_*,q)\colon \mathcal{X} \to \mathcal{Y}$ is Fredholm of index 0, as required.

\section{Local bifurcation}\label{sec:localBifurcation}

In this section we construct a local branch of non-trivial solutions using the classical Crandall--Rabinowitz theorem \cite{cr:simple}.
The bifurcation point is found at a root of a dispersion relation, which agrees with the dispersion relation we find in our previous work \cite{ourpaper} for solitary waves. To this end, we now verify hypothesis~\ref{largeamphypothesis3} of Theorem~\ref{thm:buffoniToland} and find $q_*$ and $\dot{\rho}_*$ such that \eqref{eq:CrandallRabinowitz} holds.
We in fact find a countable family $q_n$ of all possible bifurcation points, and the corresponding vectors $\dot{\rho}_n$.

\begin{prop}
\label{prop:CrandallRabinowitzKernel}
    The linearised operator $D \mathcal{F}_\zeta(\zeta_*,q)$ has a non-trivial kernel if and only if $q=q_n=\mathfrak d (nk)$, where
\begin{align}
    \label{eqn:dispersionRelation}
    \mathfrak{d}(k)&=\frac{1}{ k \left(\coth ( k (1-H))+\coth ( kH)\right)}-\frac{1}{2}\omega_0 H,
\end{align}
and $n$ is a positive integer.
We call the equation $q = \mathfrak d (k)$ the \emph{dispersion relation}.
Note that from \eqref{eqn:dispersionRelation} it is clear that $\mathfrak d (nk)> -\frac 12 \omega_0 H$ for all $k$, therefore $q_n \in I$, where $I$ is the interval from Theorem~\ref{thm:buffoniToland}.
  At $(\zeta_*, q_n)$ the kernel of the linearised operator is the span of $\dot{\rho}_n$, where 
\begin{align}\label{eqn:kernelAtShear}
    \dot{\rho}_n &= \begin{pmatrix}
         (c_1(n) \cosh( nk H y)+\omega_0 \sinh(nk H y))\cos nx\\
         c_1(n) \sinh(nk H y) \sin nx\\
         \cosh(nk H y) \sin nx\\
         \sinh(nk H y) \cos nx\\
        (c_2(n)\omega_1\sinh(nk  (1-H) y) -c_4(n) \cosh(nk  (1-H)  y)) \cos nx\\
         c_4(n) \sinh(nk (1-H)y) \sin nx \\
         \big(c_3(n)k^{-1}\sinh (ny)-c_2(n) \cosh (nk  (1-H) y)\big) \sin nx  \\
         c_2(n) \sinh(nk (1-H)y) \cos nx\\
         c_3(n)\sinh(ny)   \sin nx\\
         0\\
         0
        \end{pmatrix},
\end{align}
    and the constants $c_i(n)$ are defined in \eqref{eqn:kernelConstants}.
\begin{proof}
    Suppose $D_\zeta \mathcal{F}(\zeta_*,q)\dot{\zeta}=0$.
    We first show that $\dot{h}_0=\dot{h}_1=0$.
    Linearising \eqref{eqn:largeampstream2:lowerCR1}, \eqref{eqn:largeampstream2:Ybot}, and \eqref{eqn:largeampstream2:avgDepth} around $\zeta_*$ yields
    \begin{subequations}
    \begin{align}
        \label{eqn:linearisedCR1}
        k^2 H \dot{\chi}_{0x}-k\dot{\eta}_{0y}+\dot{h}_0&=0\\
        \label{eqn:linearisedYbot}
        \dot{\eta}_{0}(x,0)&=0\\
        \label{eqn:linearisedAvgDepth}
        \frac{1}{2\pi}\int_0^{2\pi} \dot{\eta}_0(x,1) \ dx &=0.
    \end{align}
    \end{subequations}
    Linearising \eqref{eqn:largeampstream2:avgDepth} actually gives another term in the integrand, $ k H \dot{\chi}_{0x}(x,1)$, but this integrates to 0 by periodicity.
    Integrating \eqref{eqn:linearisedCR1} we find
    \begin{align*}
        0&= \int_{y=0}^1 \int_{x=0}^{2\pi} k^2 H \dot{\chi}_{0x}-k\dot{\eta}_{0y}+\dot{h}_0 \ dx \ dy\\
        &=2 \pi \dot{h}_0 - k\int_{0}^{2\pi} \dot{\eta}_{0}(x,1) - \dot{\eta}_{0}(x,0) \ dx.
    \end{align*}
    Applying \eqref{eqn:linearisedYbot} and \eqref{eqn:linearisedAvgDepth} yields $\dot{h}_0 =0$. A similar argument considering \eqref{eqn:largeampstream2:upperCR1}, \eqref{eqn:largeampstream2:Ytop}, and \eqref{eqn:largeampstream2:avgDepth} shows that $\dot{h}_1 =0$.

    We now need only consider $D_{\zeta^\circ} \mathcal{F}^\circ(\zeta_*,q)\dot{\zeta}^\circ=0$ and $D_{\zeta^\circ} \mathcal{F}^s(\zeta_*,q)\dot{\zeta}^\circ=0$.
    Equation \eqref{eqn:linearisedAtShear} and the display that follows it give a formula for $D_{\zeta^\circ} \mathcal{F}^\circ(\zeta_*,q)\dot{\zeta}^\circ$.
    The linearised scalar constraints $D_{\zeta^\circ} \mathcal{F}^s(\zeta_*,q)\dot{\zeta}^\circ=0$ correspond to \eqref{eqn:linearisedAvgDepth} and
    \begin{align}
        \label{eqn:linearisedFlux}
        \int_0^1 \int_0^{2 \pi} \dot{u}_0 \ dx \ dy &=0.
    \end{align}

    Given $\dot{\zeta}^\circ \in \mathcal{X}^\circ$, let $\dot{u}_0^n(y)$ be the $n$th Fourier coefficient of $\dot{u}_0(x,y)$, where the Fourier transform is taken in $x$ only. More precisely, let
    \[\dot{u}_0^n(y)= \int_0^{2\pi} \dot{u}_0(x,y) \cos nx \ dx. \]
    Note $ \dot{u}_0(x,y)$ is even in $x$, so integrating against $\sin nx$, an odd function, would give $0$.
    Note also that we make no claims about the convergence of $\sum \dot{u}_0^n(y) \cos nx$.
    We similarly define $\dot{v}_0^n(y), \ldots, \dot{s}^n(y)$, integrating odd functions against $\sin nx$, and even ones against $\cos nx$. Let $\dot{\zeta}^{\circ n} =(\dot{u}_0^n(y), \ldots, \dot{s}^n(y)) $

    For all $n \in \mathbb{Z}$, we have
    \[\int_0^{2\pi}D_{\zeta^\circ} \mathcal{F}^\circ(\zeta_*,q)\dot{\zeta} \sin nx \ dx=\int_0^{2\pi}D_{\zeta^\circ} \mathcal{F}^\circ(\zeta_*,q)\dot{\zeta} \cos nx \ dx=0.\]
    For each fixed $n$, combining this with \eqref{eqn:linearisedAvgDepth} and \eqref{eqn:linearisedFlux} yields a system of ordinary differential equations for $\dot{u}_0^n(y), \ldots, \dot{s}^n(y)$.
    Briefly considering the zero modes  $\dot{u}_0^0(y), \ldots, \dot{s}^0(y)$ reveals they must all be 0.

    Now consider the non-zero modes. Since $q$ only appears in the the linearised kinematic boundary condition at the interface, i.e., the final row of $B_1$, we will enforce this last.
    Using \eqref{eqn:linearisedAvgDepth}, \eqref{eqn:linearisedFlux}, and every component of \eqref{eqn:linearisedAtShear} except  the final row of $B_1$, we ultimately deduce
    \begin{align*}
        \dot{\zeta}^{\circ n} &= \lambda_n \begin{pmatrix}
         c_1(n) \cosh(n k H y)+\omega_0 \sinh(n k H y)\\
         c_1(n) \sinh(n k H y)\\
         \cosh\left(n k H y\right)\\
         \sinh\left(n k H y\right)\\
        c_2(n)\omega_1\sinh\left(n k  \left(1-H\right) y\right) -c_4(n) \cosh\left(n k  \left(1-H\right)  y\right)\\
         c_4(n) \sinh\left(nk \left(1-H\right)y\right) \\
         c_3(n) k^{-1} \sinh \left(n y\right)-c_2(n) \cosh \left(n k  \left(1-H\right) y\right)  \\
         c_2(n) \sinh\left(nk \left(1-H\right)y\right)\\
         c_3(n)\sinh\left(ny\right)
        \end{pmatrix} ,
    \end{align*}
    where $\lambda_n$ is an unknown constant and
    \begin{subequations}
    \label{eqn:kernelConstants}
    \begin{align}
        c_1(n)&=-\frac{1}{\coth (n H k) + \coth (n (1-H)k) }\\
        c_2(n)&=\frac{\sinh (n H k)}{\sinh(n (1-H) k)}\\
        c_3(n)&=\frac{k \sinh(n H k)}{\sinh (n)}\left( \coth(n H k) + \coth (n  (1-H\right) k )  )\\
        c_4(n)&=-\frac{\sinh (n H k)}{\sinh(n (1-H) k) \left( \coth(n H k)+\coth(n  (1-H) k) \right)}.
    \end{align}
    \end{subequations}
    Applying the linearised kinematic boundary condition at the interface then yields that for each $n$, either $\lambda_n=0$ or $q=\mathfrak{d}(nk)$, where $\mathfrak d$ is defined in \eqref{eqn:dispersionRelation}.
    Since $\mathfrak d$ is monotone on $\R_{\geq 0}$, the equation $q=\mathfrak{d} (nk)$ can be satisfied for at most one value of $n$, which we call $m$.
    For $n \neq m$, we must have $\lambda_n=0$, and thus $\dot{\zeta}^{\circ n}=0$.

    Now, for fixed $y$, consider the function $\phi \colon x \mapsto \dot u _0(x,y)$.
    This function is bounded, so is certainly in $L^2(\T)$, and is even, so is $L^2$-orthogonal to $\sin n x$ for all $n$.
    Furthermore, we have just shown that $\phi$ is $L^2$-orthogonal to $\cos nx $ for all $n \neq m$.
    Thus, $\phi=\alpha \cos mx$ for some $\alpha \in \R$, and therefore, $\dot u _0(x,y)$ is of the form $\alpha(y) \cos mx$.
    A very similar argument holds for the other components of $\dot{\zeta}^\circ$, and thus if $\dot{\zeta}$ is in the kernel of $D_\zeta \mathcal{F}(\zeta_*,q_m)$, it must be of the form $(\alpha_1(y) \cos mx , \ldots, \alpha_9(y) \sin mx, 0,0)$.
    Considering \[D_\zeta \mathcal{F}(\zeta_*,q_m)(\alpha_1(y) \cos mx , \ldots, \alpha_9(y) \sin mx, 0,0)=0\] yields a system of ODEs for the $\alpha_i(y)$, which is in fact the same system we have already considered.
    Therefore, the kernel is contained in the span of $\dot{\rho}_m$, as defined in \eqref{eqn:kernelAtShear}.
    We see that $D_\zeta \mathcal{F}(\zeta_*,q_m)\dot{\rho}_m =0$ as required, and therefore, the kernel is exactly the span of this vector.
\end{proof}
\end{prop}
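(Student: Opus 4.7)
The plan is to linearize $\mathcal{F}$ at the shear solution $\zeta_*$ and exploit the $x$-independence of the coefficients to decouple Fourier modes. The matrix form of $D_{\zeta^\circ}\mathcal{F}^\circ(\zeta_*,q)$ displayed in \eqref{eqn:linearisedAtShear} already has constant coefficients, so Fourier series in $x$ diagonalize the bulk and boundary operators. The scalar parameters $\dot h_0, \dot h_1$ are handled first: integrating the linearized version of \eqref{eqn:largeampstream2:lowerCR1} over $\mathcal{D}$, the derivative terms reduce via periodicity and the fundamental theorem of calculus to $\int \dot\eta_0(x,1)-\dot\eta_0(x,0)\,dx$, which vanishes by the linearized boundary condition \eqref{eqn:largeampstream2:Ybot} and the linearized area constraint \eqref{eqn:largeampstream2:avgDepth}; the constant term contributes $2\pi\dot h_0$, forcing $\dot h_0=0$. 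An identical argument using the upper-layer Cauchy--Riemann equation gives $\dot h_1=0$.

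Having removed the scalar unknowns, I would project onto the $n$th Fourier mode in $x$. The parity hypotheses built into $\mathcal{X}^\circ$ dictate that $\dot u_0, \dot\eta_0, \dot u_1, \dot\eta_1$ expand in $\cos nx$ while the remaining five functional unknowns expand in $\sin nx$. Substituting these ansätze into the bulk equations produces, for each $n\neq 0$, a linear ODE system in $y$ for nine coefficient functions; the zero mode can be dispatched quickly and shown to vanish identically using the periodic homogeneous boundary data and the flux constraint \eqref{eqn:linearisedFlux}. For $n\neq 0$, I would solve the ODEs layer by layer: the harmonic equation $\Delta s=0$ together with the Dirichlet conditions $\dot s(x,0)=0$ on the bottom wall and $\dot s(x,1)=0$ (which comes from matching via \eqref{eqn:largeampstream2:ucts}--\eqref{eqn:largeampstream2:Ycts}) fixes $\dot s$ up to the $\sinh(ny)$ profile with coefficient proportional to $c_3(n)$; the Cauchy--Riemann blocks then give $\dot\chi_i,\dot\eta_i$ as combinations of $\cosh$ and $\sinh$ in $kHy$ or $k(1-H)y$; and the incompressibility/vorticity blocks determine $\dot u_i,\dot v_i$ in terms of the same hyperbolic profiles plus an $\omega_i$-driven particular solution. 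Imposing the kinematic conditions at $y=0,1$ and the continuity conditions \eqref{eqn:largeampstream2:ucts}--\eqref{eqn:largeampstream2:Ycts} across the interface pins down all coefficients up to the single free parameter $\lambda_n$, and standard algebra yields the explicit constants $c_1(n),\ldots,c_4(n)$.

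The final and decisive ingredient is the linearized interface kinematic condition \eqref{eqn:largeampstream2:kinint}, which is the only place where $q$ enters. Substituting the expression for $\dot\zeta^{\circ n}$ obtained above into this condition produces an algebraic equation of the form $\lambda_n \bigl(q + \tfrac12\omega_0 H - 1/(nk(\coth(nk(1-H))+\coth(nkH)))\bigr)=0$, which is exactly $\lambda_n(q-\mathfrak{d}(nk))=0$. Thus either $\lambda_n=0$ or $q=\mathfrak{d}(nk)$. Since $\mathfrak{d}$ is strictly monotone on $(0,\infty)$, the equation $q=\mathfrak{d}(nk)$ can hold for at most one positive integer $m$, and for all other $n$ the Fourier coefficient vanishes. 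A short $L^2(\T)$ orthogonality argument on each component of $\dot\zeta^\circ$ then shows that only the $m$th mode can be nonzero, so the kernel is spanned by the single vector $\dot\rho_m$ written in \eqref{eqn:kernelAtShear}.

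The main obstacle is purely computational rather than conceptual: correctly solving the coupled ODE system and deriving the dispersion relation requires bookkeeping of nine coefficient functions and five interface matching conditions, with the $\omega_i$-dependent particular solutions of the vorticity equations interacting with the Cauchy--Riemann blocks in a nontrivial way. Computer algebra, as noted elsewhere in the paper, is the natural tool; the guiding check is that the algebraic condition emerging from \eqref{eqn:largeampstream2:kinint} reproduces \eqref{eqn:dispersionRelation}, which matches the dispersion relation derived independently in \cite{ourpaper}.
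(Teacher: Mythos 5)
Your proposal follows essentially the same route as the paper: eliminate $\dot h_0,\dot h_1$ by integrating the linearized Cauchy--Riemann equations, decompose in Fourier modes in $x$, solve the resulting ODE system, impose the linearized interfacial kinematic condition last to obtain the dispersion relation, then conclude via monotonicity of $\mathfrak d$ and an $L^2$ orthogonality argument. The structure of the argument is sound.

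There is, however, a concrete error in the treatment of $\dot s$. You assert that a Dirichlet condition $\dot s(x,1)=0$ ``comes from matching via \eqref{eqn:largeampstream2:ucts}--\eqref{eqn:largeampstream2:Ycts}.'' Those continuity conditions involve only $u,v,\chi,\eta$ and place no constraint on $s$ at the interface; the only boundary condition imposed directly on $s$ is $\dot s(x,0)=0$ from \eqref{eqn:largeampstream2:sTop}. Your claim is also internally inconsistent: an $n$th-mode harmonic function vanishing at both $y=0$ and $y=1$ would vanish identically, contradicting your own conclusion that $\dot s$ is proportional to $\sinh(ny)$. Indeed the kernel vector \eqref{eqn:kernelAtShear} has $\dot s(x,1)=c_3(n)\sinh(n)\sin nx \neq 0$. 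The correct picture is that harmonicity plus $\dot s(x,0)=0$ fixes $\dot s = c\,\sinh(ny)\sin nx$ for an undetermined constant $c$; this constant is then pinned down only through the coupling of $\dot s$ with $\dot\chi_1,\dot\eta_1$ in the linearized upper-layer Cauchy--Riemann equations \eqref{eqn:largeampstream2:upperCR1}--\eqref{eqn:largeampstream2:upperCR2}, together with the interfacial continuity conditions. In other words, the upper-layer block cannot be solved ``layer by layer'' with $s$ decoupled first; $s$ is genuinely part of the coupled system, and this is precisely why the coordinate transformation works.
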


\begin{lem}\label{lem:transversality}
    For any $m$, the transversality condition \eqref{eq:CrandallRabinowitz:transversality} holds at $(\zeta_*,q_m)$. 
\begin{proof}
    Fix a positive integer $m$, and let $R =L'(q_m)\dot{\rho}_m$, where $L(q)=D_\zeta \mathcal{F}(\zeta_*,q)$ and $'$ denotes differentiation with respect to $q$. We assume that transversality does not hold, that is, there exists $\dot{\theta} \in \mathcal{X}$ such that
    \begin{equation}
        \label{eqn:nonTransverse}
       R = L(q_m) \dot{\theta}.
    \end{equation}
    and aim for a contradiction.

    Let $P^n_{\mathcal{X}} \colon \mathcal{X} \to \mathcal{X}$ and $P^n_{\mathcal{Y}} \colon \mathcal{Y} \to \mathcal{Y}$ be the projections onto the $n$th Fourier mode.
    More explicitly,
    \begin{align*}
        P^n_{\mathcal{X}} \begin{pmatrix}
            \dot{u}_0(x,y)\\
            \dot{v}_0(x,y)\\
            \vdots\\
            \dot{h}_1
        \end{pmatrix}&=\begin{pmatrix}
            \pi^{-1}\int_0^{2\pi} \dot{u}_0(t,y) \cos nt \ dt \cdot \cos nx\\
            \pi^{-1}\int_0^{2\pi} \dot{v}_0(t,y) \sin nt \ dt \cdot \sin nx\\
            \vdots\\
            \pi^{-1}\int_0^{2\pi} \dot{h}_1 \cos nt \ dt \cdot \cos nx\\
        \end{pmatrix}\\
        P^n_{\mathcal{Y}} \begin{pmatrix}
            \alpha_1(x,y)\\
            \alpha_2(x,y)\\
            \vdots\\
            \alpha_{21}
        \end{pmatrix}&=\begin{pmatrix}
            \pi^{-1}\int_0^{2\pi} \alpha_1(t,y) \sin nt \ dt \cdot \sin nx\\
            \pi^{-1}\int_0^{2\pi} \alpha_2(t,y) \cos nt \ dt \cdot \cos nx\\
            \vdots\\
            \pi^{-1}\int_0^{2\pi} \alpha_{21} \cos nt \ dt \cdot \cos nx\\
        \end{pmatrix},
    \end{align*}
    where the sine or cosine in the integrand is chosen to match the parity of the function it is being integrated against.
    Note that scalars only have a zero mode, so for $n \neq 0$, $P^n_{\mathcal{X}}$ and $P^n_{\mathcal{Y}}$ send the final 2 components of their arguments to 0.
    We see that $L(q)P^n_{\mathcal{X}}=P^n_{\mathcal{Y}}L(q)$ hence $L'(q)P^n_{\mathcal{X}}=P^n_{\mathcal{Y}}L'(q)$, so applying $P^m_{\mathcal{Y}}$ to \eqref{eqn:nonTransverse} yields $L'(q_m)P^m_{\mathcal{X}} \dot{\rho}_m = L(q_m) P^m_{\mathcal{X}} \dot \theta$.
    Note that $P^m_{\mathcal{X}} \dot{\rho}_m=\dot{\rho}_m$ and so $R = L(q_m) P^m_{\mathcal{X}} \dot \theta$.

    All components of $L(q)$ have no dependence on $q$, except for the 19th, the linearised kinematic boundary condition at the interface.
    Thus,
    \[L'(q_m)\zeta=(0,\ldots,0,-\dot{\eta}_{0x}(x,1), 0,0),\]
    where the non-zero component is the 19th. This means we are seeking a vector $P^1_{\mathcal{X}} \dot \theta $, which is necessarily of the form $(\beta_1(y)\cos mx, \ldots, \beta_9(y) \sin mx, 0, 0)$, such that we applying $L(q_*)$ gives 0 for every component except the 19th.
    Considering exactly the same system of ODEs as in Proposition~\ref{prop:CrandallRabinowitzKernel}, we see that $P^m_{\mathcal{X}} \dot \theta =\lambda\dot{\rho}_m$, for some $\lambda \in \R$.
    Thus $L(q_m)P^m_{\mathcal{X}} \dot \theta =0$.
    However, the 19th component of $R$ is $n\sinh(nkH)\sin nx$,
    and we have the desired contradiction.
\end{proof}
\end{lem}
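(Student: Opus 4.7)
The plan is a proof by contradiction combined with Fourier reduction in $x$. Setting $L(q) = D_\zeta \mathcal{F}(\zeta_*,q)$ and $R = L'(q_m)\dot\rho_m$, I would assume there exists $\dot\theta \in \mathcal{X}$ with $L(q_m)\dot\theta = R$ and aim to contradict this. The key structural observation is that since $\zeta_*$ has no $x$ dependence, every coefficient of $L(q)$ and $L'(q)$ is $x$-independent, so these operators commute with the projection $P^n$ onto the $n$-th Fourier mode in $x$. Since $\dot\rho_m$ is a pure $m$-th mode, so is $R$, and projecting reduces the equation to $L(q_m) P^m \dot\theta = R$, which is a problem for ODEs in $y$ alone.

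Next I would identify $R$ explicitly. Inspecting the matrix formula \eqref{eqn:linearisedAtShear}, one sees that $q$ enters only in the 19th component (the linearised kinematic interface condition), so $L'(q)\dot\zeta = (0,\ldots,0,-\dot\eta_{0x}(x,1),0,0)$. Substituting the explicit kernel vector $\dot\rho_m$ from \eqref{eqn:kernelAtShear} with its $\dot\eta_0 = \sinh(mkHy)\cos(mx)$ component, the 19th component of $R$ becomes a nonzero trigonometric multiple of $\sin(mx)$, while all other components of $R$ vanish.

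Then I would argue as follows. On the $m$-th Fourier mode, the equation $L(q_m)P^m \dot\theta = R$ requires all but the 19th component of the output to vanish. But this is precisely the ODE system analyzed in the proof of Proposition~\ref{prop:CrandallRabinowitzKernel}, before the kinematic interface condition is imposed. That analysis showed that the solution space on the $m$-th mode is one-dimensional and spanned by $\dot\rho_m$. Hence $P^m \dot\theta = \lambda \dot\rho_m$ for some $\lambda \in \R$, which forces $L(q_m) P^m \dot\theta = \lambda L(q_m)\dot\rho_m = 0$, contradicting the fact that the 19th component of $R$ is nonzero.

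The main obstacle is being confident in the claim that imposing eighteen of the nineteen differential equations on a pure $m$-mode profile already determines the profile (up to a scalar multiple) to coincide with $\dot\rho_m$; this means carefully recalling that the derivation in Proposition~\ref{prop:CrandallRabinowitzKernel} genuinely produced the kernel vector using everything \emph{except} the interface kinematic equation, using the latter only to pin down the eigenvalue relation $q = \mathfrak d(mk)$. Two minor checks are needed in addition: that the scalar constraints \eqref{eqn:largeampstream2:avgDepth}--\eqref{eqn:largeampstream2:flux}, being pure integrals over $x$, are automatically satisfied on any $m$-th mode with $m \geq 1$, and that the parameters $\dot h_0, \dot h_1$ of $\dot\theta$ contribute only to the zero mode and so drop out of $P^m \dot\theta$; both are immediate.
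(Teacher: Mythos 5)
Your proposal follows the paper's argument essentially step for step: commuting $L(q)$ and $L'(q)$ with the Fourier projection $P^m$, identifying $R$ as supported in the 19th component only, invoking the ODE analysis of Proposition~\ref{prop:CrandallRabinowitzKernel} (minus the interface kinematic condition) to conclude $P^m\dot\theta = \lambda\dot\rho_m$, and deriving the contradiction from $L(q_m)\dot\rho_m = 0$. The two minor verifications you flag at the end — that the scalar constraints and the parameters $\dot h_0, \dot h_1$ drop out on a nonzero mode — are indeed implicit in the paper and correct.
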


Taking together Proposition~\ref{prop:CrandallRabinowitzKernel} and Lemma~\ref{lem:transversality}, we have checked the local bifurcation assumption in Theorem~\ref{thm:buffoniToland}.
Applying the Crandall--Rabinowitz bifurcation theorem \cite{cr:simple} and the implicit function theorem
gives a stronger corollary.
\begin{cor}\label{cor:allBifPoints}
For each positive integer $n$, there exists a local solution curve \[(Z_{\loc,n}(\tau), Q_{\loc,n}(\tau))\] defined for $\tau$ in some small interval $(-\eps_n, \eps_n)$. 
These satisfy $Z_{\loc,n}(\tau)=\zeta_* +\tau \dot{\rho}_n + \mathcal{O}(\tau^2)$ as $\tau \to 0$, and $Q_{\loc,n}(0)=q_n$.
Furthermore, there exists a neighbourhood of each bifurcation point $(\zeta_*,q_n)$ such that the only non-trivial solutions in this neighbourhood are those given by $(Z_{\loc,n}(\tau), Q_{\loc,n}(\tau))$. Additionally, if $q \notin \{ q_n \mid n \in \N \}$, then there exists a neighbourhood of $(\zeta_*,q)$ such that the only solutions in this neighbourhood are trivial.
\end{cor}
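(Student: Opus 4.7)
The plan is a direct application of the Crandall--Rabinowitz simple-eigenvalue bifurcation theorem, supplemented by the implicit function theorem to dispose of the non-critical values of $q$ on the trivial branch. All three ingredients needed for Crandall--Rabinowitz at $(\zeta_*, q_n)$ have in fact already been assembled earlier in the excerpt: $\mathcal{F}$ is $\R$-analytic on $\mathcal{U}$ (hence in particular $C^2$ near $(\zeta_*, q_n)$), Proposition~\ref{prop:CrandallRabinowitzKernel} gives $\ker D_\zeta \mathcal{F}(\zeta_*, q_n) = \Span(\dot{\rho}_n)$, Section~\ref{sec:Fredind)} shows that $D_\zeta \mathcal{F}(\zeta_*, q_n)$ is Fredholm of index zero (so its range has codimension one), and Lemma~\ref{lem:transversality} supplies the transversality condition $D_q D_\zeta \mathcal{F}(\zeta_*, q_n) \dot{\rho}_n \notin \Range D_\zeta \mathcal{F}(\zeta_*, q_n)$.

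First I would apply the Crandall--Rabinowitz theorem at each $(\zeta_*, q_n)$. This produces an analytic branch $\tau \mapsto (Z_{\loc,n}(\tau), Q_{\loc,n}(\tau))$, defined on a small interval $(-\eps_n, \eps_n)$, with $Z_{\loc,n}(0) = \zeta_*$, $Q_{\loc,n}(0) = q_n$, and leading-order expansion $Z_{\loc,n}(\tau) = \zeta_* + \tau \dot{\rho}_n + O(\tau^2)$; analyticity of the parametrisation is inherited from the $\R$-analyticity of $\mathcal{F}$. The local-uniqueness half of the Crandall--Rabinowitz statement then furnishes a neighbourhood of $(\zeta_*, q_n)$ in which every zero of $\mathcal{F}$ lies either on the trivial branch $\{(\zeta_*, q)\}$ or on the bifurcating curve, giving the second assertion.

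For the last assertion, fix $q \in I$ with $q \notin \{q_n : n \in \N\}$. Proposition~\ref{prop:CrandallRabinowitzKernel} shows that $\ker D_\zeta \mathcal{F}(\zeta_*, q) = \{0\}$. Combined with the fact that $D_\zeta \mathcal{F}(\zeta_*, q)$ is Fredholm of index zero, this forces the operator to be a Banach-space isomorphism $\mathcal{X} \to \mathcal{Y}$. The analytic implicit function theorem applied to $\mathcal{F}(\cdot, \cdot) = 0$ at $(\zeta_*, q)$ then gives a unique analytic branch $q' \mapsto (\tilde\zeta(q'), q')$ of zeros in a neighbourhood of $q$; since the trivial family $(\zeta_*, q')$ already provides such a branch, uniqueness forces $\tilde\zeta(q') \equiv \zeta_*$, so the only solutions nearby are trivial.

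The only point that requires a brief verification, rather than mere quotation of the preceding sections, is that the Fredholm-index-zero conclusion of Section~\ref{sec:Fredind)} is available at $(\zeta_*, q)$ for \emph{every} $q \in I$, not merely at the critical values $q_n$; this is however immediate from the proof given there, since the parameter $q$ enters only through the final row of the interfacial boundary symbol $B_1$, and the homotopy used to pin down the index preserves both ellipticity and the Lopatinskii bound as long as the scalar $q + \tfrac 12 \omega_0 H > 0$, which is exactly the defining condition of the interval $I$. With this observation the proof is essentially a quotation of two classical abstract results.
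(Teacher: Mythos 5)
Your proposal matches the paper's (sketched) proof: the paper simply states that the corollary follows from the Crandall--Rabinowitz theorem together with the implicit function theorem, which is exactly your two-part argument. Your supplementary observation — that the Fredholm-index-zero conclusion must be available at \emph{every} $q\in I$, not just at the critical $q_n$, and that this holds because the homotopy in Section~\ref{sec:Fredind)} preserves the Lopatinskii bound precisely when $q+\tfrac12\omega_0H>0$ — is a correct and worthwhile point that the paper leaves implicit; alternatively one could note that the index is locally constant along the connected trivial branch $\{(\zeta_*,q):q\in I\}$.
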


We have now satisfied all the assumptions of Theorem~\ref{thm:buffoniToland}, and can extend
any of these local solution curves to a global solution curve.
However, we will only do this for  $(Z_{\loc,1}, Q_{\loc,1})$, as this will give us various desirable monotonicity properties; see Section~\ref{sec:refining}. We refer to the global extension of $(Z_{\loc,1}, Q_{\loc,1})$ as  $(Z, Q)$
From here onwards, we refer to our critical parameter value as $q_*$ rather than $q_1$, and the corresponding vector in the kernel as $\dot{\rho}_*$ rather than $\dot{\rho}_1$.

\section{Ruling out a loop}\label{sec:refining}

We have shown all the hypotheses of Theorem~\ref{thm:buffoniToland}.
The conclusion of this theorem yields that one of the two alternatives \ref{alternative:blowup} blow-up, or \ref{alternative:loop} a loop, must occur. 
In this section we show it must in fact be the former.
Consider the \emph{nodal property} given by
\begin{equation}\label{eqn:nodalproperty}
    {v_0}(x,y) < 0 \text{ for all } (x,y) \in (0,\pi) \times (0,1],
\end{equation}
and let
\begin{equation*}
    \mathcal{N} = \{ (Z(\tau), Q(\tau)) \mid \tau>0, \ (Z(\tau), Q(\tau)) \text{ satisfies \eqref{eqn:nodalproperty} } \}.
\end{equation*}
We seek to show that $\mathcal{N}$ is open and closed in $ \{ (Z(\tau), Q(\tau)) \mid \tau>0 \}$.
This task is facilitated by the Hopf lemma, the maximum principle -- see for example \cite[Section 6.4.2]{Evans:pde} -- and Serrin's edge point lemma \cite[Lemma 1]{Serrin:edgepoint}, all of which require scalar ellipticity.

\begin{lem}
    Let $(\zeta,q) \in \mathcal{U}$ satisfy \eqref{eqn:largeampstream2}.
    Then $v_0$, $v_1$, $\eta_0$, $\eta_1$ each satisfy scalar elliptic equations.
\begin{proof}
    Firstly we see from \eqref{eqn:largeampstream2:lowerCR1} and \eqref{eqn:largeampstream2:lowerCR2} that
    \begin{align}
        h_0^2 {\eta_{0xx}} + {\eta_{0yy}} = 0,
    \end{align}
    and in a similar way, \eqref{eqn:largeampstream2:lowerdivfree} and \eqref{eqn:largeampstream2:lowervort} yield
    \begin{align}\label{eq:v0}
        h_0^2 {v_{0xx}} + {v_{0yy}} = 0.
    \end{align}
    The upper layer is more complicated. 
    If we take  \eqref{eqn:largeampstream2:upperCR1} and \eqref{eqn:largeampstream2:upperCR2}, their $x$ and $y$ derivatives, and \eqref{eqn:largeampstream2:lap}, we can eliminate $s_{yy}$ and all derivatives of $\chi_1$, and see
    \begin{align*}
        \frac{s_y^2+h_1^2}{s_x+1}{\eta_{1xx}}-2s_y{\eta_{1xy}}+(s_x+1){\eta_{1yy}}+\frac{((s_x+1)^2-s_y^2 -h_1^2) s_{xx} + 2 (s_x+1) s_{xy} s_y }{(s_x+1)^2}\eta_{1x}=0.
    \end{align*}
    Thinking of this as an equation for $\eta_1$ only, it is linear and elliptic. 
    This is because if we take the principal part, change to Fourier variables $\xi$ and $\nu$, and enforce $\xi^2 + \nu^2 = 1$ by letting $\xi = \cos \theta$ and $\nu = \sin \theta$, then
    \begin{align*}
        &\frac{s_y^2+h_1^2}{s_x+1}\xi^2 -2s_y \xi \nu +(s_x+1)\nu^2 \\
        &\qquad =\frac{s_y^2+h_1^2}{s_x+1}\cos^2 \theta-2s_y\sin \theta \cos \theta +(s_x+1)\sin^2 \theta \\
        &\qquad = \frac{(s_x+1)^2+s_y^2+h_1^2}{2(s_x+1)}+\frac{-(s_x+1)^2+s_y^2+h_1^2}{2(s_x+1)} \cos 2\theta-s_y \sin 2\theta\\
        &\qquad \geq \frac{(s_x+1)^2+s_y^2+h_1^2}{2(s_x+1)} - \frac{\sqrt{\left((s_x+1)^2+s_y^2+h_1^2)^2-4(s_x+1)^2h_1^2\right)}}{2(s_x+1)}\\
        &\qquad =\frac{2(s_x+1)h_1^2}{(s_x+1)^2+s_y^2+h_1^2+\sqrt{\left((s_x+1)^2+s_y^2+h_1^2)^2-4(s_x+1)^2h_1^2\right)} }.
    \end{align*}
    Thus the principal symbol is bounded away from 0.

    Finally, we show $v_1$ satisfies a scalar elliptic equation.
    Considering \eqref{eqn:largeampstream2:upperdivfree} and its $x$ and $y$ derivatives, and  \eqref{eqn:largeampstream2:uppervort} and its $x$ derivative, we can solve for all first and second order derivatives of $u_0$, so long as $\eta_{1x} \neq 0$.
Inserting these into the $y$ derivative of \eqref{eqn:largeampstream2:uppervort} yields
    \begin{align}
    \label{eqn:v1ScalarElliptic}
        \nonumber0&=k^2(\chi_{1y}^2+\eta_{1y}^2)  {v_{1x x}}-2 k((k \chi_{1x}+1) \chi_{1y}+k \eta_{1y} \eta_{1x})  {v_{1x y}}+((k \chi_{1x}+1)^2+k^2 \eta_{1x}^2) {v_{1y y}} \\
        &\quad +\frac{\alpha(x,y) v_{1x} + \beta(x,y) v_{1y}}{(k \chi_{1x}+1) \eta_{1y}-k \chi_{1x} \eta_{1y}},
    \end{align}
where $\alpha$ and $\beta$ can be expressed as polynomials of derivatives of $\chi_1$ and $\eta_1$. 
It may be of interest to note that the right-hand side of \eqref{eqn:v1ScalarElliptic} is the composition of the Laplacian and the coordinate transform $(\chi + 1/k, \eta)$.

We wish to remove this requirement that $\eta_{1x} \neq 0$. 
Considering \eqref{eqn:largeampstream2:upperdivfree} and its $x$ derivative, and  \eqref{eqn:largeampstream2:uppervort} and its $x$ and $y$ derivatives, we can solve for all first and second order derivatives of $u_0$, so long as $\eta_{1y} \neq 0$. 
Substituting this into the $y$ derivative of \eqref{eqn:largeampstream2:upperdivfree} gives us exactly \eqref{eqn:v1ScalarElliptic}. 
Since $(\zeta,q) \in \mathcal{U}$, at each $(x,y)$ at least one of $\eta_{1x}$ and $\eta_{1y}$ must be non-zero, and so \eqref{eqn:v1ScalarElliptic} must hold everywhere.

The operator acting on $v_1$ in \eqref{eqn:v1ScalarElliptic} is elliptic. 
The principal symbol $\hat A$, is given by
    \begin{align*}
        \hat A&=k^2(\chi_{1y}^2+\eta_{1y}^2)\xi^2 -2 k((k \chi_{1x}+1) \chi_{1y}+k \eta_{1y} \eta_{1x})  \xi \nu +((k \chi_{1x}+1)^2+k^2 \eta_{1x}^2) \nu^2.
    \end{align*}
Let 
\begin{align*}
    \alpha &= \sup| (k \chi_{1x}+1)^2+k^2 \chi_{1y}^2+k^2 \eta_{1x}^2+k^2 \eta_{1y}^2| \\
    \beta &= \inf |4k^2 \left((k \chi_{1x}+1) \eta_{1y}-k \chi_{1y} \eta_{1x} \right)^2|. 
\end{align*}
Note that $\beta$ is positive by \eqref{eqn:largeampstream2:upperCR1}, \eqref{eqn:largeampstream2:upperCR2}, \eqref{eqn:defU:gradX} and \eqref{eqn:defU:s}.  
Evaluating the expression for $\hat A$ at $\xi = \cos \theta$, $\nu = \sin \theta$ yields
    \begin{align*}
        2 \hat A&= (k \chi_{1x}+1)^2+k^2 \chi_{1y}^2+k^2 \eta_{1x}^2+k^2 \eta_{1y}^2 - 2k \left( (k \chi_{1x}+1) \chi_{1y}+k \eta_{1y} \eta_{1x} \right)  \sin 2 \theta\\
        & \quad -\left((k \chi_{1x}+1)^2-k^2 \chi_{1y}^2+k^2 \eta_{1x}^2-k^2 \eta_{1y}^2 \right) \cos 2 \theta\\
        &\geq \alpha - \sqrt{\alpha ^2 -\beta },\\
        &= \frac \beta{\alpha+\sqrt{\alpha^2-\beta}}
    \end{align*}
    which is bounded away from 0.
\end{proof}
\end{lem}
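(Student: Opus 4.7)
The lower-layer equations fall out from straightforward manipulation of the Cauchy--Riemann-type relations and the divergence/vorticity equations. For $\eta_0$, I would differentiate \eqref{eqn:largeampstream2:lowerCR1} in $y$ and \eqref{eqn:largeampstream2:lowerCR2} in $x$ and combine to eliminate $\chi_0$, yielding $h_0^2 \eta_{0xx} + \eta_{0yy} = 0$. The analogous manipulation applied to \eqref{eqn:largeampstream2:lowerdivfree} (differentiated in $y$) and \eqref{eqn:largeampstream2:lowervort} (differentiated in $x$) eliminates $u_0$ while the $\omega_0$ terms cancel by equality of mixed partials, giving $h_0^2 v_{0xx} + v_{0yy} = 0$. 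Both are anisotropic Laplacians with $h_0>0$ by \eqref{eqn:defU:h}, hence uniformly elliptic.

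For $\eta_1$ the strategy is the same but the coefficients depend on the unknown harmonic extension $s$. I would differentiate the upper-layer Cauchy--Riemann-type relations \eqref{eqn:largeampstream2:upperCR1} and \eqref{eqn:largeampstream2:upperCR2}, use \eqref{eqn:largeampstream2:lap} to substitute for $s_{yy}$, and then algebraically eliminate $\chi_1$ and its derivatives. The result should be a linear second-order equation for $\eta_1$ whose principal quadratic form is
\[
\frac{s_y^2 + h_1^2}{s_x+1}\,\xi^2 - 2 s_y \,\xi\nu + (s_x+1)\,\nu^2.
\]
Multiplying by $s_x+1>0$ (using \eqref{eqn:defU:s}) and completing the square rewrites this as $(s_y\xi - (s_x+1)\nu)^2 + h_1^2\xi^2$, which is strictly positive whenever $(\xi,\nu)\neq 0$: if $\xi\neq 0$ it is bounded below by $h_1^2\xi^2$, and if $\xi=0$ it equals $(s_x+1)^2\nu^2$. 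This gives uniform ellipticity.

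For $v_1$ the computation is genuinely more delicate, because \eqref{eqn:largeampstream2:upperdivfree} and \eqref{eqn:largeampstream2:uppervort} are first-order in $u_1,v_1$ with non-constant coefficients in $\chi_1,\eta_1$. My plan is to take \eqref{eqn:largeampstream2:upperdivfree}, \eqref{eqn:largeampstream2:uppervort}, and their $x$- and $y$-derivatives, view this list as a linear system in all first- and second-order derivatives of $u_1$, invert it to express those derivatives in terms of derivatives of $v_1$ and geometric coefficients, and substitute into one further differentiated equation to obtain a closed second-order PDE for $v_1$. The principal part should be the Laplacian pulled back by the map $(\chi_1 + x/k,\,\eta_1)$, with symbol
\[
k^2(\chi_{1y}^2+\eta_{1y}^2)\xi^2 - 2k\bigl[(k\chi_{1x}+1)\chi_{1y} + k\eta_{1y}\eta_{1x}\bigr]\xi\nu + \bigl[(k\chi_{1x}+1)^2 + k^2\eta_{1x}^2\bigr]\nu^2,
\]
whose discriminant, by a completing-the-square argument parallel to the $\eta_1$ case, is controlled from below by $4k^2\bigl((k\chi_{1x}+1)\eta_{1y}-k\chi_{1y}\eta_{1x}\bigr)^2$; this is positive thanks to \eqref{eqn:largeampstream2:upperCR1}--\eqref{eqn:largeampstream2:upperCR2}, \eqref{eqn:defU:s}, and \eqref{eqn:defU:gradX}.

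The main obstacle is the inversion step for $v_1$: solving explicitly for the derivatives of $u_1$ requires dividing by a Jacobian-like quantity that one naturally expresses via $\eta_{1x}$, which vanishes at crests and troughs by symmetry. To treat every point, I would perform a parallel elimination that uses a different choice of derivatives and divides instead by $\eta_{1y}$, derive the \emph{same} scalar equation, and then argue that $\eta_{1x}^2+\eta_{1y}^2>0$ pointwise on $\overline{\mathcal D}$ by pushing \eqref{eqn:defU:gradX} through the Cauchy--Riemann relations \eqref{eqn:largeampstream2:upperCR1}--\eqref{eqn:largeampstream2:upperCR2} together with \eqref{eqn:defU:s}. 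The two local derivations then cover all of $\mathcal D$ and produce a single equation valid everywhere. The remaining symbolic manipulations and the lower-bound estimates on the principal symbols are routine but voluminous, and are cleanly handled by computer algebra exactly as in Section~\ref{sec:lopatinskii}.
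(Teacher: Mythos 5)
Your proposal is correct and follows essentially the same route as the paper: eliminate auxiliary unknowns from the reformulated system to obtain a scalar second-order PDE for each of $v_0$, $\eta_0$, $v_1$, $\eta_1$, then verify positivity of the principal symbol. The lower-layer derivations are identical; the two-fold elimination for $v_1$ (once via $\eta_{1x}$, once via $\eta_{1y}$) with the $\mathcal{U}$-based argument that $\nabla\eta_1\neq 0$ is also exactly what the paper does.

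The one place you diverge is the ellipticity check. For $\eta_1$ you multiply by $s_x+1$ and complete the square to get $(s_y\xi-(s_x+1)\nu)^2+h_1^2\xi^2$, and for $v_1$ you observe that $AC-B^2 = k^2\bigl((k\chi_{1x}+1)\eta_{1y}-k\chi_{1y}\eta_{1x}\bigr)^2$ exactly (not merely bounded below by it, so your phrasing is slightly loose there); these are clean identities that make the role of \eqref{eqn:defU:s} and \eqref{eqn:defU:gradX} transparent. The paper instead substitutes $\xi=\cos\theta,\nu=\sin\theta$ and bounds the resulting trigonometric expression, which is clunkier but yields an \emph{explicit} quantitative lower bound on the symbol of the form $\tfrac{\beta}{\alpha+\sqrt{\alpha^2-\beta}}$; the same style of estimate recurs in Lemma~\ref{lem:inductiveStep}\ref{inductivestep:uv}, where an explicit bound on $\det(\hat A)$ is needed for the uniform Schauder estimates. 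Your completing-the-square version establishes positive definiteness, and since coefficients are continuous on $\overline{\mathcal D}$ and the unit circle is compact you do get a uniform lower bound — but your dichotomy "$\xi=0$ versus $\xi\neq 0$" only gives pointwise positivity, not directly a uniform bound, so you would want either to invoke compactness explicitly or to redo the paper's quantitative estimate when it is reused later. This is a presentational wrinkle, not a gap in the logic.
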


Before we prove the open and closed condition we quickly remark on two useful pieces of algebraic manipulation, which for now will appear entirely unmotivated.
Considering \eqref{eqn:largeampstream2:upperdivfree} at some point on the interface, then applying the continuity conditions \eqref{eqn:largeampstream2:ucts}--\eqref{eqn:largeampstream2:Ycts} yields
$k \chi_{0x} v_{1y}-k \chi_{1y} v_{0x}-k \eta_{0x} u_{1y}+k \eta_{1y} u_{0x}+v_{1y}=0$.
Using \eqref{eqn:largeampstream2:lowerdivfree}, \eqref{eqn:largeampstream2:lowerCR1}, \eqref{eqn:largeampstream2:lowerCR2} to eliminate $u_{0x},\chi_{0x},\chi_{0y}$ then gives that for $x$ arbitrary and $y=1$, we have
\begin{subequations}\label{eqn:usefulFactAtInterface}
\begin{equation}\label{eqn:usefulFactAtInterface:1}
    {h_0} {\omega_0} \eta_{0x} \eta_{1y}-{h_0} \chi_{1y} v_{0x}-{h_0} \eta_{0x} u_{1y}+\eta_{0y} v_{1y}-\eta_{1y} v_{0y}=0.
\end{equation}

Consider \eqref{eqn:largeampstream2:upperCR1} at some point on the interface. Applying the continuity conditions, then using \eqref{eqn:largeampstream2:lowerCR1} to eliminate $\chi_{0x}$ yields
\begin{equation}\label{eqn:usefulFactAtInterface:2}
    h_0 \left(s_x+1\right) \eta_{1y}-h_0 \eta_{1x} s_y+h_1 \eta_{0y}=0.
\end{equation}
\end{subequations}
We are now in a position to prove the open and closed conditions.
\begin{lem}[Closed condition]\label{lem:closedcondition}
    Suppose we have $(\zeta,q) \in \mathcal{U}$ solving \eqref{eqn:largeampstream2}, such that for all $(x,y) \in [0,\pi] \times [0,1]$, we have $ {v_0}(x,y) \leq 0$. Then either the nodal property \eqref{eqn:nodalproperty} holds, or else $\zeta$ is the trivial solution $\zeta_*$.
\begin{proof}
    Thanks to \eqref{eq:v0} and the strong maximum principle, either the strict inequality holds in $(0,\pi) \times (0,1)$, or else $v_0$ is constant.
    If $v_0$ is constant, then $(\zeta,q)$ corresponds to a solution of \eqref{eqn:largeampstream} where $V$ is constant in the lower layer.
    Some straightforward calculation shows that this must be a shear solution, and so $\zeta = \zeta_*$. 
    
    It remains to consider the interface in the case that the strict inequality holds in $(0,\pi) \times (0,1)$.
    We proceed by contradiction. Suppose there exists $x_* \in (0,\pi)$ such that $ {v_0}(x_*,1) = 0$. This is a maximum point of $v_0$, and so    $ v_{0x}(x_*,1) = 0$.
    By \eqref{eqn:largeampstream2:kinint}, and the fact that we do not have stagnation on the interface, we must have $\eta_{0x}(x_*,1)= 0$, i.e., we are at a stationary point on the interface. 
    This implies that $\eta_{0y}(x_*,1) \neq 0$, since $\nabla \eta_0$ cannot vanish as we are in $\mathcal{U}$.
    At $(x_*,1)$,  \eqref{eqn:usefulFactAtInterface:1} now simplifies to  $\eta_{0y} v_{1y}=\eta_{1y} v_{0y}$,
    and \eqref{eqn:usefulFactAtInterface:2} becomes $h_0 (s_x+1) \eta_{1y}=-h_1 \eta_{0y}$, which combine to give $h_0 (s_x+1) v_{1y}+h_1 v_{0y}=0$. 
    We know that $h_0$, $h_1$, and $(s_x+1)$ are all positive, so $v_{0y}(x_*,1)v_{1y}(x_*,1) \leq 0$.

    However, the maximum principle and \eqref{eqn:largeampstream2:kintop} give us that $v_1 \leq 0$ for all $x \in [0,\pi]$, and all $y$.
    In particular $(x_*,1)$ is a global maximum for both $v_i$, and so by the Hopf lemma, we have $v_{0y}(x_*,1),v_{1y}(x_*,1) > 0$.
    In other words we have shown that  $v_{0y}(x_*,1)$ and $v_{1y}(x_*,1)$ are two positive numbers with a non-positive product, a contradiction.
\end{proof}
\end{lem}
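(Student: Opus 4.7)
The plan is to run a two-stage maximum-principle argument. In the first stage, I would apply the strong maximum principle to the scalar elliptic equation $h_0^2 v_{0xx} + v_{0yy} = 0$ derived just before the lemma, on the open rectangle $R = (0,\pi) \times (0,1)$. By \eqref{eqn:largeampstream2:kinbot}, the oddness of $v_0$ in $x$, and $2\pi$-periodicity, $v_0$ vanishes on the lateral and bottom sides of $R$, while $v_0 \le 0$ on the top by hypothesis. The strong maximum principle then forces the dichotomy: either $v_0 < 0$ throughout $R$, or $v_0 \equiv 0$ on $\overline R$. In the latter case $v_0 \equiv 0$ on all of $\mathcal D$ by the parity/periodicity structure, so that $V \equiv 0$ in $\Omega_0$; a short computation with the remaining governing equations then forces the physical flow to be a shear, and since the value of $q$ is prescribed this shear must coincide with $\zeta_*$.

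Assume henceforth that $v_0 < 0$ in $R$. To upgrade this to strict negativity on the interior of the top side $(0,\pi) \times \{1\}$, I would argue by contradiction. Suppose $v_0(x_*,1) = 0$ for some $x_* \in (0,\pi)$. Being an interior maximum of $x \mapsto v_0(x,1)$ forces $v_{0x}(x_*,1) = 0$. Combined with the non-stagnation condition \eqref{eqn:defU:interfaceStagnation} and the kinematic condition \eqref{eqn:largeampstream2:kinint} one deduces $\eta_{0x}(x_*,1) = 0$, and differentiating the continuity relation \eqref{eqn:largeampstream2:Ycts} in $x$ also gives $\eta_{1x}(x_*,1) = 0$. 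Plugging these four vanishings into the interface identities \eqref{eqn:usefulFactAtInterface:1} and \eqref{eqn:usefulFactAtInterface:2} reduces them at $(x_*,1)$ to
\begin{equation*}
    \eta_{0y} v_{1y} = \eta_{1y} v_{0y}, \qquad h_0(s_x+1)\eta_{1y} = -h_1\,\eta_{0y}.
\end{equation*}
Since \eqref{eqn:defU:gradX} and $\eta_{0x}(x_*,1) = 0$ give $\eta_{0y}(x_*,1) \neq 0$, I can divide through to combine these into the single relation
\begin{equation*}
    h_1 v_{0y}(x_*,1) + h_0 (s_x+1)(x_*,1)\, v_{1y}(x_*,1) = 0.
\end{equation*}

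To close the contradiction I would invoke the Hopf lemma twice. For $v_0$, the point $(x_*,1)$ is a non-corner boundary maximum on its domain with outward normal $+\hat y$, so Hopf gives $v_{0y}(x_*,1) > 0$. For $v_1$, I would first establish $v_1 \le 0$ throughout $\mathcal D$ by applying the maximum principle to its scalar elliptic equation (from the preceding lemma) with boundary data $v_1(\cdot,0) = 0$ from \eqref{eqn:largeampstream2:kintop} and $v_1(\cdot,1) = v_0(\cdot,1) \le 0$ from \eqref{eqn:largeampstream2:vcts}, together with the vanishing of $v_1$ at $x\in\{0,\pi\}$ by parity; then $(x_*,1)$ is a maximum of $v_1$ on its upper boundary (again with outward normal $+\hat y$), and Hopf gives $v_{1y}(x_*,1) > 0$. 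With $h_0,h_1,s_x{+}1 > 0$ in $\mathcal U$, the displayed relation becomes a sum of two strictly positive terms equalling zero, a contradiction. I expect the main obstacle to be not any single step but the bookkeeping: the argument hinges on the two interface identities \eqref{eqn:usefulFactAtInterface} collapsing in exactly the right way, and on the non-stagnation built into $\mathcal U$ being strong enough to ensure both that $\eta_{0x}$ is forced to vanish (via $u_0+q\neq 0$ at the interface) and that $\eta_{0y}$ does not, so the final sign-incompatible identity actually emerges.
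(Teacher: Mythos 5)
Your proof is correct and follows essentially the same route as the paper's: a strong maximum principle on $h_0^2 v_{0xx}+v_{0yy}=0$ for the interior dichotomy, followed by a contradiction argument at a hypothetical interior zero $(x_*,1)$ on the interface, using the kinematic condition to force $\eta_{0x}(x_*,1)=0$, the two interface identities \eqref{eqn:usefulFactAtInterface} to produce $h_1 v_{0y}+h_0(s_x+1)v_{1y}=0$, and the Hopf lemma applied to both $v_0$ and $v_1$ to make each term strictly positive. The only cosmetic difference is that you explicitly note $\eta_{1x}(x_*,1)=0$ (via differentiating the continuity relation \eqref{eqn:largeampstream2:Ycts}), which the paper uses implicitly in simplifying \eqref{eqn:usefulFactAtInterface:2}; your aside about $q$ in the trivial case is unnecessary since $\zeta_*$ does not depend on $q$, but it is harmless.
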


\begin{lem}[Open condition]\label{lem:opencondition}
    Suppose $(\zeta,q) \in \mathcal{U}$ solves \eqref{eqn:largeampstream2}, and satisfies the nodal property \eqref{eqn:nodalproperty}. Then all $(\tilde \zeta, \tilde q)$ solving \eqref{eqn:largeampstream2} sufficiently close to $(\zeta,q)$ in $\mathcal{X}\times \R$ satisfy \eqref{eqn:nodalproperty}.
\begin{proof}
    We first show that $(\zeta,q)$ satisfies $v_{0x}(0,1) \neq 0$. 
    Suppose for contradiction that $v_{0x}(0,1) = 0$.
    Note, that since we are at $x=0$, we are able to use the parity of the functions to deduce that many terms are 0.
    For example, the $\eta_i$ are even in $x$, so $\eta_{ix}(0,1)=0$.
    We can then use \eqref{eqn:usefulFactAtInterface:2} and the fact that $(\zeta,q) \in \mathcal{U}$ as before to deduce that $\eta_{0y}\eta_{1y} \leq 0$ at $(0,1)$. 
    This inequality is made strict by the fact that $\nabla \eta_i$ is never 0.
    We now consider the $v_i$, and use the Hopf lemma to see that ${v_{ix}}(0,y)<0$ for all $y \in (0,1)$. 
    Since ${v_{ix}}(0,1)=0$, we deduce ${v_{ixy}}(0,1)\geq 0$.

    Applying the parity conditions to the $x$ derivative of \eqref{eqn:largeampstream2:kinint} at $(0,1)$ gives $(u_0+q){\eta_{0xx}}=0$.
    Since there can be no stagnation at the interface, we deduce that ${\eta_{0xx}}(0,1)=0$.
    Inserting this, the parity conditions, and $v_{0x}(0,1) = 0$ into the $x$ derivative of \eqref{eqn:usefulFactAtInterface:1} gives $\eta_{0y} {v_{1xy}}=\eta_{1y} {v_{0xy}}$.
    This, with the sign conditions we have just shown implies that we must have ${v_{1xy}}={v_{0xy}}=0$.
    Therefore we know that all derivatives of $v_0$ of order at most two are 0 at $(0,1)$: $v_0$, $v_{0y}$, ${v_{0xx}}$, and ${v_{0yy}}$ by parity, $v_{0x}$ by assumption, and ${v_{0xy}}$ by the previous proof.
    However, this contradicts Serrin's edge point lemma \cite[Lemma~1]{Serrin:edgepoint} since by the nodal property $v_0 \not\equiv 0$.

    Thus we know that $v_{0x}(0,1)\neq0$, and since the nodal property is satisfied, we know it is negative. 
    The same argument shows that $v_{0x}(\pi,1)>0$, and we know from the nodal property that $v_0(x,1)<0$ for all $x \in (0,\pi)$.

    Now consider the sets
    \begin{align*}
        A&=\{f \in C^1([0,\pi]) \mid f(0)=f(\pi)=0 \} \\
        B&=\{ f \in A \mid f'(0)<0<f'(\pi)  , \ f(x)<0 \text{ for } x \in (0,\pi)\}.
    \end{align*}
    It is an easy exercise to show that $B$ is open in $A$, with respect to the $C^1$ norm.
    Thus, there exists $\eps>0$ such that for all odd $\tilde{v}_0$ with $\lVert v_0 - \tilde{v}_0 \rVert_{C^1(\mathcal{D})} < \eps$, we have $\tilde{v}_0(x,1)<0$ for all $x \in (0,\pi)$.
    In particular, let $(\tilde \zeta, \tilde q )$ solve \eqref{eqn:largeampstream2} and satisfy $\lVert (\zeta, q) - (\tilde \zeta, \tilde q) \rVert_{\mathcal{X} \times \R} < \eps$, and let $\tilde{v}_0$ be the second component of $\tilde \zeta$.
    Then, $\tilde{v}_0(x,1)<0$ for all $x \in (0,\pi)$.
    We also know from oddness that $\tilde{v}_0(0,y)=\tilde{v}_0(\pi,y)=0$ for all $y \in (0,1)$, and from \eqref{eqn:largeampstream2:kinbot} that $\tilde{v}_0(x,0)=0$ for all $x \in (0,\pi)$.
    Therefore, by the strong maximum principle, $(\tilde \zeta, \tilde q )$ satisfies the nodal property.
\end{proof}
\end{lem}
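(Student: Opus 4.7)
The plan is to prove the nodal property is preserved under small perturbations by a three-step argument: (1) establish strict sign conditions for $v_{0x}$ at the corners $(0,1)$ and $(\pi,1)$ of the lower layer, (2) observe that these strict conditions define an open set in an appropriate $C^1$ topology, so the interfacial trace of $\tilde v_0$ stays strictly negative on $(0,\pi)$, and (3) propagate this inequality into the interior via the scalar elliptic equation \eqref{eq:v0} and the strong maximum principle.

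For step (1), I would argue by contradiction that $v_{0x}(0,1) = 0$ is impossible. Parity alone forces $v_0(0,y) \equiv 0$, so in particular $v_{0y}(0,1) = v_{0xx}(0,1) = v_{0yy}(0,1) = 0$ at the corner. If additionally $v_{0x}(0,1) = 0$, it remains to show the mixed derivative $v_{0xy}(0,1)$ vanishes, which would give a second-order zero at a corner point in violation of Serrin's edge point lemma (since the nodal property ensures $v_0 \not\equiv 0$). To obtain $v_{0xy}(0,1) = 0$ I would differentiate the kinematic condition \eqref{eqn:largeampstream2:kinint} in $x$ and evaluate at $(0,1)$; by parity and the non-stagnation condition \eqref{eqn:defU:interfaceStagnation} this forces $\eta_{0xx}(0,1) = 0$. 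Differentiating \eqref{eqn:usefulFactAtInterface:1} in $x$ and using parity plus the assumption $v_{0x}(0,1) = 0$ then collapses to $\eta_{0y}\, v_{1xy} = \eta_{1y}\, v_{0xy}$ at $(0,1)$. Meanwhile \eqref{eqn:usefulFactAtInterface:2}, together with $\eta_{ix}(0,1) = 0$ from parity and $h_0, h_1, s_x+1 > 0$, forces $\eta_{0y}(0,1)$ and $\eta_{1y}(0,1)$ to have opposite (non-zero) signs. The Hopf lemma applied to the scalar elliptic equations satisfied by $v_0$ and $v_1$ -- using the nodal property in the lower layer and the maximum principle together with \eqref{eqn:largeampstream2:kintop} in the upper layer -- gives that $v_{ixy}(0,1)$ have the same sign (they are both sign-constrained via the second-order boundary Hopf argument). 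Combined with the relation above, this can only happen if $v_{0xy}(0,1) = v_{1xy}(0,1) = 0$, which triggers the Serrin edge-point contradiction.

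For step (2), an identical argument at $x = \pi$ yields $v_{0x}(\pi,1) > 0$. Define
\begin{align*}
A &= \{ f \in C^1([0,\pi]) \mid f(0) = f(\pi) = 0 \}, \\
B &= \{ f \in A \mid f'(0) < 0 < f'(\pi), \ f(x) < 0 \text{ for } x \in (0,\pi) \}.
\end{align*}
A straightforward exercise shows $B$ is open in $A$ with the $C^1$ norm (the boundary behaviour follows from the strict derivative conditions, and interior negativity on a compact subinterval follows from uniform $C^0$ closeness). Since the trace map $\zeta \mapsto v_0(\cdot,1)$ is continuous from $\mathcal{X}$ into $C^1([0,\pi])$, any $(\tilde\zeta,\tilde q)$ sufficiently close to $(\zeta,q)$ has $\tilde v_0(\cdot,1) \in B$, hence $\tilde v_0(x,1) < 0$ for $x \in (0,\pi)$.

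For step (3), $\tilde v_0$ satisfies the same scalar elliptic equation \eqref{eq:v0} (with its own coefficients depending on $\tilde h_0$). On the rectangle $[0,\pi] \times [0,1]$, the function $\tilde v_0$ vanishes on $x=0$ and $x=\pi$ by parity and periodicity, vanishes on $y=0$ by \eqref{eqn:largeampstream2:kinbot}, and is strictly negative on $y=1$ except at the endpoints by step (2). Thus $\tilde v_0 \le 0$ on $\partial([0,\pi]\times[0,1])$ with strict inequality on a non-empty open subset, so the strong maximum principle gives $\tilde v_0 < 0$ throughout $(0,\pi) \times (0,1]$, verifying the nodal property.

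The main obstacle is step (1): the algebraic elimination to obtain $v_{0xy}(0,1) = 0$ requires careful simultaneous use of the derived interfacial identities \eqref{eqn:usefulFactAtInterface:1}--\eqref{eqn:usefulFactAtInterface:2}, the differentiated kinematic condition, the parity relations, and the Hopf lemma applied in both fluid layers. The remaining steps (openness of $B$ and the maximum-principle propagation) are standard.
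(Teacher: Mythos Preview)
Your proposal is correct and follows essentially the same approach as the paper's proof: the same Serrin edge-point contradiction at $(0,1)$ using the differentiated identities \eqref{eqn:usefulFactAtInterface:1}--\eqref{eqn:usefulFactAtInterface:2} and the kinematic condition, the same openness argument via the sets $A$ and $B$ in $C^1$, and the same strong-maximum-principle propagation from the boundary into the interior. The only cosmetic difference is that the paper makes slightly more explicit the intermediate step $v_{ix}(0,y)<0$ for $y\in(0,1)$ (via Hopf on each layer) from which $v_{ixy}(0,1)\ge 0$ is read off directly, whereas you refer to this as a ``second-order boundary Hopf argument''; the content is the same.
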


\begin{cor}\label{cor:noLoop}
    The solution $(Z(\tau),Q(\tau))$ satisfies the nodal peroperty \eqref{eqn:nodalproperty} for all $\tau > 0$. In particular, there is no $T>0$ such that $Z(T)=\zeta_*$, and so
    alternative~\ref{alternative:loop} cannot occur.
\begin{proof}   
    We know from Corollary~\ref{cor:allBifPoints} that for small $\eps$, at $Z(\eps)$ we have $v_0(x,1)=\eps \alpha \sin x + \mathcal{O}(\eps^2)$, for some constant $\alpha$ independent of $\eps$.
    From this we deduce that $(Z(\eps),Q(\eps)) \in \mathcal{N}$ for sufficiently small $\eps > 0$, by again considering the sets $A$ and $B$ from the end of Lemma~\ref{lem:opencondition} and the fact that $B$ is open in $A$.

    We now argue as in \cite[Theorem~13]{csv:global}.
    Suppose for contradiction that there exists $T>0$ such that $Z(T) = \zeta_*$. 
    There must in fact exist a minimal such $T$, since $Z(\eps)$ satisfies the nodal property for all small $\eps$.
    Consider the set \[\mathcal{C} = \{(Z(\tau),Q(\tau)) \mid \tau \in (0,T)\}.\]
    The intersection $\mathcal{C}\cap\mathcal{N}$ is non-empty, as it contains $(Z(\eps),Q(\eps))$, and by the open and closed conditions given in Lemmas~\ref{lem:opencondition} and \ref{lem:closedcondition}, it is open and closed in $\mathcal{C}$.
    Since $\mathcal{C}$ is a connected set we have $\mathcal{C}\cap\mathcal{N} = \mathcal{C}$, or in other words, every element of $\mathcal{C}$ satisfies the nodal property \eqref{eqn:nodalproperty}.
    Now consider the solutions $(Z(T-\eps),Q(T-\eps))$. 
    These are a family of non-shear solutions which tend to a shear solution as $\eps \to 0$, or in other words, $(Z(T),Q(T))$ is a bifurcation point. 
    Thus, recalling the local solution curves $(Z_{\loc,n},Q_{\loc,n})$ from Corollary~\ref{cor:allBifPoints}, we see that for some $n$, possibly after reparametrisation, for all small positive $\eps$ either \[(Z(T-\eps),Q(T-\eps)) = (Z_{\loc,n}(\eps),Q_{\loc,n}(\eps)),\] or \[(Z(T-\eps),Q(T-\eps)) = (Z_{\loc,n}(-\eps),Q_{\loc,n}(-\eps)).\]
    Appealing again to Corollary~\ref{cor:allBifPoints} give that $Z_{\loc,n}(\eps) = \zeta_*+\eps\dot{\rho}_n+\mathcal{O}(\eps^2)$, and so in order to satisfy the nodal property, we must have \[(Z(T-\eps),Q(T-\eps)) = (Z_{\loc,1}(\eps),Q_{\loc,1}(\eps)) = (Z(\eps),Q(\eps))\] 
    This however contradicts the proof of Theorem~\ref{thm:buffoniToland}, which classifies how the solution curve can self-intersect.

    The second statement is therefore proved. Setting $T = \infty$ in the definition of $\mathcal{C}$ and appealing as Lemmas~\ref{lem:opencondition} and \ref{lem:closedcondition}
    as above, we similarly obtain $\mathcal{C} \cap \mathcal{N} = \mathcal{C}$, which is the first statement.
\end{proof}
\end{cor}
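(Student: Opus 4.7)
The plan is to propagate the nodal property \eqref{eqn:nodalproperty} along the curve $(Z(\tau), Q(\tau))$ using the open--closed dichotomy provided by Lemmas~\ref{lem:opencondition} and~\ref{lem:closedcondition}, then deduce that alternative~\ref{alternative:loop} cannot occur, since any closed loop must satisfy $Z(T) = Z(0) = \zeta_*$ for its period $T$, while a nodal solution is necessarily non-trivial (as $v_0 \not\equiv 0$).

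First I would establish the base case. From Corollary~\ref{cor:allBifPoints} we have $Z(\tau) = \zeta_* + \tau \dot{\rho}_* + \mathcal{O}(\tau^2)$ with $\dot{\rho}_* = \dot{\rho}_1$, whose $v_0$-component, read off from \eqref{eqn:kernelAtShear} and \eqref{eqn:kernelConstants}, is $c_1(1)\sinh(kHy)\sin x$ with
\[ c_1(1) = -\frac{1}{\coth(Hk) + \coth((1-H)k)} < 0. \]
Arguing as in the final paragraph of the proof of Lemma~\ref{lem:opencondition}, using the openness of the set
\[ B = \{f \in C^1([0,\pi]) : f(0) = f(\pi) = 0,\ f'(0) < 0 < f'(\pi),\ f < 0 \text{ on } (0,\pi)\}, \]
together with the strong maximum principle applied to the scalar elliptic equation for $v_0$, this gives $(Z(\varepsilon), Q(\varepsilon)) \in \mathcal{N}$ for all sufficiently small $\varepsilon > 0$.

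Next I would propagate globally. Let $S = \{\tau > 0 : (Z(\tau), Q(\tau)) \in \mathcal{N}\}$ and $S' = \{\tau > 0 : Z(\tau) \neq \zeta_*\}$. By Lemma~\ref{lem:opencondition}, $S$ is open in $(0,\infty)$; to show $S$ is relatively closed in $S'$, I take $\tau_j \in S$ with $\tau_j \to \tau_\infty \in S'$, pass to the limit to obtain $v_0 \leq 0$ on $[0,\pi] \times [0,1]$, then invoke Lemma~\ref{lem:closedcondition} to conclude either $\tau_\infty \in S$ or $Z(\tau_\infty) = \zeta_*$, the latter being excluded. Since small positive $\tau$ lies in $S$, each connected component of $S'$ meeting some interval $(0,\varepsilon_0)$ is contained in $S$. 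Suppose for contradiction there is a minimal $T > 0$ with $Z(T) = \zeta_*$, which exists by continuity as $Z(\varepsilon) \neq \zeta_*$ for small $\varepsilon > 0$; then $(0, T) \subset S$. The point $(\zeta_*, Q(T))$ is a limit of non-trivial solutions, hence a bifurcation point, so by Corollary~\ref{cor:allBifPoints}, $Q(T) = q_n$ for some positive integer $n$, and the curve near $T$ coincides with $(Z_{\loc,n}(\pm \varepsilon), Q_{\loc,n}(\pm \varepsilon))$. The leading $v_0$-contribution of $\dot{\rho}_n$ is proportional to $\sin(nx)$, which for $n \geq 2$ changes sign on $(0, \pi)$ and is incompatible with the nodal property holding just before $T$. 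So $n = 1$, forcing the branch arriving at $T$ to coincide with the one leaving at $\tau = 0$.

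The main obstacle is the final step: turning the coincidence $(Z(T-\varepsilon), Q(T-\varepsilon)) = (Z(\varepsilon), Q(\varepsilon))$ for small $\varepsilon > 0$ into a rigorous contradiction. The expected route is that such a retracing implies the global analytic curve of Theorem~\ref{thm:buffoniToland} has already formed a closed loop of period strictly less than $T$ before reaching it, by the classification of self-intersections built into Dancer's theory of real-analytic varieties underlying that theorem. This contradicts the minimality of $T$, so no such $T$ exists, the nodal property persists for all $\tau > 0$, and alternative~\ref{alternative:loop} cannot occur.
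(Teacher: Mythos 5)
Your proposal follows essentially the same route as the paper: (i) read off the sign of the leading $v_0$ contribution of $\dot\rho_1$ to show $(Z(\eps),Q(\eps))\in\mathcal{N}$ for small $\eps>0$, (ii) run an open--closed argument along the curve using Lemmas~\ref{lem:opencondition} and~\ref{lem:closedcondition} to propagate the nodal property up to a hypothetical minimal $T>0$ with $Z(T)=\zeta_*$, (iii) recognize $(\zeta_*,Q(T))$ as a bifurcation point and use the parity of $\sin(nx)$ to force $n=1$ and hence a retracing $(Z(T-\eps),Q(T-\eps))=(Z(\eps),Q(\eps))$, and (iv) derive a contradiction from the Dancer/Buffoni--Toland structure.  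The only place you diverge from the paper is in the final logical move, and there your formulation is not quite right: the retracing $(Z(T-\eps),Q(T-\eps))=(Z(\eps),Q(\eps))$ does not produce a closed loop of period strictly less than $T$ (a ``closed loop'' in the sense of alternative~\ref{alternative:loop} means $Z(\tau+T')=Z(\tau)$ for all $\tau$, whereas what you have obtained is a palindrome $Z(T-\tau)=Z(\tau)$, i.e.~a fold, not a shorter period), so the appeal to minimality of $T$ does not close the argument.  The paper's version instead invokes the construction in the proof of Theorem~\ref{thm:buffoniToland} directly: the global curve is assembled from distinguished arcs of a real-analytic variety, each traversed exactly once, and the coincidence $(Z(T-\eps),Q(T-\eps))=(Z(\eps),Q(\eps))$ would force the local bifurcation arc at $q_*$ to be traversed twice (once forward, once backward), which the construction forbids.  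That is the contradiction; your version reaches for a consequence (a smaller-period loop) that does not actually follow.  Aside from this imprecision, which you flagged yourself as the ``main obstacle,'' your proof matches the paper's.
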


\section{Uniform regularity and compactness}\label{sec:UniformReg}

In this section, we complete the proof of Theorem~\ref{thm:largeampmain}, and show that  alternative~\ref{alternative:blowup} implies \eqref{eqn:finalConclusion}. 
We find a family of sets which we call $E_M$ and show that these are compact in $\mathcal{U}$. They are in some sense the only compact sets we need to consider, as we show that any other compact set is a subset of some $E_M$.
The compactness arises as a consequence of a much stronger result, namely that in $E_M$ we have control over $\lVert (\zeta,q) \rVert_{\ell +\gamma}$ for arbitrary $\ell$.

We make use of the conformal maps $(\hat X_i, \hat Y_i)=(X_i,Y_i) \circ S_i^{-1}$, as in \eqref{eqn:introduceXhatYhat:0} and\eqref{eqn:introduceXhatYhat:1}.
Similarly, let $(\hat u_i, \hat v_i)$ be given by $(\hat u_i, \hat v_i)=(u_i,v_i) \circ S_i^{-1}$, or equivalently, $(\hat u_i, \hat v_i)=(U,V) \circ (\hat X_i, \hat Y_i)$.
Note that these inverses exist thanks to Lemma~\ref{lem:bijectiveInU}.
Let $\interface$ be the set $\{ (x,1) \mid x \in \T \}$, and let $\hat \interface$ be the set $\{ (x,h_0) \mid x \in \T \}$. Note that $\hat \interface = S_0(\interface)=S_1(\interface)$.
We use $\lVert \cdot \rVert_{\ell+\gamma}$ to mean the $C^{\ell,\gamma}$ norm on the function's domain of definition.
We use $\lVert \cdot \rVert_{\ell+\gamma,\interface}$ to mean the $C^{\ell,\gamma}(\interface)$ norm, and $\lVert \cdot \rVert_{\ell+\gamma,\tilde \interface}$ to mean the $C^{\ell,\gamma}( \tilde \interface)$ norm. 
Given $M>1$, define $E_M$ to be the set of $(\zeta,q) \in \mathcal{U}$ such that $\mathcal{F}(\zeta,q)=0$ and
\begin{equation}\label{eqn:bestBreakdownConditions}
    \frac 1M \leq \inf_{\tilde \interface}|\nabla \hat Y_i|^2, 
    \qquad 
    \frac 1M \leq \inf_{\interface}((u_0+q)^2+v_0^2),
    \qquad
    \sup_{\tilde \interface} | \nabla \hat{Y}_i|^2 \le M.
\end{equation}

We begin with the following straightforward lemma.
\begin{lem}\label{lem:closedBddImpliesBestBreakdown}
    Let $K$ be a compact subset of $ \{ (\zeta,q) \in \mathcal{U} \mid \mathcal{F}(\zeta,q)=0 \}$.
    There exists $M>1$ such that $K \subseteq E_M $.
\begin{proof}
    The map $(\zeta,q) \mapsto \inf_x( (u_0(x,1)+q)^2+v_0(x,1)^2)$ is continuous with respect to the norm on $\mathcal{X}\times \R$. 
    Thus, the image of $K$ under this map is compact and does not contain 0, giving the required bound. 
    To see the other two bounds, we apply the chain rule to the equations $X_i = \hat X_i \circ S_i$ and $Y_i = \hat Y_i \circ S_i$, yielding
    \[|(\nabla \hat Y_0) \circ S_0|^2 = X_{0x}^2 + Y_{0x}^2, \quad |(\nabla \hat Y_1) \circ S_1|^2 = \frac{X_{1x}^2 + Y_{1x}^2}{(s_x+1)^2}.\]
    Taking the infimum and supremum of these quantities correspond to continuous maps from $\mathcal{U}$ to $\R$, and arguing as before, we have the required bounds.
\end{proof}
\end{lem}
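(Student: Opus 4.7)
The plan is to realise each of the three defining inequalities of $E_M$ as a pointwise lower/upper bound on the value of a continuous real-valued function $\mathcal{U} \to \R$, and then invoke compactness of $K$ together with the fact that each such function is bounded away from its forbidden value on all of $\mathcal{U}$ by the very definition of $\mathcal{U}$.

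The easiest of the three is the no-stagnation bound $\inf_{\interface}((u_0+q)^2+v_0^2)$. The trace map $C^{2,\gamma}(\overline{\mathcal{D}}) \to C^{0}(\overline{\interface})$ is continuous, polynomial operations in $C^{0}(\overline{\interface})$ and addition of the scalar $q$ are continuous, and $f \mapsto \min f$ is continuous on $C^0(\overline{\interface})$; composing these shows $(\zeta,q) \mapsto \inf_{\interface}((u_0+q)^2+v_0^2)$ is continuous from $\mathcal{X}\times\R$ to $\R$. By \eqref{eqn:defU:interfaceStagnation} it is strictly positive on $\mathcal{U}$, and hence attains a positive minimum on compact $K$, yielding the second inequality in \eqref{eqn:bestBreakdownConditions} for a suitable constant.

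For the two bounds on $|\nabla \hat Y_i|$ over $\tilde\interface$, I would first translate $|\nabla \hat Y_i|$ into a quantity depending continuously on $\zeta$. Since $(\hat X_i,\hat Y_i) = (X_i,Y_i)\circ S_i^{-1}$ is conformal by Lemma~\ref{lem:bijectiveInU}, applying the chain rule to $X_i = \hat X_i \circ S_i$ and $Y_i = \hat Y_i \circ S_i$, together with the Cauchy--Riemann equations for $(\hat X_i,\hat Y_i)$, should give pointwise identities of the form
\[
|\nabla \hat Y_0|^2 \circ S_0 = X_{0x}^2 + Y_{0x}^2, \qquad (s_x+1)^2\bigl(|\nabla \hat Y_1|^2 \circ S_1\bigr) = X_{1x}^2 + Y_{1x}^2,
\]
where the right-hand sides are polynomial expressions in derivatives of the $C^{2,\gamma}$ components of $\zeta$. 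Since $S_i$ maps $\interface$ bijectively onto $\tilde\interface$, the sup and inf of $|\nabla \hat Y_i|^2$ over $\tilde\interface$ equal the corresponding sup and inf over $\interface$ of the right-hand sides (divided by $(s_x+1)^2$ for $i=1$). The same trace-and-sup recipe as above shows these are continuous functions of $(\zeta,q)$, and they are strictly positive and finite on $\mathcal{U}$ by \eqref{eqn:defU:s} and \eqref{eqn:defU:gradX}.

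Taking $M$ to be the maximum of the upper bound from the third condition, the reciprocals of the positive lower bounds from the first two, and $1$, gives a valid choice; each of these is finite because $K$ is compact and the underlying maps are continuous with positive/finite range on $\mathcal{U}$. I don't expect any real obstacle beyond keeping the chain-rule identities straight for the non-orthogonal $S_1$; the whole argument is just ``compactness promotes pointwise strict inequalities in the definition of $\mathcal{U}$ to uniform ones on $K$.''
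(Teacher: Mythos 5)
Your proposal is correct and follows essentially the same route as the paper's proof: continuity of the no-stagnation quantity, and the chain-rule identities
\[
|(\nabla \hat Y_0) \circ S_0|^2 = X_{0x}^2 + Y_{0x}^2, \qquad |(\nabla \hat Y_1) \circ S_1|^2 = \frac{X_{1x}^2 + Y_{1x}^2}{(s_x+1)^2},
\]
followed by the observation that these expressions (and their infima/suprema over the interface) depend continuously on $(\zeta,q)$ and are bounded away from the forbidden values on $\mathcal{U}$. You are slightly more explicit than the paper in two useful places — invoking the Cauchy--Riemann relations (which are genuinely needed to pass from the chain-rule outputs $\hat X_{ix}, \hat Y_{ix}$ to the full gradient $|\nabla \hat Y_i|$) and noting that $S_i$ maps $\interface$ bijectively onto $S_i(\interface)$ so that the sup/inf can be taken in either parametrisation — but these are spelled-out versions of steps the paper takes implicitly, not a different argument.
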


A series of lemmas are proven, which allow us to control  $\lVert (\zeta,q) \rVert_{\ell +\gamma}$ on $E_M$ for arbitrary $\ell$, in terms of $M$.
Throughout the following we appeal to \cite[Theorem~9.3]{ADN}, and \cite[Theorem~8.33]{GilbargTrudinger:book}.
We employ an inductive argument, and begin with some steps analogous to a base case.
\begin{lem}\label{lem:inductiveBaseCase}
There exist various functions which we refer to interchangeably as $f$, increasing in their arguments,  such that for any $M>1$ we have the following bounds in $E_M$:
\begin{enumerate}[label=\rm(\alph*)]
    \item \label{basecase:boundsOnBoundary} $ \inf|\nabla \hat Y_i| = \inf_{\tilde{\interface}}|\nabla \hat Y_i|, \quad  \sup|\nabla \hat Y_i| = \sup_{\tilde{\interface}}|\nabla \hat Y_i|$,
    \item \label{basecase:h} $h_0 \in \left[ \dfrac H {\sup | \nabla \hat Y_0|}, \dfrac H {\inf | \nabla \hat Y_0|} \right], \ h_1 \in \left[ \dfrac {1-H} {\sup | \nabla \hat Y_1|}, \dfrac{1- H} {\inf | \nabla \hat Y_1|} \right]$,
    \item \label{basecase:uv} $\lVert \hat u_i,\hat v_i \rVert_\gamma \leq f( \sup |\nabla \hat Y_i|)$, and $|q|$ can be bounded in terms of the $\omega_i$ only,
    \item \label{basecase:s} $|s_x+1|, \dfrac{1}{|s_x+1|} \leq  \dfrac { \sup |\nabla \hat Y_i |}{ \inf |\nabla \hat Y_i |}$,
    \item \label{basecase:chieta} 
    $(\chi_{ix}+1/k)^2+\eta_{ix}^2, \dfrac{1}{(\chi_{ix}+1/k)^2+\eta_{ix}^2} \leq f \left(\sup |\nabla \hat Y_i |,\dfrac 1{ \inf |\nabla \hat Y_i |}, \sup |s_x+1 |, \dfrac 1{\inf |s_x+1 |} \right)$.
\end{enumerate}

\begin{proof}

We begin by proving \ref{basecase:boundsOnBoundary}. Let $w$ be a function on $\T \times [-h_0,h_0]$, given by $ w(x,y) = \log(|\nabla \hat Y_0(x,|y|)|^2)$.
It can be shown straightforwardly using \eqref{eqn:largeampstream2:Ybot} that $w$ is twice differentiable even at $y=0$. From there, we see it is harmonic, and thus achieves its extrema at the boundary $y=\pm h_0$. Therefore, $|\nabla \hat Y_0|$ achieves its extrema on $\tilde \interface$.  Similarly reflecting $\hat Y_1$ over the line $y=h_0+h_1$ gives the result for  $|\nabla \hat Y_1|$.

We now prove \ref{basecase:h}.
By the intermediate value theorem, and the fact the interface has average depth $H$, there exists $x$ such that $\hat Y _0 (x,h_0) =H$.
By the mean value theorem, there exists $y$ such that $\hat{Y}_0(x,h_0)=h_0 \hat{Y}_{0y}(x,y)$.
Thus, \[h_0 \geq \frac H {\sup | \nabla \hat Y_0|}.\]

We know  that $(\hat X_0, \hat Y_0)$ has a conformal inverse $(\hat X_0^{-1}, \hat Y_0^{-1})$ by Lemma~\ref{lem:bijectiveInU}, which satisfies $\sup |\nabla ( \hat Y_0^{-1} ) | \inf |\nabla \hat Y_0| =1$.
Let $(X_{\mathrm{min}},Y_{\mathrm{min}}) \in \Gamma$ be a point of minimum height, i.e., $Y_{\mathrm{min}} \leq \hat Y_0(x,h_0)$ for all $x \in \T$.
This implies that the line segment between the points $(X_{\mathrm{min}},0)$ and $(X_{\mathrm{min}},Y_{\mathrm{min}})$ is contained in $\Omega_0$.
By the mean value theorem, there exists $Y_* \in (0,Y_{\mathrm{min}})$ such that $h_0=(\hat Y_0^{-1})_Y(X_{\mathrm{min}},Y_*) Y_{\mathrm{min}}$. Thus \[h_0 \leq Y_{\mathrm{min}} \sup |\nabla (\hat Y_0^{-1})|\leq \frac H {\inf|\nabla \hat Y_0|} .\]
The $h_1$ case follows similarly.

We now prove \ref{basecase:uv}.
By Lemma~\ref{lem:bijectiveInU}, $(\zeta,q)$ corresponds to a solution $(U,V)$ of \eqref{eqn:largeampstream}. It can be easily shown there exists a stream function $\Psi$, satisfying $\Psi_Y=U$, $\Psi_X=-V$, and $\Delta \Psi = \omega_0 \mathbf{1}_{\Omega_0}+\omega_1 \mathbf{1}_{\Omega_1}$, where $\mathbf{1}_A$ is the indicator function for a set $A$.
Pick $p>2/(1-\gamma)$. We see
 $ \omega_0 \mathbf{1}_{\Omega_0}+\omega_1 \mathbf{1}_{\Omega_1} \in L^p$, thus $\Psi \in W^{2,p}$, and so $U,V \in W^{1,p} \subseteq C^\gamma$.
In particular, there exists some constant $c$ depending only on the $\omega_i$ and our choice of $p$, such that $\lVert U,V \rVert_{\gamma} \leq c$.
The definition of $q$, \eqref{eqn:defineq}, gives that that $|q| \leq \sup| U|$.
Finally, we see that 
\[ \sup_{(x,y),(\tilde x,\tilde y) \in \mathcal{D}} \frac{|\hat u_i(x,y)-\hat u_i(\tilde x,\tilde y)|}{|(x,y)-(\tilde x,\tilde y)|^\gamma} \leq \sup_{(X,Y),(\tilde X,\tilde Y) \in \R \times [0,1]} \frac{|U(X,Y)-U(\tilde X,\tilde Y)|}{|(X,Y)-(\tilde X,\tilde Y)|^\gamma} \sup| \nabla \hat X_i, \nabla \hat Y_i |^\gamma , \]  and we have the required result.

We now prove \ref{basecase:s}.
Since $s_x+1$ is harmonic, it achieves its extrema on the boundary.
At $y=0$, $s$ is identically 0.
At $y=1$, we can write the continuity conditions \eqref{eqn:largeampstream2:Xcts} and \eqref{eqn:largeampstream2:Ycts} in terms of $\hat X_i$, $\hat Y_i$, and $S_i$, then differentiate with respect to $x$ to yield $\hat X_{0x}(x,h_0)=\hat X_{1x}(S_1(x,1))(s_x(x,1)+1)$ and similarly for $\hat Y_i$.
Squaring, summing and rearranging these gives  \begin{equation}\label{eq:sxOnBoundary}
		\frac{ \hat X_{0x}(x,h_0)^2+\hat Y_{0x}(x,h_0)^2}{\hat X_{1x}(S_1(x,1))^2+\hat Y_{1x}(S_1(x,1))^2}=(s_x(x,1)+1)^2.
	\end{equation}
This readily gives us the required upper and lower bounds for $s_x+1$.

Finally, we prove \ref{basecase:chieta}. This can be shown straightforwardly by applying the chain rule to the equations $X_i = \hat X_i \circ S_i$ and $Y_i = \hat Y_i \circ S_i$.
\end{proof}
\end{lem}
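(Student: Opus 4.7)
The plan is to handle the five bounds in the order listed, since each one feeds into the next. I would begin with part (a), which is the key geometric ingredient. Because $\hat X_i + i \hat Y_i$ is holomorphic by Lemma~\ref{lem:bijectiveInU}, $|\nabla \hat Y_i|^2 = |(\hat X_i + i \hat Y_i)'|^2$; in $\mathcal U$ this is everywhere positive, so $w_i := \log|\nabla \hat Y_i|^2$ is harmonic in the interior. The extrema thus sit on the boundary. To rule out the rigid walls, I would Schwarz-reflect $\hat Y_i$ across the appropriate wall (using $\hat Y_0 \equiv 0$ on $y=0$ and $\hat Y_1 \equiv 1$ on $y=h_0+h_1$) to produce a harmonic extension on a strip that contains $\tilde\interface$ strictly in its interior. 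The extended $w_i$ remains harmonic, so by the maximum principle its extrema cannot lie on $\tilde\interface$ unless $w_i$ is constant on that slice; in either case the $\sup$ and $\inf$ equal the $\sup$ and $\inf$ over $\tilde\interface$.

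Part (b) is a straightforward application of the mean value theorem. The area condition \eqref{eqn:largeampstream2:avgDepth} combined with the intermediate value theorem produces an $x_0$ with $\hat Y_0(x_0,h_0)=H$, and then MVT on $y \mapsto \hat Y_0(x_0,y)$ from $0$ to $h_0$ gives $h_0 = H/\hat Y_{0y}(x_0,y_\ast)$, yielding the lower bound $h_0 \ge H/\sup|\nabla \hat Y_0|$. For the upper bound I would switch to the inverse conformal map: pick a trough point $(X_{\min},Y_{\min})$ of $\Gamma$, note $Y_{\min}\le H$ by the area identity, observe that the vertical segment from $(X_{\min},0)$ to $(X_{\min},Y_{\min})$ lies in $\Omega_0$, and integrate its preimage under $(\hat X_0^{-1},\hat Y_0^{-1})$ to get $h_0 \le Y_{\min}\sup|\nabla(\hat Y_0^{-1})| = Y_{\min}/\inf|\nabla \hat Y_0|$. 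The upper layer is symmetric.

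For part (c) I would recover the physical stream function $\Psi$ on $\R\times(0,1)$, noting $\Delta\Psi = \omega_0\mathbf 1_{\Omega_0} + \omega_1 \mathbf 1_{\Omega_1} \in L^\infty$ depends only on the $\omega_i$. Standard periodic $W^{2,p}$ interior and boundary estimates then give $\|\nabla\Psi\|_{C^\gamma} \le c(\omega_0,\omega_1,p)$ for any $p > 2/(1-\gamma)$. Pulling back with the chain rule $\hat u_i = U\circ(\hat X_i,\hat Y_i)-q$, the Hölder seminorm picks up a factor $\sup|\nabla(\hat X_i,\hat Y_i)|^\gamma = (\sqrt 2\,\sup|\nabla\hat Y_i|)^\gamma$ (using Cauchy--Riemann), and $|q|\le\sup|U|$ comes directly from the definition \eqref{eqn:defineq}.

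Parts (d) and (e) are then consequences of the chain-rule identities arising from \eqref{eqn:introduceXhatYhat} combined with the preceding bounds. Differentiating $X_i = \hat X_i\circ S_i$, $Y_i = \hat Y_i\circ S_i$ in $x$ and using continuity \eqref{eqn:largeampstream2:Xcts}, \eqref{eqn:largeampstream2:Ycts} at the interface yields $(s_x+1)^2 = |\nabla\hat Y_0|^2/|\nabla\hat Y_1|^2$ on $\tilde\interface$; since $s_x+1$ is harmonic (from \eqref{eqn:largeampstream2:lap}) and equals $1$ on $y=0$ (from \eqref{eqn:largeampstream2:sTop}), the maximum principle together with (a) gives (d). The same chain rule gives $(\chi_{ix}+1/k)^2+\eta_{ix}^2 = |\nabla\hat Y_i|^2\circ S_i$ for $i=0$ and $= |\nabla\hat Y_1|^2\circ S_1\cdot(s_x+1)^2$ for $i=1$, and (e) follows by combining (a) and (d). The main obstacle in all of this is part (a): one must be sure the Schwarz reflection is compatible with the boundary conditions and that the relevant quantities extend harmonically across the walls, which relies crucially on $\hat Y_i$ being constant there.
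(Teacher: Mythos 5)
Your proposal follows essentially the same route as the paper for all five parts, and (b)--(e) are correct and match the paper's reasoning closely (MVT/IVT plus the inverse conformal map for (b); $L^p$ elliptic theory for the stream function plus the chain rule for (c); harmonicity of $s_x+1$ combined with the interface identity and part (a) for (d); and the chain-rule identity for (e)).

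There is, however, a genuine confusion in your write-up of part (a). You reflect $\hat Y_0$ across the wall $y=0$ (using $\hat Y_0\equiv 0$ there), which extends the domain to $\T\times(-h_0,h_0)$. But that extended strip has $\tilde\interface=\{y=h_0\}$ as part of its \emph{boundary}, not ``strictly in its interior'' as you claim. Consequently, the maximum principle should be used the other way around: on $\T\times(-h_0,h_0)$ the harmonic function $w$ achieves its extrema on $\partial(\T\times(-h_0,h_0))=\tilde\interface\cup(-\tilde\interface)$, and by the evenness of the reflected $w$ this is the same as saying the extrema are achieved \emph{on} $\tilde\interface$. Your statement that the extrema ``cannot lie on $\tilde\interface$ unless $w_i$ is constant'' is therefore the opposite of what the argument needs; it appears you may have swapped the roles of the wall and the interface in your head. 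The conclusion you write down is correct, and the underlying idea of Schwarz reflecting across the wall to eliminate that boundary component is exactly what the paper does (the paper phrases it directly in terms of the even extension $w(x,y)=\log|\nabla\hat Y_0(x,|y|)|^2$), so this is a localized exposition error rather than a flaw in the approach. Just rewrite the two offending sentences so that the harmonic extension lives on $\T\times(-h_0,h_0)$ with boundary $y=\pm h_0$, and conclude that the extrema are attained there, i.e.\ on $\tilde\interface$.
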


We now move on to the inductive part.
\begin{lem}\label{lem:inductiveStep}
There exist various functions which we refer to interchangeably using $f$, increasing in their arguments,  such that for any solution of \eqref{eqn:largeampstream2}, and any integer $\ell \geq 0$, we have the following bounds:
\begin{enumerate}[label=\rm(\alph*)]
    \item \label{inductivestep:XhatYhat}  $ \lVert \hat{Y}_i \rVert_{ \ell+1 + \gamma} \leq f\Big(\lVert \hat{u}_i , \hat{v}_i \rVert_{\ell+\gamma},\dfrac 1{\inf_{\interface} |u,v|} ,   \ell\Big)$.
    \item \label{inductivestep:s} $\lVert s \rVert_{\ell+1+\gamma} \leq f\Big(\lVert \hat Y_i \rVert_{\ell+1+\gamma}, \lVert s \rVert_{\ell+\gamma}, \dfrac 1{\inf |\nabla \hat Y_i |^2}, \ell\Big)$.
    \item \label{inductivestep:chieta} $\lVert \chi_i, \eta_i \rVert_{ \ell +1+ \gamma} \leq f\Big(\lVert \hat{Y}_i  , s\rVert_{\ell+1+\gamma}, h_i, \dfrac1{h_i},\ell\Big)$.
    \item \label{inductivestep:uv} $\lVert  {u}_i ,  {v}_i \rVert_{ \ell +1+ \gamma} \leq f\Big(\lVert  \hat Y_i, {\chi}_i  ,  {\eta}_i, s \rVert_{\ell+1+\gamma}, \dfrac 1{\inf |\nabla \hat Y_1|}, \dfrac 1{\inf|s_x+1|},  h_i, \dfrac1{h_i}, \ell\Big)$.
    \item \label{inductivestep:uhatvhat} $\lVert \hat{u}_i, \hat{v}_i \rVert_{ \ell +1+ \gamma} \leq f\Big(\lVert \hat{u}_i,\hat{v}_i  \rVert_{\ell+\gamma},\lVert u_i, v_i \rVert_{\ell+1+\gamma},\lVert s \rVert_{\ell+1+\gamma}, \dfrac 1{\inf|s_x+1|},  h_i,\dfrac1{h_i}, \ell\Big)$.
\end{enumerate}
\begin{rk}
    The right-hand side of each estimate depends only on quantities which can either be bounded by a previous estimate or by Lemma~\ref{lem:inductiveBaseCase}, or else involve a norm with one fewer degrees of regularity. 
    Furthermore, once \ref{inductivestep:uhatvhat} has been shown to hold, we can then apply the first estimate with an extra degree of regularity, and proceed inductively.
\end{rk}

\begin{proof}[Proof of Lemma~\ref{lem:inductiveStep}]
We first prove \ref{inductivestep:XhatYhat}.    
Both cases $i=0,1$ follow the same argument, so we suppress the dependence on $i$.
We see that $\hat Y$ satisfies
\begin{align*}
    \Delta \hat Y(x,y) = 0, \quad \hat{Y}_x(x,0)=0, \quad \hat{u}(x,1)\hat{Y}_x(x,1)-\hat{v}(x,1)\hat{Y}_y(x,1)=0.
\end{align*}
Thinking of $\hat u , \hat v $ as fixed coefficient functions, this is a linear elliptic system of equations. The Lopatinskii constant is given by $\frac 12 \sqrt{\hat u(x,h_0) ^2 +\hat v(x,h_0) ^2 }$ on $\tilde \interface$, and $\frac 12$ at $y=0$,
so \cite[Theorem~9.3]{ADN} gives us the required result.

We now prove \ref{inductivestep:s}. Note, for the $\ell = 0$ case, $C^\gamma$ control on $s$ is attained from the $C^1$ control shown in estimate~\ref{basecase:s} in Lemma~\ref{lem:inductiveBaseCase}.
    Let \[g(a,b,c,d)=\sqrt{\frac{a^2+b^2}{c^2+d^2}}-1.\] By \eqref{eq:sxOnBoundary} we have that on $\interface$,
\[s_x=g(\hat X_{0x} \circ S_0,\hat Y_{0x} \circ S_0,\hat X_{1x} \circ S_1,\hat Y_{1x} \circ S_1).\]
The set $\R^4 \supseteq E=\{ a^2+b^2+c^2+d^2 \leq 2M, a^2+b^2 \geq M^{-1}, c^2+d^2 \geq M^{-1} \} $ is compact, and $g$ is smooth on this set, so $g$ has finite $C^n$ norm for all $n$. This bound depends on $M$ and $n$.
Hence,
\begin{align*}
    \lVert s \rVert_{C^{\ell+1+\gamma}(\interface)} &\leq \lVert s \rVert_{C^0(\interface)} + \lVert s_x \rVert_{C^{\ell+\gamma}(\interface)}\\
    &= \lVert s \rVert_{C^0(\interface)} + \lVert g(\hat Y_{0y} \circ S_0,\hat Y_{0x} \circ S_0,\hat Y_{1y} \circ S_1,\hat Y_{1x} \circ S_1) \rVert_{C^{\ell+\gamma}(\interface)}\\
    &\leq f( \lVert g \rVert_{C^{\ell  + \gamma}(E)},\lVert \nabla \hat Y_{i}  \rVert_{C^{\ell  + \gamma}(\tilde \interface)}, \lVert D S_i  \rVert_{C^{\ell -1  + \gamma}( \interface)}).
\end{align*}
This yields a bound of the required form which holds on $\interface$. We extend it to the whole domain using \cite[Theorem~8.33]{GilbargTrudinger:book} for the $C^{1,\gamma}$ norm, and \cite[Theorem~9.3]{ADN} for higher order norms.

We now prove \ref{inductivestep:chieta}. The bound on $\eta_i$ can be shown straightforwardly from the fact that $\eta_i = \hat Y_i \circ S_i$. 
The bound on $\chi_i$ is a little harder, as the $\hat X_i$ are unbounded. 
For $x \in [0,2\pi]$, we see
\begin{align*}
    |\chi_1(x,y)| &= \left| \int_0^x \partial_t \big(\hat X_1 ( S_1(t,y)) - t/k \big) \ dt \right|\\
    &= \left|\int_0^x(s_x(t,y)+1)\hat Y_{1y}(S_1(t,y)) + 1/k \ dt \right|\\
    &\leq 2\pi ( \sup |s_x+1| \sup|\nabla \hat Y_1| +1/k),
\end{align*}
yielding a $C^0$ bound on $\chi_1$.
A $C^0$ bound on $\chi_0$ follows similarly but more simply.
Bounding the seminorm part of $\lVert\chi_i\rVert_{\ell+1+\alpha}$ can be done by differentiating \eqref{eqn:introduceXhatYhat:0} and \eqref{eqn:introduceChiEta:Chi}, as any derivative of $\hat X_i$ can be written in terms of $\hat Y_i$, any derivative of $S_i$ can be written in terms of $s$ and the $h_i$, and any derivative of $x/k$ is a constant, or 0.
This gives our required bound.

We now prove \ref{inductivestep:uv}.
Thinking of $\chi_i, \eta_i$ as fixed coefficient functions, the fluid equations \eqref{eqn:largeampstream2:lowerdivfree}, \eqref{eqn:largeampstream2:lowervort}, \eqref{eqn:largeampstream2:upperdivfree}, \eqref{eqn:largeampstream2:uppervort} with the kinematic boundary conditions \eqref{eqn:largeampstream2:kinbot}, \eqref{eqn:largeampstream2:kintop}, and the continuity of $u$ \eqref{eqn:largeampstream2:ucts} and $v$ \eqref{eqn:largeampstream2:vcts} is a linear elliptic system of equations.
As in the proof of Lemma~\ref{prop:Lopatinskii}, the linear system is written in the form of a matrix, the principal part is taken (which in this case changes nothing), we convert to Fourier variables, and call this matrix $\hat A$. Explicitly,
    \[\hat A = \begin{pmatrix}
        h_0  \xi &  \nu & 0 & 0 \\
         \nu & -h_0  \xi & 0 & 0\\
        0 & 0 & -k \eta_{1x}  \nu - \frac{((k \chi_{1x} +1) h_1 - k \eta_{1x} s_y)  \xi}{s_x + 1} & (k \chi_{1x} +1)  \nu - \frac{ ((k \chi_{1x} +1) s_y + k \eta_{1x} h_1)  \xi}{s_x + 1} \\
        0 & 0 & (k \chi_{1x} +1)  \nu - \frac{((k \chi_{1x} +1) s_y +k \eta_{1x} h_1)  \xi}{s_x + 1} & k \eta_{1x}  \nu + \frac{((k \chi_{1x} +1) h_1 - k\eta_{1x} s_y)  \xi}{s_x + 1}
    \end{pmatrix}.
\]
    Notice $y$ derivatives of $\chi_i$ and $\eta_i$ are eliminated in favour of $x$ derivatives, using \eqref{eqn:largeampstream2:upperCR1} and \eqref{eqn:largeampstream2:upperCR2}. 
    We see that
    \begin{align*}
        \det(\hat A) &= \frac{  (h_0^{2} \xi^{2}+\nu^2) ((k\chi_{1x}+1)^{2}+k^2 \eta_{1x}^{2})\left( h_1 ^2 \xi^2+(\nu  (s_x+1)-s_y\xi)^2\right)}{(s_x+1)^2}\\
        &=k^2 (h_0^{2} \xi^{2}+\nu^2)(\hat X_{1x}^2+\hat Y_{1x}^{2}) \left( h_1 ^2 \xi^2+(\nu  (s_x+1)-s_y\xi)^2\right) .
    \end{align*}
    We need upper and lower bounds on this determinant when $\xi^2+\nu^2=1$. 
    A suitable upper bound is $k^2(1+h_0^2)\lVert \hat Y_1 \rVert_{1+\gamma}^2(h_1^2+(1+\lVert s \rVert_{1+\gamma})^2 ) $.
    We now seek a lower bound, and using the same argument we used to bound \eqref{eqn:bigDet} bounded below by \eqref{eqn:bigDetLowerBound}, we see that
    \begin{align*}
    \det(\hat A) &\geq    \frac{2 k^2 \min(1,h_0^2)  h_1^2(\hat X_{1x}^2+\hat Y_{1x}^{2}) }{h_1^2+(s_x+1)^2+s_y^2 +\sqrt{(h_1^2-(s_x+1)^2+s_y^2)^2+ 4(s_x+1)^2 s_y^2 }} \\
    &\geq  \frac{2 k^2 \min(1,h_0^2)  h_1^2 \inf|\nabla \hat Y_1|^2}{h_1^2+(1+\lVert s \rVert_{C^{1+\gamma}})^2 +\sqrt{(h_1^2+\lVert s \rVert^2_{C^{1+\gamma}})^2+ 4(1+\lVert s \rVert_{C^{1+\gamma}})^2 \lVert s \rVert_{C^{1+\gamma}}^2 }}.
    \end{align*}
    The Lopatinskii constant at the wall is given by $ (4k|s_x+1| |\nabla \hat Y_i|)^{-1}$, and at the interface by  $(2k|s_x+1| |\nabla \hat Y_i|)^{-1}$.
    Therefore \cite[Theorem~9.3]{ADN} gives us the required result.

    Finally, we prove \ref{inductivestep:uhatvhat}.
    This is straightforward for $i=0$, so we focus on the $i=1$ case and suppress dependence on $i$ in the notation.
    Since $\hat u$, $\hat v$ are harmonic, applying  \cite[Theorem~8.33]{GilbargTrudinger:book} or \cite[Theorem~9.3]{ADN}, allows us to reduce to considering the case on the boundary.
    This means we need only consider derivatives in $x$. We see
    \begin{align}\label{eq:hatremoving}
        \partial_x^{\ell+1} u = \partial_x^{\ell+1} (\hat u \circ S) = ((\partial_x^{\ell+1} \hat u) \circ S ) \cdot (s_x+1)^{\ell+1} + p( (D^{\leq \ell}_x \hat u ) \circ S, D^{\leq \ell+1}_x S),
    \end{align}
    where $p$ is some polynomial, and $D^{\leq n}_x \varphi$ is the first $n$ derivatives in $x$ of a function $\varphi$.
    Furthermore,
    \begin{align*}
        \lVert \hat u\rVert_{\ell+1+\gamma,\hat{\mathcal{T}}} &= \sup_{\hat{\mathcal{T}}}|\hat u|+ \sup_{x, \tilde x \in \T} \frac {|\partial_x^{\ell+1} \hat u(x,1) - \partial_x^{\ell+1} \hat u( \tilde x,  1)| }{ \left| x- \tilde x\right|^{\gamma}}\\
        &=\sup_{\mathcal{T}}| u|+\sup_{x, \tilde x \in \T } \frac {|\partial_x^{\ell+1} \hat u(S(x,h_0)) - \partial_x^{\ell+1} \hat u(S( \tilde x, h_0))| }{ \left|x-\tilde x+ s(x,h_0)- s(\tilde x,h_0)\right|^{\gamma}}\\
        &\leq \sup_{\mathcal{T}}| u|+ \frac {1}{\inf |s_x+1|^\gamma} \lVert(\partial_x^{\ell+1} \hat u) \circ S\rVert_{\gamma,\mathcal{T}}\\
        &=\sup_{\mathcal{T}}| u|+\frac {1}{\inf |s_x+1|^\gamma} \left\lVert \frac{\partial_x^{\ell+1}  u-p( (D^{\leq \ell}_x \hat u ) \circ S, D^{\leq \ell+1}_x S)}{(s_x+1)^{\ell+1}} \right\rVert_{\gamma,\mathcal{T}},
    \end{align*}
    where the last line uses \eqref{eq:hatremoving}. 
    This gives the claimed bound for $y=1$. 
    The case $y=0$ follows similarly, so we can extend the claimed bound into the whole domain.
\end{proof}
\end{lem}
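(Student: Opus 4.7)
The plan is a simultaneous induction on $\ell \geq 0$: for each $\ell$ I would prove estimates \ref{inductivestep:XhatYhat}--\ref{inductivestep:uhatvhat} in the listed order; the remark's observation ensures that each estimate's hypothesis quantities have already been controlled by Lemma~\ref{lem:inductiveBaseCase}, by an earlier estimate in the current step, or by the previous induction step. The base case $\ell=0$ seeds $C^{0,\gamma}$ control on $\hat u_i, \hat v_i$ through Lemma~\ref{lem:inductiveBaseCase}\ref{basecase:uv}, and once \ref{inductivestep:uhatvhat} is established at level $\ell$ the loop closes: its output feeds directly into \ref{inductivestep:XhatYhat} at level $\ell+1$.

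The core of the argument consists of two elliptic boundary value problems. For \ref{inductivestep:XhatYhat}, I would view $\hat Y_i$ as harmonic (being the imaginary part of a holomorphic mapping from a strip), equal to a constant on the rigid wall, and satisfying the scalar linear oblique-derivative condition $\hat u_i \hat Y_{ix} - \hat v_i \hat Y_{iy}=0$ on $\hat\interface$; treating $(\hat u_i, \hat v_i)$ as given coefficients, the Lopatinskii constant at the interface is proportional to $\sqrt{\hat u_i^2+\hat v_i^2}=\sqrt{u_i^2+v_i^2}$, which is bounded below by the no-stagnation hypothesis, so \cite[Theorem~9.3]{ADN} delivers the estimate. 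For \ref{inductivestep:uv}, with $\chi_i,\eta_i,s$ regarded as coefficients, the divergence-free and vorticity equations together with the wall kinematic conditions and the continuity of $(u,v)$ at the interface form a linear elliptic system for $(u_0,v_0,u_1,v_1)$; its principal-symbol determinant factors in the manner of Proposition~\ref{prop:Lopatinskii}, and uniform ellipticity and uniform Lopatinskii constants follow from the lower bounds on $|\nabla \hat Y_i|$ and $|s_x+1|$ already in hand.

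The three remaining estimates are derived from these. Estimate~\ref{inductivestep:s} combines the algebraic relation \eqref{eq:sxOnBoundary}, which expresses $s_x|_\interface$ as a smooth function of the interfacial traces of $\nabla \hat Y_i$, with the fact that $s$ is harmonic and vanishes on the wall; integrating in $x$ supplies Dirichlet data for $s$ on $\hat\interface$ of the claimed regularity, and harmonic-function estimates~\cite[Theorem~8.33]{GilbargTrudinger:book} (or \cite[Theorem~9.3]{ADN} for $\ell\geq 1$) complete the bound. Estimate~\ref{inductivestep:chieta} is a pure composition argument via $\eta_i=\hat Y_i\circ S_i$ and $\chi_i=\hat X_i\circ S_i-x/k$; the only subtlety is the non-periodicity of $\hat X_i$, but $\partial_x\chi_i$ is periodic and expressible through $\hat Y_i$ via the Cauchy--Riemann equations, so integrating from $0$ yields a uniform $C^0$ bound on $\chi_i$ and higher derivatives reduce to those of $\hat Y_i$ and $s$. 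Estimate~\ref{inductivestep:uhatvhat} inverts the coordinate change: since $\hat u_i=u_i\circ S_i^{-1}$ and $S_i^{-1}$ has the regularity of $s$ and is a diffeomorphism thanks to the lower bound on $|s_x+1|$, a chain-rule identity on the boundary combined with interior harmonicity of $\hat u_i,\hat v_i$ (up to an explicit linear correction, since the complex velocity $w=U-iV$ satisfies $\bar\partial w=i\omega_i/2$ so that $U-\omega_iY/2$ and $V+\omega_iX/2$ are harmonic conjugates, and conformal composition preserves harmonicity) reduces the bound to the boundary trace.

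The main obstacles I anticipate are (i) verifying the Lopatinskii conditions uniformly across $E_M$ so that the Schauder constants in each step depend only on the quantities listed in the right-hand side of the estimate, and (ii) keeping the dependency graph genuinely acyclic — in particular ensuring that no step secretly requires $\hat u_i,\hat v_i$ at regularity $\ell+1+\gamma$, since that is only delivered by \ref{inductivestep:uhatvhat} at the very end of the cycle.
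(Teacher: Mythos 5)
Your proposal follows the paper's proof essentially step for step: the same elliptic boundary value problems for $\hat Y_i$ and for $(u_i,v_i)$ with the same Lopatinskii-constant bookkeeping, the same use of \eqref{eq:sxOnBoundary} for $s$, the same composition argument for $(\chi_i,\eta_i)$, and the same chain-rule/trace argument for $(\hat u_i,\hat v_i)$, with the induction closing exactly as you describe. One small point worth correcting: for estimate~\ref{inductivestep:uhatvhat} no ``linear correction'' is needed, since $U$ and $V$ are each already harmonic in every constant-vorticity layer (from $U_X+V_Y=0$ and $U_Y-V_X=\omega_i$ one gets $\Delta U=\Delta V=0$ directly), and hence so are their conformal pullbacks $\hat u_i,\hat v_i$; the correction $U-\tfrac{\omega_i}{2}Y$, $V+\tfrac{\omega_i}{2}X$ you mention is what one would need to make the complex velocity holomorphic, which is a stronger and unnecessary requirement here.
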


\begin{cor}\label{cor:BestBreakdownImpliesAnyRegularity}
    Let $\ell \ge 1$ be an integer, $\gamma \in (0,1)$, and $M > 1$.
    There exists $f(\ell,M) > 0$ such that for all $(\zeta,q) \in E_M$, we have $ \lVert (\zeta,q) \rVert_{\ell+\gamma} \leq f(\ell,M)$.
\end{cor}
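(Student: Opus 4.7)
The plan is to prove this by induction on $\ell$, using Lemma~\ref{lem:inductiveBaseCase} for the base step and cycling through the five estimates (a)--(e) of Lemma~\ref{lem:inductiveStep} to advance one unit of regularity at a time. The key observation is that the three quantities appearing in the definition \eqref{eqn:bestBreakdownConditions} of $E_M$ --- namely $\inf_{\tilde\interface}|\nabla \hat Y_i|$, $\sup_{\tilde\interface}|\nabla \hat Y_i|$, and $\inf_\interface((u_0+q)^2+v_0^2)$ --- are precisely the quantities that seed Lemma~\ref{lem:inductiveBaseCase}, and every auxiliary quantity needed in the inductive step (bounds on $h_i$, $1/h_i$, $1/\inf|s_x+1|$, etc.) is already controlled by $M$ once the base case is established.

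For the base step, estimate \ref{basecase:boundsOnBoundary} of Lemma~\ref{lem:inductiveBaseCase} upgrades the boundary gradient bounds to global ones using harmonicity of $\log|\nabla\hat Y_i|^2$. Estimates \ref{basecase:h}--\ref{basecase:chieta} then successively yield bounds (in terms of $M$) on $h_i$ and $1/h_i$, on $\lVert \hat u_i, \hat v_i\rVert_\gamma$ and $|q|$, on $|s_x+1|$ and $1/|s_x+1|$, and on $(\chi_{ix}+1/k)^2+\eta_{ix}^2$ and its reciprocal. Together with the $E_M$-bounds themselves, this gives a bound of the required form in the case $\ell=0$, modulo the fact that not every component of $\zeta$ is yet controlled in $C^{1+\gamma}$; this is handled by running one full cycle of the inductive step below at level $\ell=0$.

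For the inductive step, suppose for some integer $\ell\ge 0$ we have a bound of the form $\lVert \hat u_i, \hat v_i\rVert_{\ell+\gamma},\ \lVert s\rVert_{\ell+\gamma},\ \lVert\chi_i,\eta_i, u_i, v_i\rVert_{\ell+\gamma}\le f(\ell,M)$, together with the scalar bounds on $h_i, 1/h_i$, $\inf|\nabla\hat Y_i|, \inf|s_x+1|, \inf_{\interface}|(u,v)|$ established above. I would apply Lemma~\ref{lem:inductiveStep} in the stated order:
\begin{enumerate}[label=(\roman*)]
\item Part \ref{inductivestep:XhatYhat} (with the lower bound on $|(u,v)|$ on $\interface$ coming from $E_M$) yields $\lVert \hat Y_i\rVert_{\ell+1+\gamma}\le f(\ell,M)$.
\item Part \ref{inductivestep:s} (using the lower bound on $|\nabla\hat Y_i|$ from $E_M$) yields $\lVert s\rVert_{\ell+1+\gamma}\le f(\ell,M)$.
\item Part \ref{inductivestep:chieta} (using the bounds on $h_i,1/h_i$) yields $\lVert \chi_i,\eta_i\rVert_{\ell+1+\gamma}\le f(\ell,M)$.
\item Part \ref{inductivestep:uv} (using the lower bounds on $|\nabla \hat Y_1|$ and $|s_x+1|$ from the base case) yields $\lVert u_i,v_i\rVert_{\ell+1+\gamma}\le f(\ell,M)$.
\item Part \ref{inductivestep:uhatvhat} finally upgrades $\lVert \hat u_i,\hat v_i\rVert_{\ell+\gamma}$ to $\lVert \hat u_i,\hat v_i\rVert_{\ell+1+\gamma}\le f(\ell,M)$,
\end{enumerate}
and the induction closes. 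Combining the bound on the functional part $\zeta^\circ$ with the scalar bounds on $h_0,h_1,q$ gives $\lVert(\zeta,q)\rVert_{\ell+\gamma}\le f(\ell,M)$ for every $\ell\ge 1$.

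There is no real obstacle here beyond careful bookkeeping: the content is entirely packaged into the preceding two lemmas, and the only thing to check is the \emph{dependency graph}, namely that each quantity appearing on the right-hand side of an inductive estimate has already been bounded by $M$ at the previous stage (either by the base lemma or by an earlier substep). The $E_M$ conditions are exactly calibrated so that this graph closes; weakening any of the three bounds in \eqref{eqn:bestBreakdownConditions} would break the chain at step (i) or (ii), which is precisely why the breakdown alternative \eqref{eqn:finalConclusion} in Theorem~\ref{thm:largeampmain} is phrased in terms of these three quantities.
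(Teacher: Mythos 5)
Your proof is correct and is essentially the argument the paper intends: the Remark following Lemma~\ref{lem:inductiveStep} states precisely this dependency-graph observation, and the corollary is presented there without further proof because it follows by exactly the induction you spell out, seeded by Lemma~\ref{lem:inductiveBaseCase} and the three $E_M$ bounds. Your bookkeeping of which of the three $E_M$ quantities feeds which step, and your note that one cycle of (a)--(e) at level $\ell=0$ is needed to upgrade the base-case control to $C^{1+\gamma}$, match the structure the paper relies on.
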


\begin{prop}\label{prop:EMcompact}
    The set $E_M$ is compact.
\begin{proof}
    Arguing as in the proof of Lemma~\ref{lem:closedBddImpliesBestBreakdown}, $E_M$ is the intersection of pre-images of closed sets in $\R$ under continuous maps, thus is itself closed.
    Furthermore, Corollary~\ref{cor:BestBreakdownImpliesAnyRegularity} implies that $E_M$ is bounded with respect to the $C^{3,\gamma}$ norm. 
    The Arzel\`a--Ascoli theorem then implies that $E_M$ is relatively compact with respect to the $C^{2,\gamma}$ norm. 
    Therefore $E_M$ is compact.
\end{proof}
\end{prop}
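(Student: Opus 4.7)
The plan is to combine the uniform higher-regularity estimate of Corollary~\ref{cor:BestBreakdownImpliesAnyRegularity} with Arzel\`a--Ascoli on the one hand, and a closedness argument on the other. First, applying Corollary~\ref{cor:BestBreakdownImpliesAnyRegularity} with $\ell = 3$ produces a uniform bound $\lVert(\zeta,q)\rVert_{3+\gamma} \le f(3,M)$ for every $(\zeta,q) \in E_M$. The embedding $C^{3,\gamma}(\overline{\mathcal{D}}) \hookrightarrow C^{2,\gamma}(\overline{\mathcal{D}})$ is compact, so up to extraction any sequence in $E_M$ converges in the topology of $\mathcal{X} \times \R$ to some limit $(\zeta,q)$. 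Since $\mathcal{F}$ is $\R$-analytic, hence continuous, between the relevant Hölder spaces, we have $\mathcal{F}(\zeta,q) = 0$.

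Next I would verify closedness. Each of the three functionals entering \eqref{eqn:bestBreakdownConditions}, namely $(\zeta,q) \mapsto \inf_{\tilde{\mathcal{T}}}|\nabla \hat Y_i|^2$, $(\zeta,q) \mapsto \sup_{\tilde{\mathcal{T}}}|\nabla \hat Y_i|^2$, and $(\zeta,q) \mapsto \inf_{\mathcal{T}}((u_0+q)^2+v_0^2)$, is continuous on $\mathcal{U} \times \R$ with respect to the $\mathcal{X} \times \R$ topology: as in the proof of Lemma~\ref{lem:closedBddImpliesBestBreakdown}, one uses the chain-rule identities $|(\nabla \hat Y_0)\circ S_0|^2 = X_{0x}^2 + Y_{0x}^2$ and $(s_x+1)^2|(\nabla \hat Y_1)\circ S_1|^2 = X_{1x}^2 + Y_{1x}^2$ to reduce these to continuous operations on $\zeta$. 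Consequently the three defining conditions of $E_M$ cut out a closed subset of $\mathcal{U}$.

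The main subtlety is that a limit of points in $E_M \subseteq \mathcal{U}$ need not a priori lie in the open set $\mathcal{U}$, and the functionals above are only defined there. I would handle this by observing that the $E_M$ bounds together with $\mathcal{F} = 0$ already enforce each defining condition of $\mathcal{U}$ with a uniform positive margin: Lemma~\ref{lem:inductiveBaseCase}\ref{basecase:h}--\ref{basecase:chieta} gives $h_i \ge H_*/\sqrt{M}$ (for an appropriate $H_* > 0$), $s_x + 1$ uniformly bounded away from $0$, and $(\chi_{ix} + 1/k)^2 + \eta_{ix}^2$ uniformly bounded below, all in terms of $M$ alone. The wall-avoidance \eqref{eqn:defU:wallIntersect} and the self-avoidance \eqref{eqn:defU:selfIntersect} are then secured by the upper bound on $\sup|\nabla \hat Y_i|$: were the interface to touch a wall or itself along a limiting configuration, Lemma~\ref{lem:bijectiveInU} together with standard boundary behaviour of conformal mappings onto non-Jordan domains would force $\sup|\nabla \hat Y_i|$ to blow up, contradicting \eqref{eqn:bestBreakdownConditions}. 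This geometric step is the place where most of the work sits, and where I expect to need the most care.

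Combining the pieces yields the conclusion: an arbitrary sequence in $E_M$ admits a $C^{2,\gamma}$-convergent subsequence by the Arzel\`a--Ascoli step; the uniform margins above place the limit inside $\mathcal{U}$; continuity of $\mathcal{F}$ and of the three defining functionals preserves the equation $\mathcal{F} = 0$ and the inequalities \eqref{eqn:bestBreakdownConditions} in the limit; hence the limit lies in $E_M$. Therefore $E_M$ is sequentially compact in the Banach space $\mathcal{X} \times \R$, and so compact.
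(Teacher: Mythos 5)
Your proposal follows the same overall route as the paper's proof: Arzel\`a--Ascoli via the $C^{3,\gamma}$ bound of Corollary~\ref{cor:BestBreakdownImpliesAnyRegularity}, combined with a closedness argument. Where you go beyond the paper is in explicitly flagging and attempting to resolve the fact that $\mathcal{U}$ is \emph{open}, so that a $C^{2,\gamma}$-limit of a sequence in $E_M$ is not automatically in $\mathcal{U}$, and hence not automatically in $E_M$. The paper's one-line claim that $E_M$ ``is the intersection of pre-images of closed sets \ldots thus is itself closed'' only directly gives closedness relative to $\mathcal{U}$, so your concern is legitimate, and your strategy of showing that the $E_M$ bounds enforce the defining conditions of $\mathcal{U}$ with a uniform positive margin is the right way to close this.

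Your margin arguments for \eqref{eqn:defU:h}--\eqref{eqn:defU:gradX} are exactly what Lemma~\ref{lem:inductiveBaseCase}\ref{basecase:h}--\ref{basecase:chieta} supply, and the wall-avoidance bound in \eqref{eqn:defU:wallIntersect} does fall out of the proof of Lemma~\ref{lem:inductiveBaseCase}\ref{basecase:h}: that proof establishes $Y_{\mathrm{min}} \ge h_0 \inf|\nabla \hat Y_0| \ge H/M$ and the mirror bound near $y=1$, so a uniform margin is already available without any new conformal-mapping input. The place where your sketch is genuinely incomplete is \eqref{eqn:defU:selfIntersect}: the appeal to ``standard boundary behaviour of conformal mappings onto non-Jordan domains'' is not a uniform quantitative statement, and as written it also mislabels which bound fails (a pinching domain tends to make $\inf|\nabla \hat Y_i|$ small rather than $\sup|\nabla \hat Y_i|$ large, though either violation of \eqref{eqn:bestBreakdownConditions} would do). Turning that intuition into a uniform lower bound on $|\sigma(a,b)|$ over $E_M$ needs a quantitative modulus/harmonic-measure or inner-distance argument, which neither your proposal nor the paper's proof spells out; you were right to identify it as the crux.
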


We now have everything we need to prove our main result. 
\begin{proof}[Proof of Theorem~\ref{thm:largeampmain}] 
By Theorem~\ref{thm:buffoniToland} we have a global solution curve $(Z,Q)$ which satisfies either alternative~\ref{alternative:blowup} or \ref{alternative:loop}.
Corollary~\ref{cor:noLoop} then gives that in fact alternative~\ref{alternative:loop} cannot occur, and so $(Z,Q)$ must eventually forever leave any compact set.
Proposition~\ref{prop:EMcompact} shows that $E_M$ -- the set on which the left hand side of \eqref{eqn:finalConclusion} is bounded below -- is compact. 
Therefore, since $(Z,Q)$ will forever leave any $E_M$, the limit \eqref{eqn:finalConclusion} must hold.
Finally, the claimed monotonicity of the solutions follows from Corollary~\ref{cor:noLoop} and the kinematic boundary condition \eqref{eqn:largeampstream:kinint}.
\end{proof}

\section{Numerical solutions}\label{sec:numerics} 
Having proved the existence of large-amplitude solutions, we now numerically compute solution branches to explore their qualitative behaviour. We begin by reformulating the problem as non-local equations confined to the a-priori unknown interface. A numerical procedure to then discretise and solve this system is presented. Typical solution behaviours are shown, with particular focus given to how solution branches terminate. 

\subsection{Numerical method}\label{sec:numerics:reformulation}

We restate the problem in a way that is more amenable to numerical methods.
Recall the non-dimensionalisation from Section~\ref{sec:intro:setup} that gives a channel height of unity, and likewise a vorticity jump across the interface of unity as well.
Let the interface be parametrised by the curve
$\eta(t)$, where as before, $\eta$ is non-self-intersecting, and does not touch the walls.
We use $X(t)$ and $Y(t)$ to refer to the horizontal and vertical components of $\eta$ respectively.
Note, these are unrelated to the coordinates $X_i(x,y)$ and $Y_i(x,y)$ introduced in Section~\ref{sec:largeampprelims:reformulation}, which we do not refer to again in this section.
Let $\eta$ have period $2\pi$ in $t$, and its image have period $2\pi/k$ horizontally, where we choose the value of $k$.
In other words, for all $t$ we have
\[(X(t+2\pi),Y(t+2\pi))=(X(t)+2\pi/k,Y(t)).\]
Let it also be even, in that $(X(-t),Y(-t))=(-X(t),Y(t))$.
This implies that $X(0)=0$, and $X(\pi)=-X(-\pi)=\pi/k$. 
We also insist that $|(X', Y')|$ is constant.
This curve gives us 2 regions:
\begin{align*}
    \Omega_0&=\{ (x,y) \text{ below } \eta, \ y>0 \}\\
    \Omega_1&=\{ (x,y) \text{ above } \eta, \ y<1 \},
\end{align*}
as we have in Section~\ref{sec:intro:setup}.
Recall that $H$ is the average height of the curve, in that the area of $\Omega_0 \cap \{(x,y) \mid x \in (0,\pi/k) \}$ is $\pi H / k$.
Recall also that $\omega_0$ and $\omega_1$ are constants satisfying $\omega_0-\omega_1=1$. 

We can now restate \eqref{eqn:largeampstream} in terms of a stream function $\Psi$.
Given three parameters $k$, $H$, and $\omega_0$, we seek a curve $(X(t),Y(t))$ and $\Psi \in C^1(\overline{\R\times(0,1)}) \cap C^2(\overline{\Omega_0}) \cap C^2(\overline{\Omega_1})$ satisfying
\begin{subequations}\label{eqn:largeampstream3}
\begin{align}
    \label{eqn:largeampstream3:vort}\Delta \Psi &= \begin{cases}
\omega_0 & \text{in } \Omega_0\\
\omega_1 & \text{in } \Omega_1
\end{cases}\\
    \Psi_x(x,0) &= 0\\
    \Psi_x(x,1) &=0\\
    \label{eqn:largeampstream3:kinint}\Psi(X(t),Y(t))&=0\\
    \label{eqn:avgDepth}\int_0^\pi Y(t)X'(t) \ ds &= \pi H/k,
\end{align}
\end{subequations}
with $\Psi$ being $2\pi/k$ periodic and even in $x$, and $X$ and $Y$ having the parity and periodicity conditions already discussed.

We reformulate the problem in such a way that we seek functions with a one-dimensional domain.
This dimension reduction makes the numerics vastly more efficient, as if we want mesh points to be a distance $1/N$ apart, then covering $\Omega_j$ over one period requires $\mathcal{O}(N^2)$ mesh points, whereas covering the interface over one period requires only $\mathcal{O}(N)$ mesh points.
To this end, we define a modified velocity field. 
On $\Omega_j$, let
\begin{align*}
    \tilde {u}_j&= \Psi_y - \omega_j (y-H)\\
    \tilde{v}_j&=-\Psi_x\\
    f_j&=\tilde{u}_j-i\tilde{v}_j.
\end{align*}
Identifying $\R^2$ and $\C$, we see that if $\Psi$ satisfies \eqref{eqn:largeampstream3}, then $f_j$ is a holomorphic function in $\Omega_j$. 
We wish to express $\hat{u}_j$ and $\hat{v}_j$ exclusively it terms of their values at the interface. This is done through use of the residue theorem, yielding
\begin{equation}\label{eqn:lowerHolomorphic}
    F(t_*)=-\frac{k}{ \pi} \int_0^\pi F(t)g\big(Z(t),Z(t_*) \big)Z'(t)+\overline{F(t)}g \big(-\overline{Z(t)},Z(t_*) \big) \overline{Z'(t)} \ ds,
\end{equation}
and
\begin{equation}\label{eqn:upperHolomorphic}
\begin{aligned}
    F(t_*)+Y(t_*) - H&=-\frac{k}{\pi}\int_0^1 (F(t)+Y(t)-H)g \big( k, i - Z(t), Z(t_*)-i \big) Z'(t) \\
    &\qquad \qquad+ (\overline{F(t)}+Y(t)-H) g \big( k, i + \overline{Z(t)}, Z(t_*)-i \big) \overline{Z'(t)} \ ds,
\end{aligned}
\end{equation}
where
\begin{align*}
    g(z,w)&=\frac{1}{(e^{-ikz}-e^{ikw})(e^{-ikz}-e^{-ik w})} - \frac{1}{(e^{ikz}-e^{ikw})(e^{ikz}-e^{-ik w})}\\
    \nonumber Z(t)&=X(t)+iY(t)\\
    \nonumber F(t)&=f_0(Z(t)).
\end{align*}
These equations come about through straightforward but tedious calculations, see Appendix~\ref{sec:numericsAlg} for more detail.

We also know that $\Psi(X(t),Y(t))=0$, so differentiating with respect to $t$ gives
\begin{align}
    \label{eqn:the other BC}  X'(t) \Im F(t) + Y'(t) (\Re F(t) +\omega_0 Y(t)) &=0.
\end{align}
Furthermore, through choice of parametrisation, we have that
\begin{equation}\label{eqn:constantSpeed}
    X'(t)^2+Y'(t)^2=(L/\pi)^2,
\end{equation}
for some constant $L$, where $L$ is the arc length of the interface over half a wavelength. 
Here $L$ is unknown and will be determined as part of our solution.
Finally, we enforce the average depth condition \eqref{eqn:avgDepth}.

Equations \eqref{eqn:lowerHolomorphic}--\eqref{eqn:constantSpeed} and \eqref{eqn:avgDepth} give 4 functional equations and one scalar equation which we solve to find 4 functions and a scalar: the real and imaginary parts of $F$, which we call $u(t)$ and $v(t)$, the parametrisation of the interface, $X(t)$ and $Y(t)$, and the arc length $L$. 
We exploit the periodicity of the domain by expressing the unknown functions in terms of Fourier series, that is we write
\begin{align*}
    u(t) &=\sum_{n=0}^\infty \hat{u}_n \cos(nt), & X(t) &= \sum_{n=1}^{\infty} \hat{X}_n \sin(nt),\\
    v(t) &= \sum_{n=1}^\infty \hat{v}_{n} \sin(nt), & Y(t) &= \sum_{n=0}^{\infty} \hat{Y}_n \cos(nt).
\end{align*}
In the above, we have used that $u$ and $Y$ are even, while $v$ and $X$ are odd and with zero mean.

We now truncate the above series in terms of a size parameter $N$. This gives a solution vector $\hat{\phi}$  consisting of the finitely-many Fourier coefficients.
We take the first $N$ terms of $\hat{u}$, the first $N-1$ terms of $\hat{v}$ and $\hat{X}$, the first $N$ terms of $\hat{Y}$, and $L$. 
Hence, $\hat{\phi}\in \R^{4N-1}$, and to form a closed discrete system we must use \eqref{eqn:lowerHolomorphic}--\eqref{eqn:avgDepth} to produce $4N-1$ equations for $\hat \phi$ to satisfy.
We discretise $t$ in to $N+1$ equally spaced mesh points $t_I$ given by
\begin{align}
 t_I &= \frac{(I-1)\pi}{N}, & i=1,\ldots,N-1,
\end{align}
with a corresponding set of $N$ midpoints, $\{  \frac{\pi}{2N}, \frac{3\pi}{2N}, \ldots, \frac{(2N-1)\pi}{N}\} $.
Our first $N-1$ equations are obtained by enforcing the real part of \eqref{eqn:lowerHolomorphic}, with $t_*$ evaluated at all midpoints except the first, and the integral in $t$ is performed using the trapezium rule on mesh points. 
We obtain our the next $N-1$ equations in a similar manner, taking the real part of \eqref{eqn:upperHolomorphic}, with $t_*$ evaluated at all midpoints except the first, and the integral in $t$ is performed using the trapezium rule on mesh points. 
The next $N-1$ equations are \eqref{eqn:the other BC} with $t$ evaluated at all midpoints except the first. 
The next $N$ equations are \eqref{eqn:constantSpeed} with $t$ evaluated at all midpoints, and the penultimate equation is \eqref{eqn:avgDepth} evaluated using the trapezium rule.

Notice that we do not enforce \eqref{eqn:lowerHolomorphic}, \eqref{eqn:upperHolomorphic} or \eqref{eqn:the other BC} at the first midpoint, as we require the same number of equations as unknowns.
It is checked for all converged solutions that  \eqref{eqn:lowerHolomorphic}, \eqref{eqn:upperHolomorphic} and \eqref{eqn:the other BC} are satisfied at the first midpoint to the same order of error as the other midpoints.

We now have $4N-2$ equations for $4N-1$ unknowns. 
There are two choices for our final equation. 
We may enforce a particular amplitude, i.e.~$Y(0)-Y(\pi)=A$ for some chosen $A$. Alternatively, we may insist that our solution $\hat \phi$ lies some distance $d$ from some other vector $\hat \theta$, i.e., $\sum (\hat \phi _n - \hat \theta_n)^2 = d^2$. 

We solve using a Newton method, terminating once the $L^\infty$ norm of the residual is smaller than some critical size, of the order $10^{-10}$.
In order to speed up computation, once the error is sufficiently small (of the order $10^{-4}$), we reuse the value of the Jacobian matrix from the previous iteration. 
To construct a branch of solutions we iterate the solver, finding each successive solution by enforcing a particular distance from the previous one, or incrementally increasing the enforced amplitude. 
At first, a small amplitude solution is recovered using an initial guess constructed from linear theory. 
We then use continuation to compute larger amplitude solutions, using a linear extrapolation of the two previously computed solutions as the initial guess.
For any discrete solution $\hat \phi$, we can recover the fluid velocity away from the interface by applying the residue theorem similarly to as in \eqref{eqn:lowerHolomorphic} and \eqref{eqn:upperHolomorphic}. 
We can then numerically compute derivatives of this velocity field and verify it solves \eqref{eqn:largeampstream:divfree} and \eqref{eqn:largeampstream:lap} to an acceptable level of error.

Note that  the limiting solutions cannot be recovered numerically with the described method. This is because as the limiting solution is approached via continuation, a larger and larger number of Fourier coefficients are required to get a converged solution to the given tolerances, until eventually $N$ becomes impractically large. Nonetheless, solutions close to limiting can be recovered.

\subsection{Results}

\begin{figure}
        \subfloat[]{
        \includegraphics[width=0.45\linewidth,trim={0 30 0 30},clip]{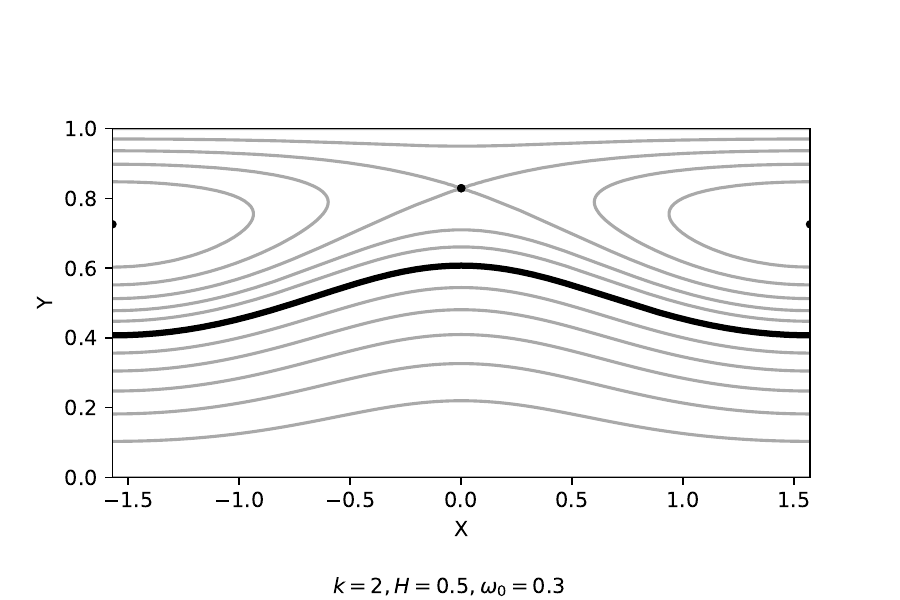}
        \label{fig:perturbative:cateye}}\hfill
        \subfloat[]{\includegraphics[width=0.45\linewidth,trim={0 30 0 0},clip]{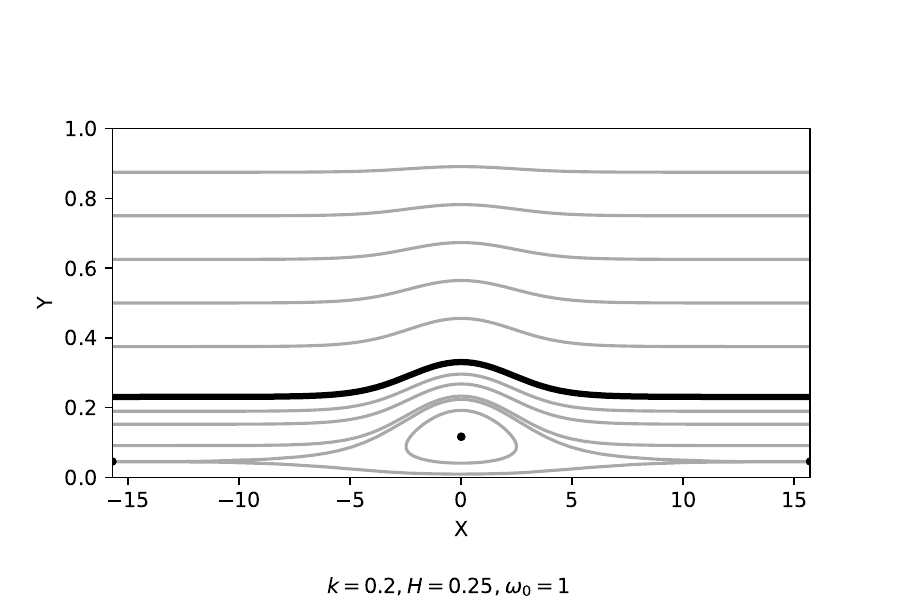}    
        \label{fig:perturbative:centre}}\\
        \subfloat[]{\raisebox{3ex}{\includegraphics[width=0.45\linewidth,trim={0 30 0 30},clip]{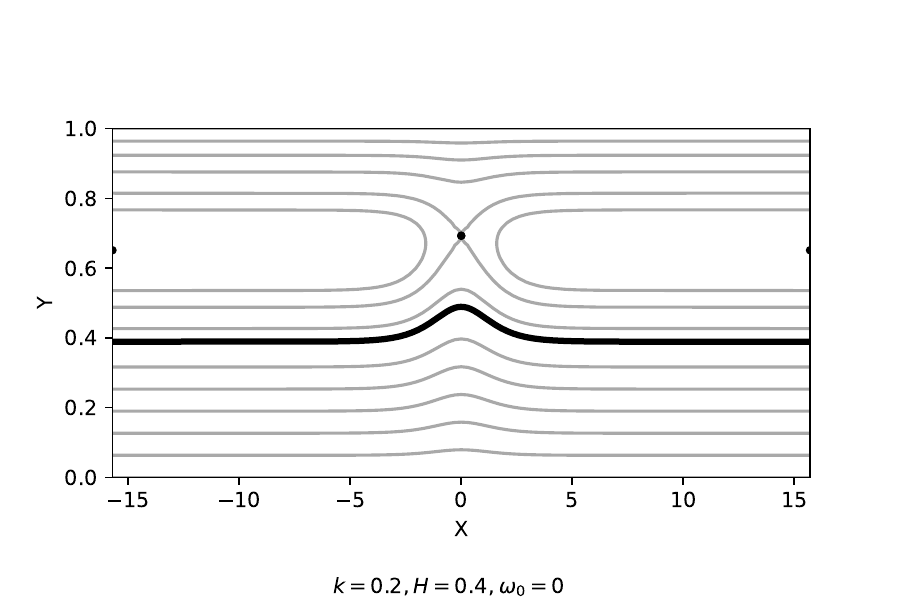}}
         \label{fig:perturbative:saddle}}\hfill
         \subfloat[]{\includegraphics[width=0.45\linewidth]{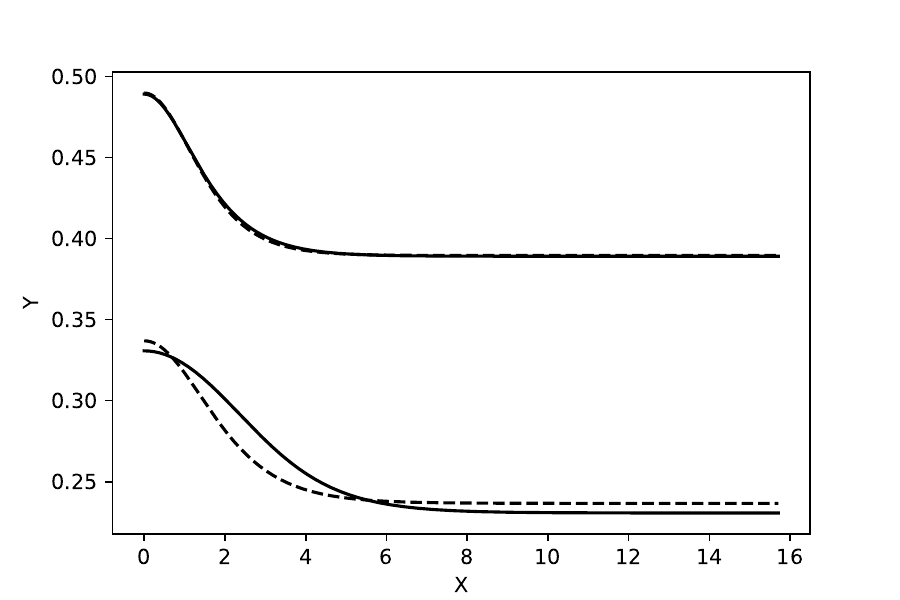}
         \label{fig:perturbative:compare}}
        \caption{Some perturbative solutions and their streamlines. The parameter tuple $(k,H,\omega_0,A)$ is equal to $(2,0.5,0.3,0.2)$ in panel~(A), $(0.2,0.25,1,0.1)$ in panel~(B), and $(0.2,0.4,0,0.1)$ in panel~(C).
        The solution in (A) exhibits a cat's-eye structure in the upper layer. Those in (B) and (C) are approximately shear when $x$ is not close to 0. They have stagnation points at $x=0$ which qualitatively agree with the predictions of \cite{ourpaper}. 
        In panel~(D) we see the interfaces of the solutions in (B) and (C) (solid) and the leading order term of the solution predicted by \cite{ourpaper} (dashed) over half a period.
        The solution from (C) has much better agreement 
        than the solution from (B), due to $|\theta|$ being considerably smaller in (B) than in (C), 0.25 versus 0.8.}
        \label{fig:perturbative}
\end{figure}
\begin{figure}
    \subfloat[]{\raisebox{3ex}
        {\includegraphics[width=0.48\textwidth,trim={0 30 0 30},clip]{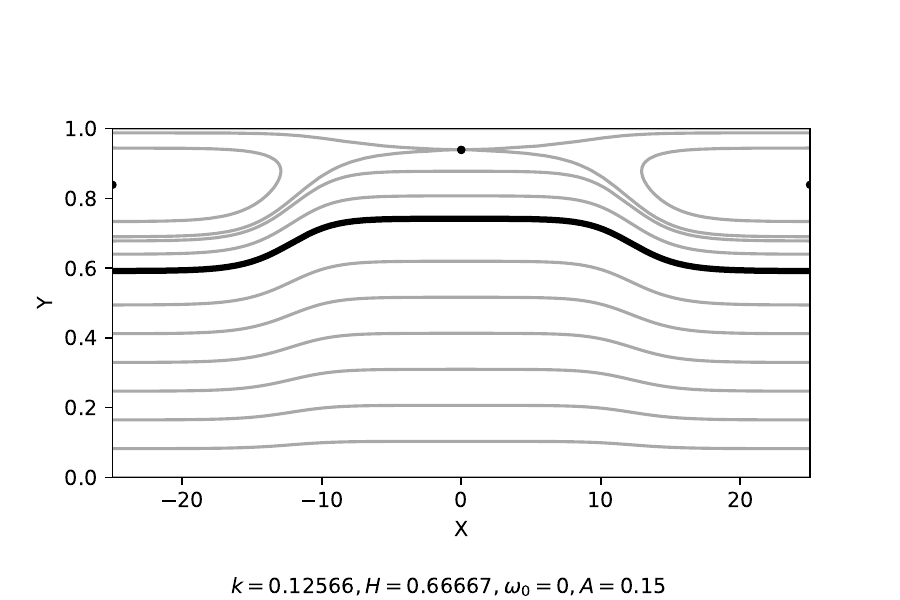}}\label{fig:perturbativebore:streamfn}}\hfill
    \subfloat[]{
         \includegraphics[width=0.48\textwidth,trim={0 0 0 20},clip]{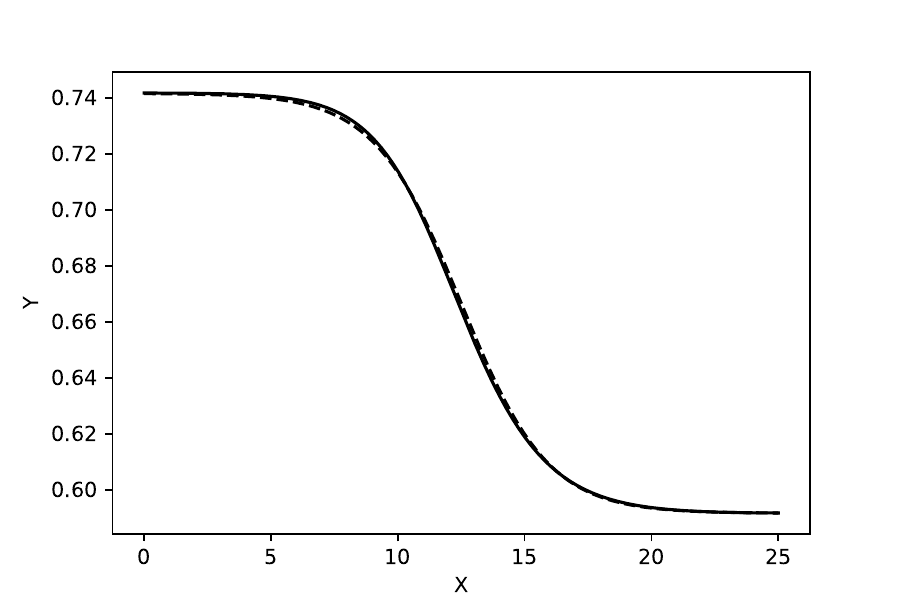}
         \label{fig:perturbativebore:compare}}
    \caption{In panel~(A) we have a perturbative periodic bore with parameters $(k,H,\omega_0,A) = (\pi/25,2/3,0,0.15)$. In panel~(B) we see the numerical solution (solid line) agrees well with the leading order term of the solution predicted by \cite{ChenWalshWheeler:WithoutPhaseSpace} (dashed).}
    \label{fig:perturbativebore}
\end{figure}

We present these results as an exploration of parameter space.
For each $(k,H,\omega_0) \in \R \times (0,1) \times \R$, there is a branch of solutions of \eqref{eqn:largeampstream3} with parameters $k$, $H$, and $\omega_0$, whose interface is strictly monotone in each half-period, and this branch is unique.
Furthermore, the amplitude $A$ increases monotonically along all such branches we have computed.
Thus, any solution  of \eqref{eqn:largeampstream3} with monotone interface is determined uniquely by a quadruple of parameters  $(k, H, \omega_0,A)$.
We note also a symmetry on solution space, namely, if $\Psi$ is a solution to \eqref{eqn:largeampstream3} with parameters $(k, H, \omega_0,A)$, then there exists a ``reflected'' solution given by $(x,y) \mapsto -\Psi(x,-y)$, which has parameters $(k,1- H, 1-\omega_0,-A)$.

We will discuss three kinds of solutions: \emph{perturbative}, then \emph{intermediate}, and finally \emph{limiting} solutions. 
Perturbative solutions are those which are a small perturbation from a shear flow, limiting solutions are those at the end of their solution branches, and intermediate solutions are those between perturbative and limiting.
Note that we do not refer to perturbative solutions as ``small amplitude", as it is possible for intermediate and limiting solutions to have small values of $A$, despite not being close to a shear flow in $C^1$. 
Note also that we do not take numerical limits, so when we refer to specific numerical limiting solutions, these will be solutions very close to the end of their solution branch, rather than exactly at it. 
 
Perturbative solutions display a \emph{cat's-eye structure} in at least one layer, with the other layer either being stagnation free, or also exhibiting a cat's-eye structure.  
By a cat's-eye structure we mean alternating saddle points and centres, with each pair of consecutive saddle points linked by two streamlines encircling the centre.
The saddle points occur above crests of the interface, or below troughs, and the centres occur below crests or above troughs.
One could also show this analytically, using the local bifurcation from Section~\ref{sec:localBifurcation} and the nodal analysis from Section~\ref{sec:refining}.
We show perturbative solutions with stagnation points in the upper layer only and lower layer only in Figure \ref{fig:perturbative}(A) and \ref{fig:perturbative}(B) respectively.
As $k$ becomes small, i.e.~when wavelengths are long, the linear approximation given by the local bifurcation becomes less accurate. 
However, we see increasingly good agreement with the solitary solutions in \cite{ourpaper}.
Note, \cite{ourpaper} makes the additional assumption that the quantity $\theta$ is non-zero, where $\theta = 3h+\omega_0-2$ and $h$ is the height at infinity of the interface of a solitary solution. 
Since $H$ roughly corresponds to $h$ in our long wavelength case, we expect the approximation to be less accurate when $3H+\omega_0-2$  is close to 0. This can be seen in Figure \ref{fig:perturbative}(D), where the solution with $\theta=0.25$ from panel $(B)$ has worse agreement with the long-wave theory \cite{ourpaper} than that of $\theta=0.8$ from panel $(C)$. 

We conjectured in \cite{ourpaper} that in the infinite wavelength case with $\theta=0$, perturbative solutions are bores.
This is demonstrated rigorously in  \cite[Corollary 7.4]{ChenWalshWheeler:WithoutPhaseSpace} for the particular case of $H=\frac 23$, $\omega_0=0$, and our results agree well with this as shown by Figure~\ref{fig:perturbativebore}.

As we move along a solution branch, it is possible for intermediate solutions to display stagnation in a layer that was initially stagnation-free.
This takes the form of a centre appearing either from the lower wall below the peak at $x=0$, or from the upper wall above the trough at $x=\pi/k$. This is demonstrated in Figure~\ref{fig:stagappears}, which shows waves with parameter values such that as the amplitude increases from the solution in panel (A) to panel (B), a stagnation point appears in the lower layer.
\begin{figure}
    \centering
    \begin{subfigure}[t]{0.49\textwidth}
        \centering
        \includegraphics[width=\textwidth,trim={0 30 0 30},clip]{smallcateye.pdf}
        \caption{}
        \label{fig:stagappears:notyet}
     \end{subfigure}
     \begin{subfigure}[t]{0.49\textwidth}
         \centering
         \includegraphics[width=\textwidth,trim={0 30 0 30},clip]{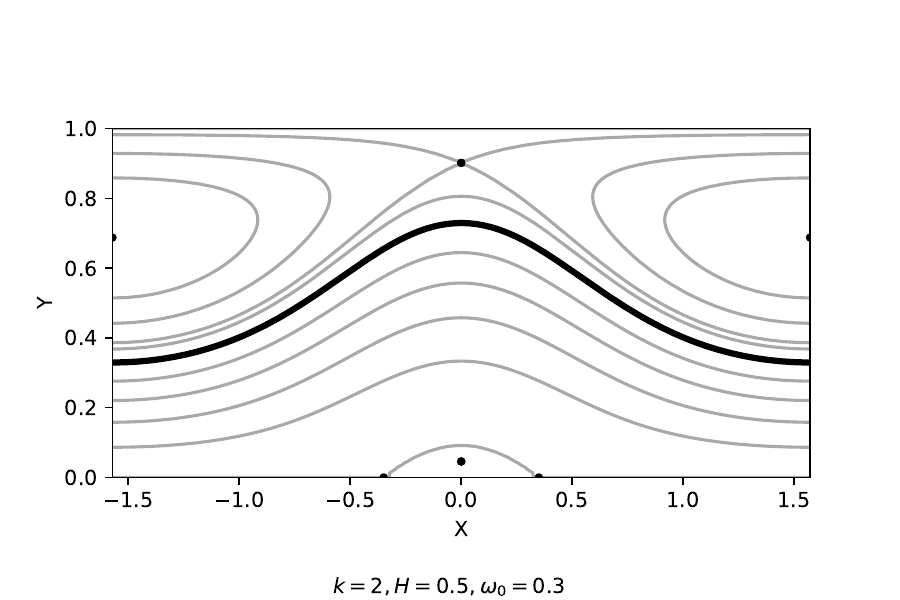}
         \caption{}        
         \label{fig:stagappears:appeared}
     \end{subfigure}
        \caption{Stagnation appears in the lower layer when the amplitude is increased from 0.2 in panel~(A) to 0.4 in panel~(B). For both solutions, $(k,H,\omega_0)=(2,0.5,0.3)$.}
        \label{fig:stagappears}
\end{figure}
\begin{figure}
    \centering
    \begin{subfigure}[t]{0.79\textwidth}
        \centering
        \includegraphics[width=\textwidth,trim={0 30 0 20},clip]{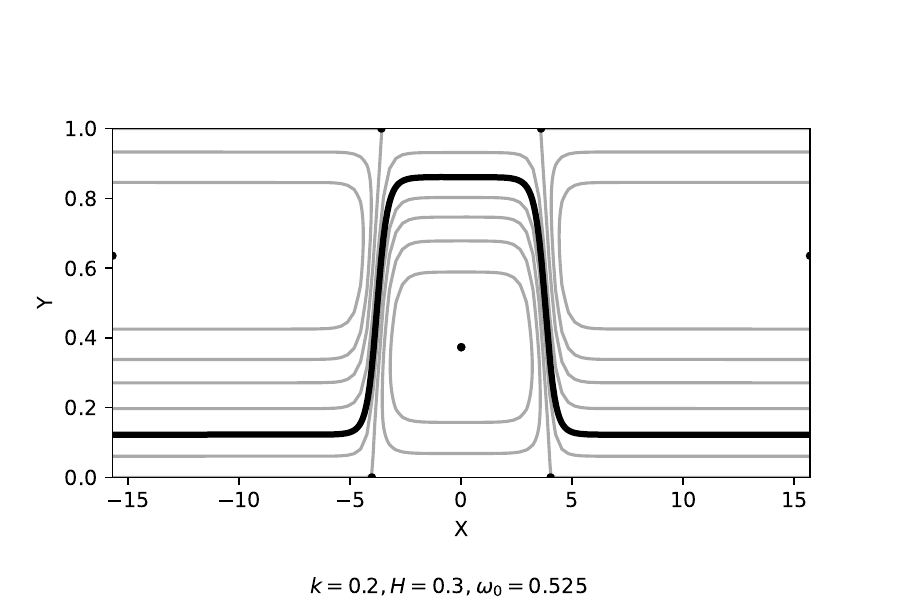}
     \end{subfigure}
        \caption{A solution with $(k,H,\omega_0,A) = (0.2,0.3,0.525,0.73873)$. The flow is approximately shear for $x \in [-2,2]$ and for $x \in [5\pi-10,5\pi+10]$.}
        \label{fig:intermediatebore}
\end{figure}
\begin{figure}
    \subfloat[]{
        \includegraphics[width=.45\textwidth,clip]{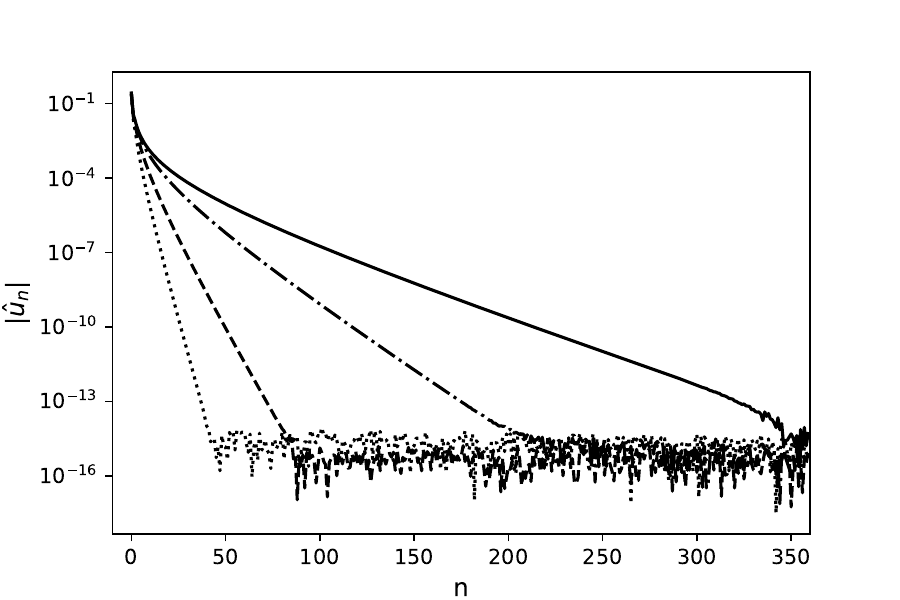}
        \label{fig:coefs:coefs}} \hfill
    \subfloat[]{
        \includegraphics[width=0.45\textwidth,trim={0 15 0 40},clip]{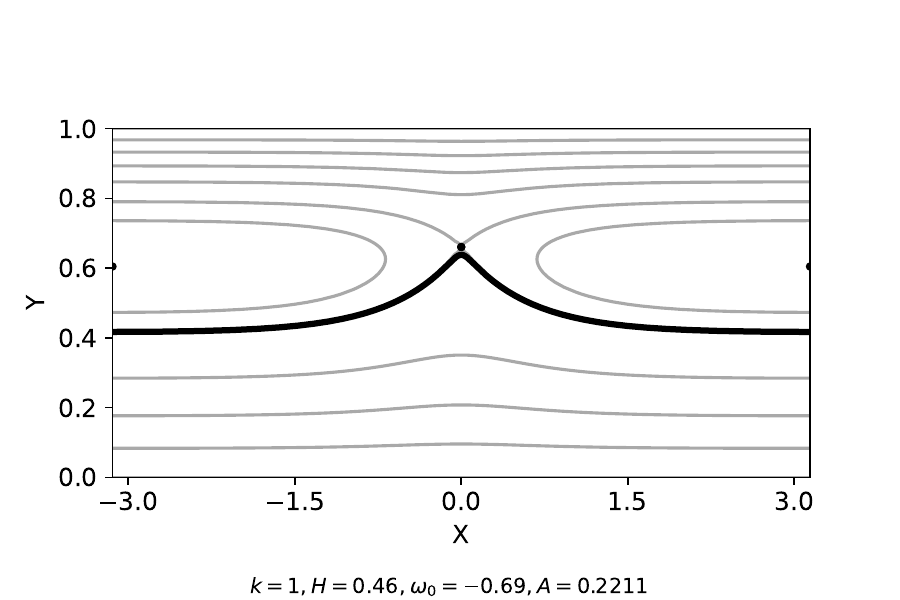}    
        \label{fig:coefs:streamlines}}\\
        \caption{  In panel~(A) we see the how the Fourier coefficients of the horizontal velocity decay along the branch with parameters $(k,H,\omega_0)=(1,0.46,-0.69)$. The solutions have size parameter $N=360$. The dotted line corresponds to $A=0.1$, the dashed line to $A = 0.15$, the dot-dashed line to $A=0.2$, and the solid line to $A=0.2211$
        In panel~(B) we have the streamlines of the last solution.}
        \label{fig:coefs}
\end{figure}

For long wavelengths, many intermediate solutions have two distinct regions per period, in each of which the flow behaves like a shear flow.
We call such a solution a \emph{periodic bore}.

\begin{figure}
    \subfloat[]{
        \includegraphics[width=0.31\textwidth,trim={0 15 0 50},clip]{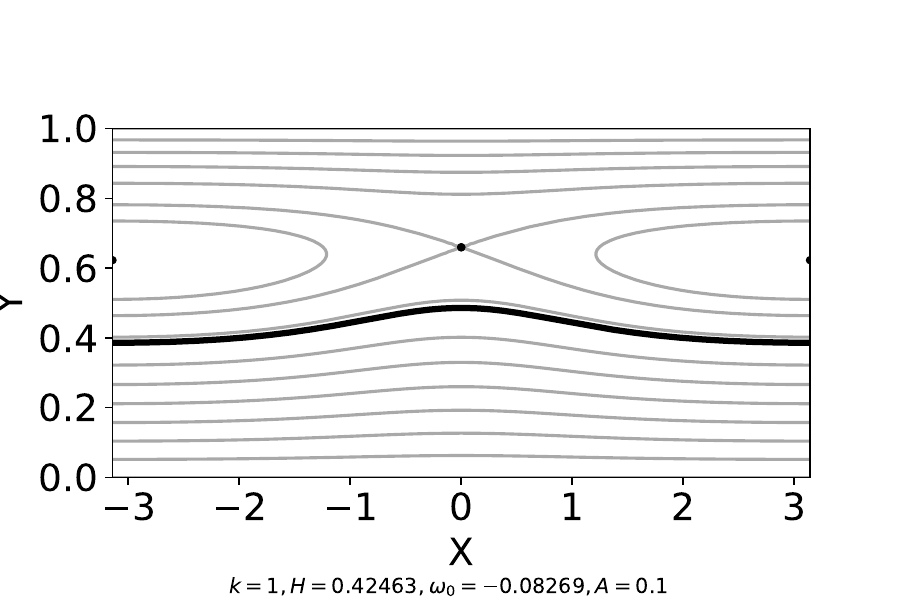}
        \label{fig:saddleSplitting:smallCorner}}
    \subfloat[]{
        \includegraphics[width=0.31\textwidth,trim={0 15 0 40},clip]{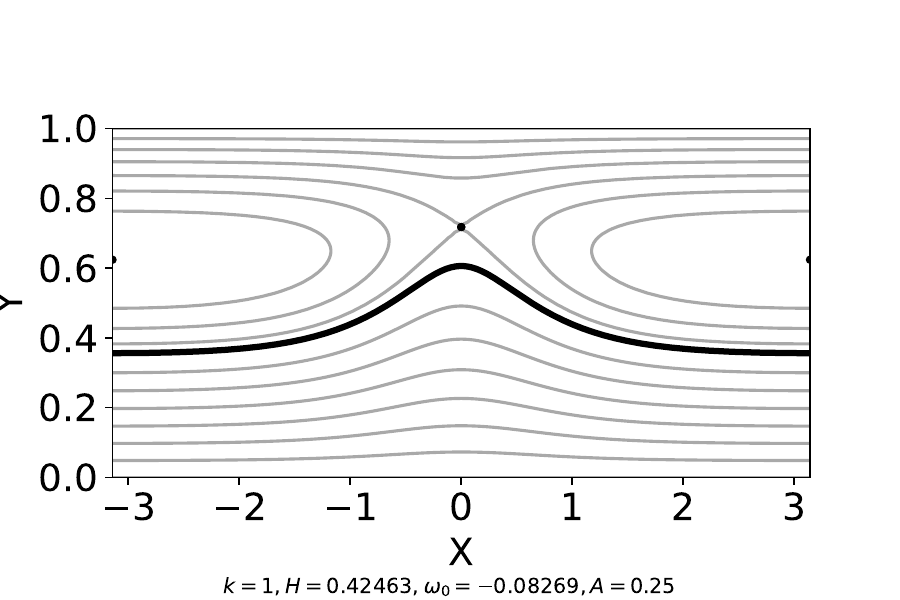}    
        \label{fig:saddleSplitting:midCorner}}
    \subfloat[]{
        \includegraphics[width=0.31\textwidth,trim={0 15 0 40},clip]{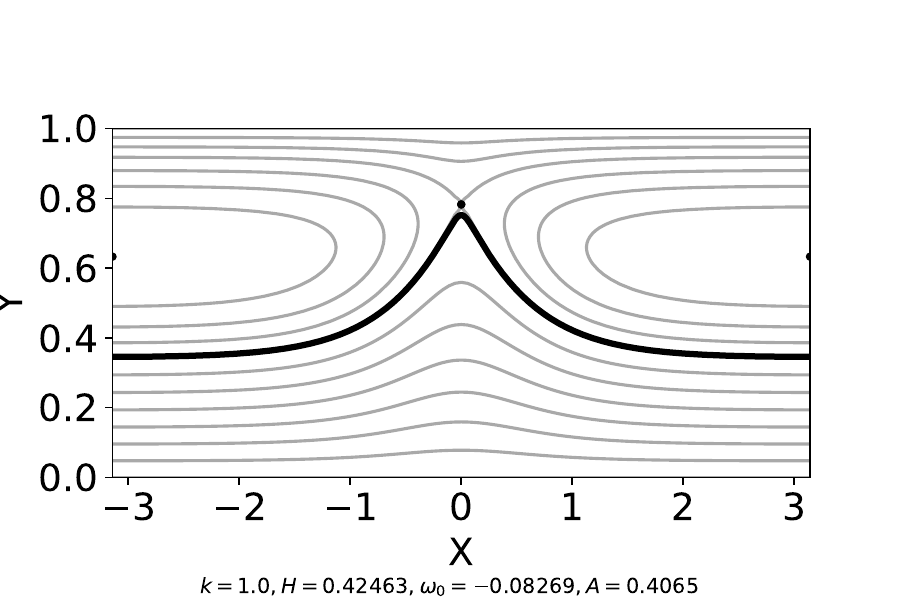}
        \label{fig:saddleSplitting:largeCorner}}\\
    \subfloat[]{
         \includegraphics[width=0.31\textwidth,trim={0 15 0 50},clip]{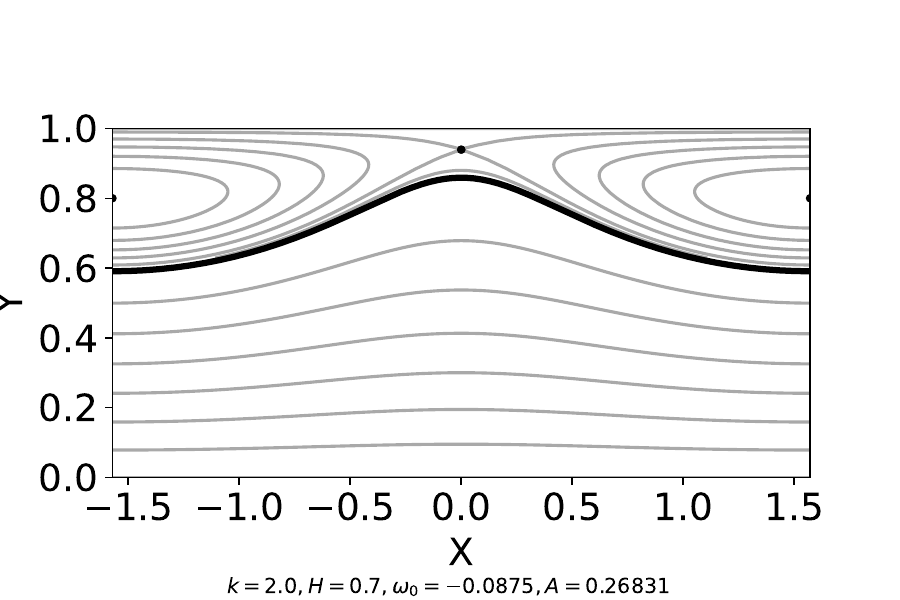}
        \label{fig:saddleSplitting:smallstagtouchwall}}
    \subfloat[]{
         \includegraphics[width=0.31\textwidth,trim={0 15 0 40},clip]{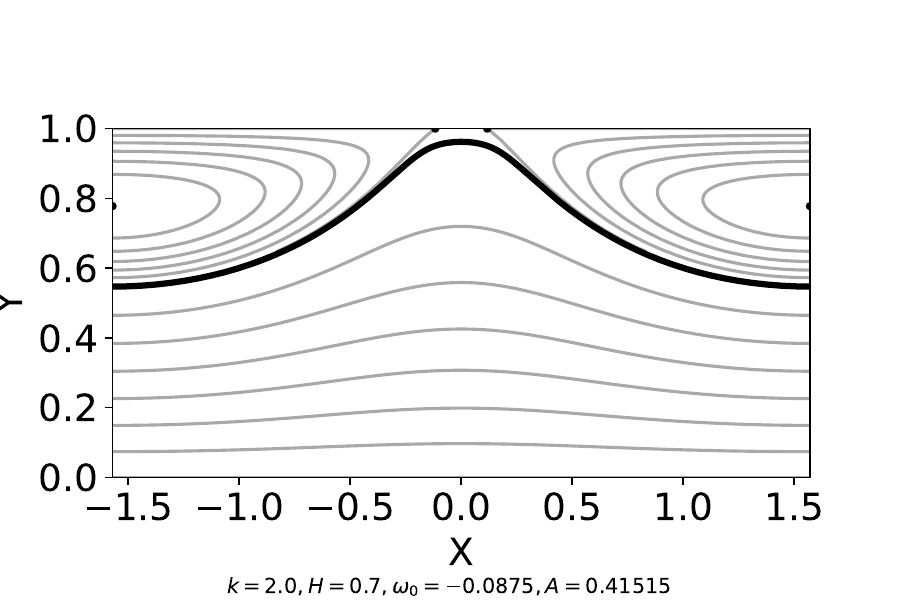}
         \label{fig:saddleSplitting:midstagtouchwall}}
    \subfloat[]{
        \includegraphics[width=0.31\textwidth,trim={0 15 0 40},clip]{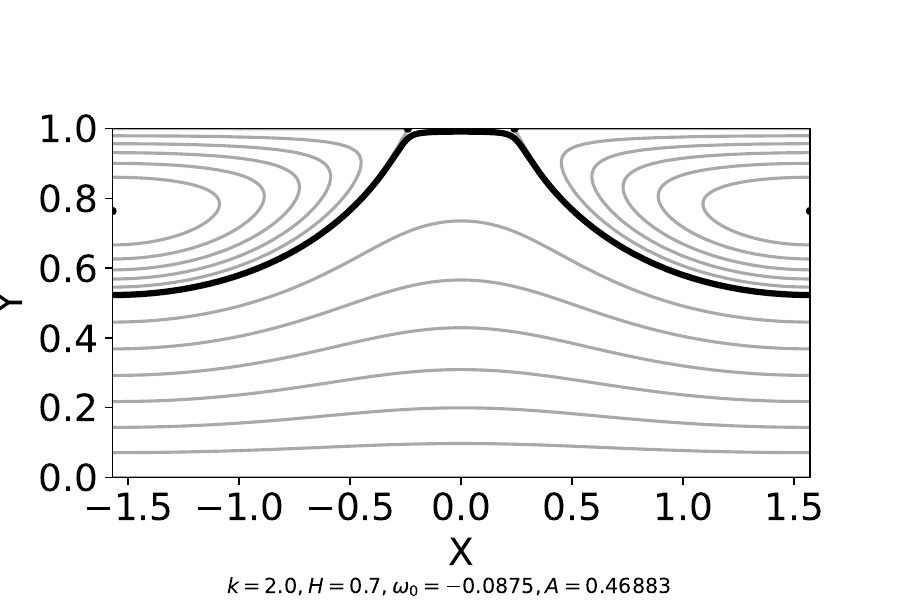}
         \label{fig:saddleSplitting:largestagtouchwall}}
        \caption{ On the upper row we have solutions along a branch with $(k,H,\omega_0)=(1,0.42463, -0.08269)$, with the amplitude increasing from $0.1$ in panel~(A) to $0.25$ in panel~(B) to $0.4065$ in panel~(C). 
        The saddle point collides with the interface, thus the limiting solution is of Type I.
        On the lower row we have solutions along a branch with $(k,H,\omega_0)=(2,0.7, -0.0875)$. In panel~(D) we have a perturbative solution with $A=0.25$. 
        As the amplitude increases, the saddle point collides with the upper wall and splits into two saddle points, as we see in panel~(E), which has $A=0.4$. 
        There is now no stagnation between the crest and the upper wall, which allows a Type II solution to develop as the limit, demonstrated in panel~(F), which has $A=0.46883$.}
        \label{fig:saddleSplitting}
\end{figure}
To better understand these periodic bores, we make a brief digression to discuss \emph{conjugate flows}.
Given constants $h$, $c$, we can define a corresponding shear flow 
\begin{equation*}
    \Psi^*_{h,c}(y) = \begin{cases}
        \omega_0(y-h)^2+c(y-h) & y \in [0,h]\\
        \omega_1(y-h)^2+c(y-h) & y \in [h,1].\\       
    \end{cases}
\end{equation*}
In order for $\Psi$ to be a periodic bore, we must have constants $h$, $\tilde h $, $c$, and $\tilde c$,  such that for $x $ near $0$ we have $\Psi(x,y) \approx \Psi^*_{h,c}(y)$, and for  $x $ near $\pi/k$ we have $\Psi(x,y) \approx \Psi^*_{\tilde h,\tilde c}(y)$, where the approximation is in a $C^1$ sense.
Now observe that for any $\Psi$ solving \eqref{eqn:largeampstream3}, we can define three invariants, i.e., quantities which do not depend on $x$. 
These are the mass flux in the lower layer, given by $\Psi(X(t),Y(t))-\Psi(X(t),0)$, the mass flux in the upper layer, given by $\Psi(X(t),1)-\Psi(X(t),Y(t))$, and the flow force,
\begin{equation}
\label{eqn:flowforce}
    \int_0^{Y(t)} \left(  \frac{\Psi_x^2 - \Psi_y^2}{2}+\omega_0 y  \Psi_y \right) \ dy +\int_{Y(t)}^1 \left(  \frac{\Psi_x^2 - \Psi_y^2}{2}+\omega_1 y  \Psi_y \right) \ dy,
\end{equation} 
where the functions in \eqref{eqn:flowforce} are evaluated at $(X(t),y)$.
Thus we seek shear flows $\Psi^*_{ h, c}$ and $\Psi^*_{\tilde h,\tilde c}$ with $\tilde h<h$, equal mass fluxes in both layers, and equal flow force. 
We call such shear flows \emph{conjugate} flows.
Any shear flow is clearly conjugate to itself, but more interestingly, distinct shear flows $\Psi^*_{ h, c}$ and $\Psi^*_{\tilde h,\tilde c}$ are conjugate if and only if
\begin{subequations}\label{eqn:conj}
\begin{align}
    \label{eqn:conj:c}c &= \tfrac 19 ( 3h+\omega_0-2)^2+h(1-h)\\
    \label{eqn:conj:ctilde}\tilde c &= \tfrac 19 ( 3\tilde h+\omega_0-2)^2+\tilde h(1-\tilde h)\\
    \label{eqn:conj:htilde}\tilde h&=-h+\tfrac23 (2-\omega_0).
\end{align}
In Figure~\ref{fig:intermediatebore}, we present a typical profile of a periodic bore. Taking the value of $h$ to be the interface height at $x=0$, equation \eqref{eqn:conj:htilde} predicts a value of $\tilde{h}=0.122310$ up to 6 decimal places. Looking at the interface at $x=\pi/k$, we find an interface height of $Y=0.122296$, in superb agreement with the conjugate flow theory. We can construct a lower bound for the periodic bore amplitude as follows.
Monotonicity of the interface imposes the further restriction that 
\begin{equation}
    \label{eqn:conj:H}0<\tilde h < H < h < 1.
\end{equation}
\end{subequations}
Note, it is not necessarily true that $\frac 12( h + \tilde h)=H$, as the region in which $\Psi \approx \Psi^*_{h,c}$ may not be the same size as the region in which $\Psi \approx \Psi^*_{\tilde h,\tilde c}$. 
By monotonicity, the amplitude of $\Psi$ is $h-\tilde h$.
Thus, if $H \neq \frac13 (2-\omega_0) $, then  by \eqref{eqn:conj:htilde} and \eqref{eqn:conj:H}, periodic bores must have amplitude greater than $2|H-\frac13 (2-\omega_0)|$.
Once this minimum amplitude has been reached, we do indeed see periodic bores in the numerics.
Solutions below this minimum amplitude fit better in the perturbative regime, and are best described by the solitary solutions found in \cite{ourpaper}.
If however, $H=\frac13(2-\omega_0)$, then no such minimum amplitude exists, and we can find perturbative periodic bores as discussed.

\begin{figure}
    \centering
    \begin{subfigure}[t]{0.3\textwidth}
        \centering
        \includegraphics[width=\textwidth,trim={0 15 0 50},clip]{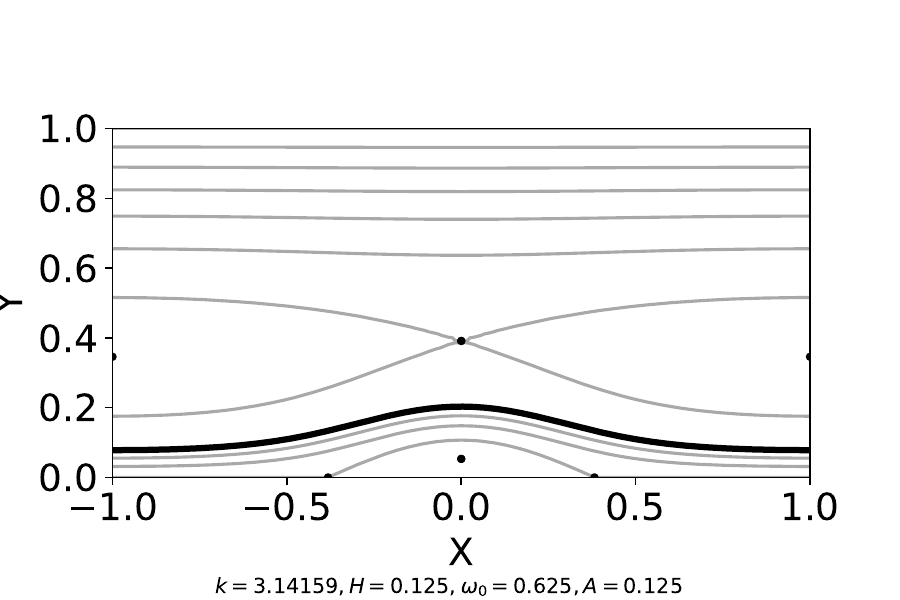}
        \caption{}
        \label{fig:unclear:small}
    \end{subfigure}
    \begin{subfigure}[t]{0.3\textwidth}
            \centering
        \includegraphics[width=\textwidth,trim={0 15 0 40},clip]{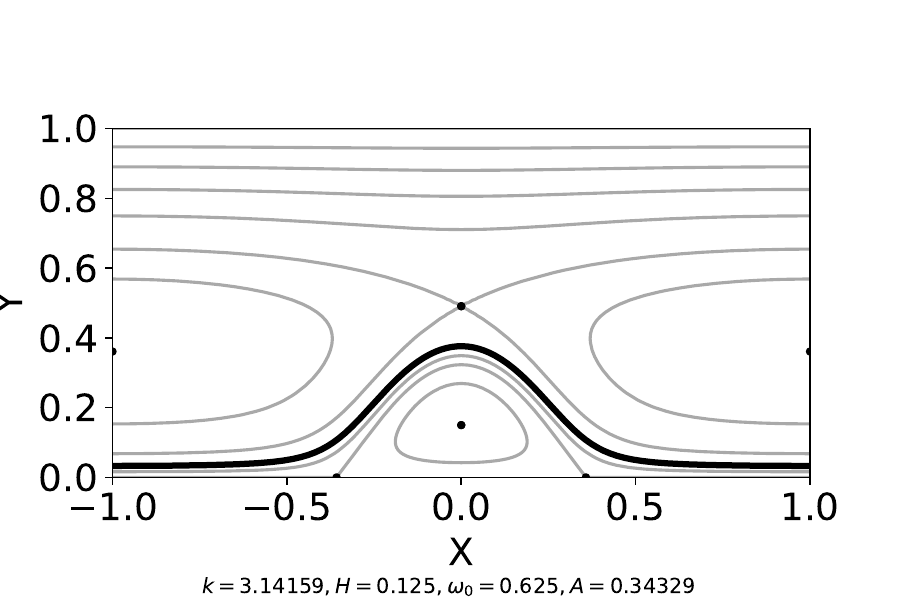}    \caption{}        \label{fig:unclear:mid}
    \end{subfigure}
    \begin{subfigure}[t]{0.3\textwidth}
            \centering
        \includegraphics[width=\textwidth,trim={0 15 0 40},clip]{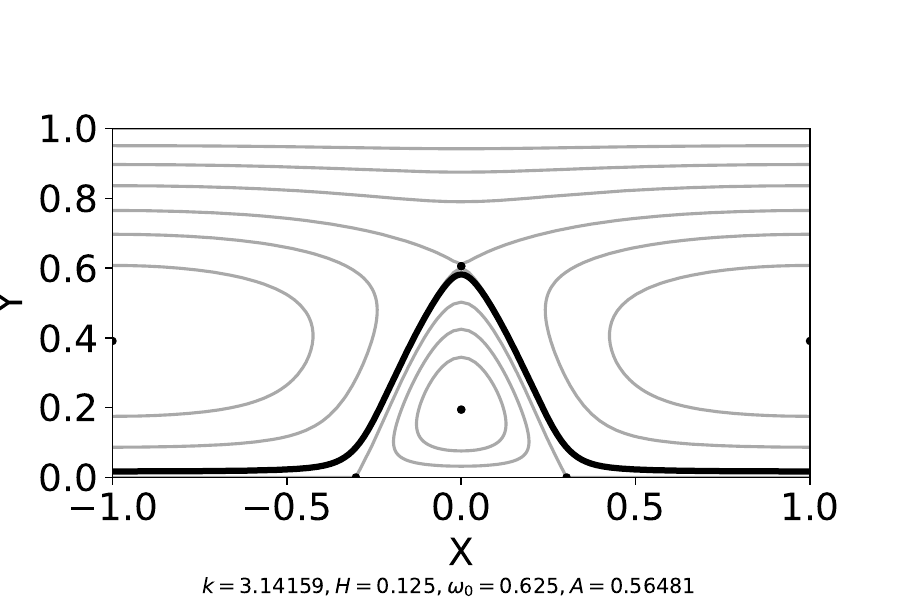}
	    \caption{}        \label{fig:unclear:big}
     \end{subfigure}
        \caption{ Some solutions along a branch with $(k,H,\omega_0)=(\pi,0.125,0.625)$.  As the amplitude increases from $0.125$ in panel~(A) to $0.3$ in panel~(B), it initially seems we have a Type II solution, since the distance from the trough to the lower wall is much smaller than from the peak to the saddle point. However, when the amplitude is further increased to $0.56481$ in panel~(C), the crest of the interface becomes close to a saddle point in the upper layer, making it unclear which of these causes termination of the solution branch.}
        \label{fig:unclear}
\end{figure}
\begin{figure}
    \subfloat[]{
        \includegraphics[width=0.48\textwidth,trim={0 20 0 20},clip]{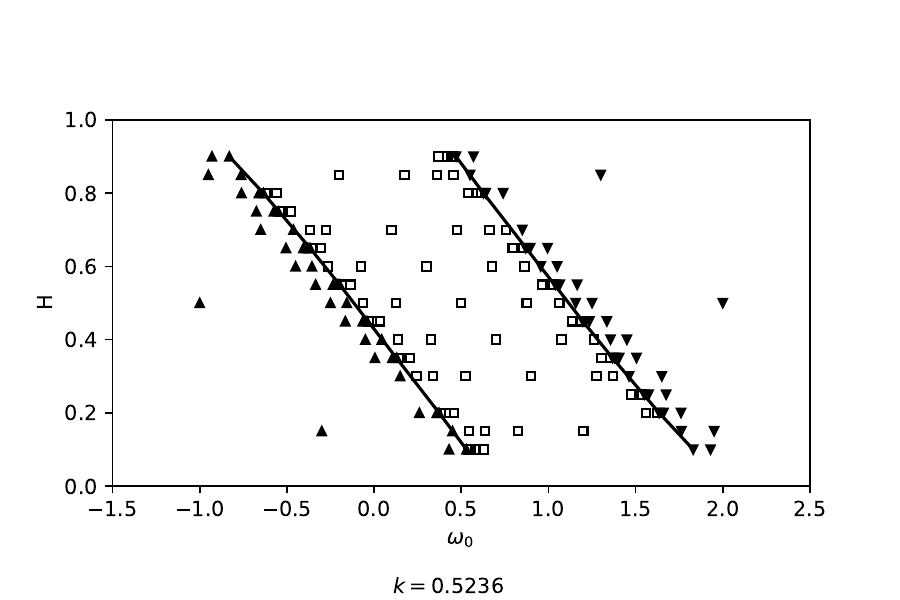}
        \label{fig:paramspace:piby6}}\hfill
    \subfloat[]{
        \includegraphics[width=0.48\textwidth,trim={0 20 0 20},clip]{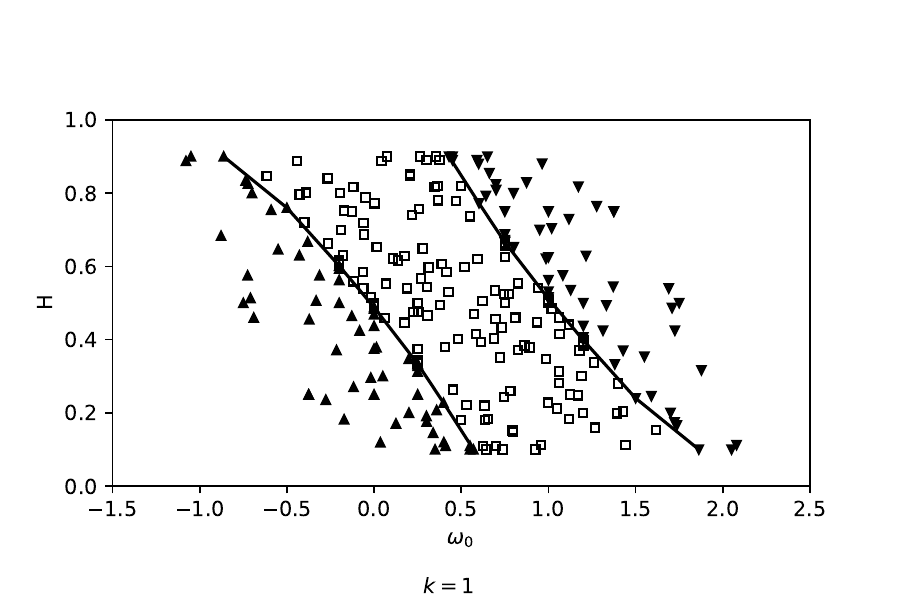}     
        \label{fig:paramspace:1}}\\
    \subfloat[]{
        \includegraphics[width=0.48\textwidth,trim={0 20 0 20},clip]{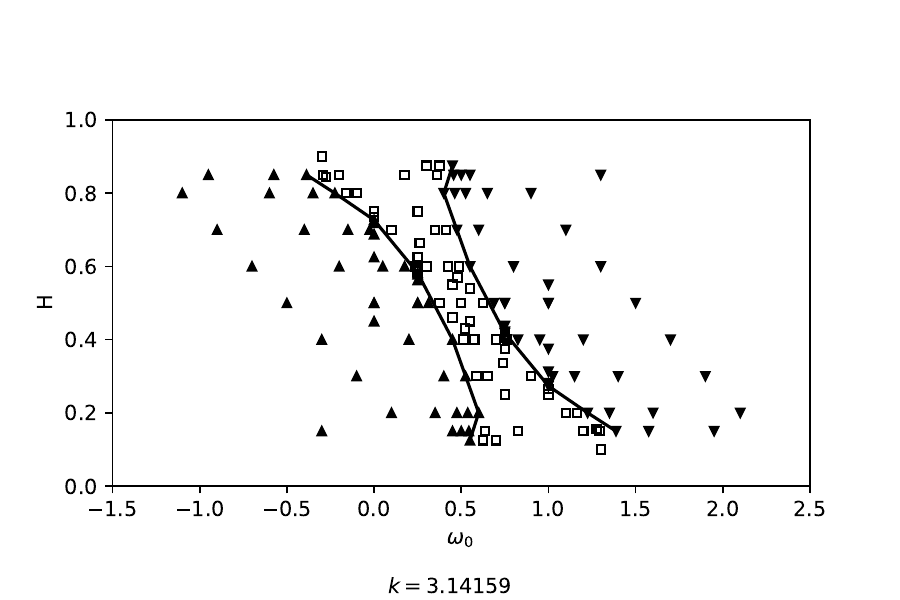}
        \label{fig:paramspace:pi}}\hfill
    \subfloat[]{
         \includegraphics[width=0.48\textwidth,trim={0 20 0 20},clip]{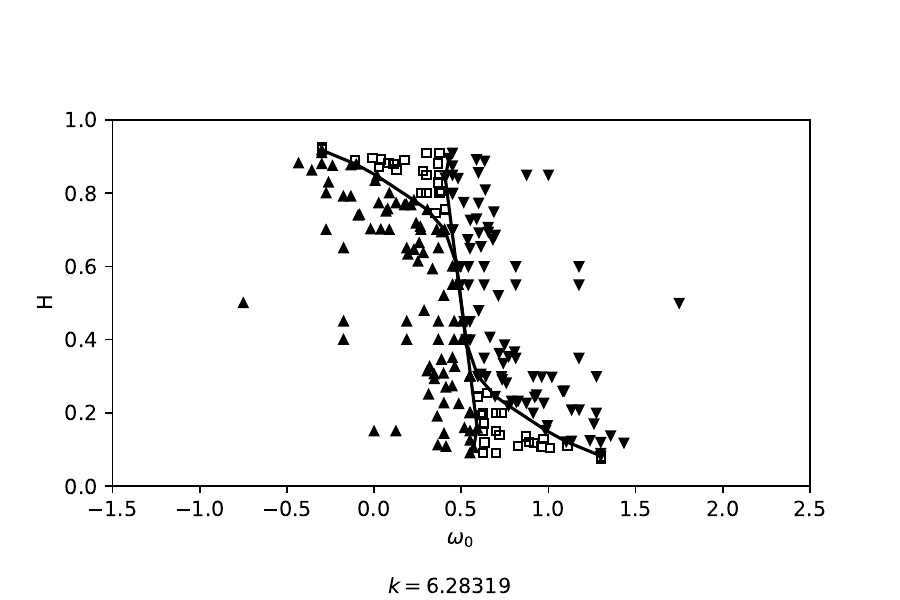}
         \label{fig:paramspace:k2pi}}
    \caption{ The nature of limiting solutions in different regions of parameter space. In panel~(A) we have $k=\pi/6$, in panel~(B)  $k=1$, in panel~(C)  $k=\pi$, in panel~(D)  $k=2\pi$. Upwards pointing triangles correspond to Type I solutions with a corner at their crest, downwards pointing triangles correspond to Type I solutions with a corner at their trough, and squares correspond to Type II solutions. The lines give the approximate boundary between regions of parameter space, and are generated by taking the convex hull of the region of Type I solutions with a corner at their crest. }
        \label{fig:paramspace}
\end{figure}

We now discuss the formation of limiting solutions. 
As previously stated, limiting solutions cannot be computed by our numerical scheme, due to a slowing of the decay of the coefficients of the Fourier expansions. We demonstrate this in figure \ref{fig:coefs},  where in panel (A) we show the Fourier coefficients of the horizontal velocity $\hat{u}_n$ for four solutions of increasing amplitude with $N=360$. The largest amplitude solution is shown in panel $(B)$. It has much slower decay of the coefficients, and we believe this solution is near-limiting. We justify this by observing that as the amplitude increases, the stagnation point in the upper layer above the crest approaches the interface. For the solution in panel (B), the stagnation point is very close to the interface, and we conjecture that as the amplitude increases further, the stagnation point will reach the interface, and the resulting wave profile will form a corner at $x=0$. Unfortunately, impractically large $N$ is required to obtain converged solutions beyond the wave shown in panel (B). This demonstrates we can recover near-limiting waves, but not the limiting wave itself. Nonetheless, this allows us to predict the nature of the limiting wave.
 Our computations through parameter space suggest that there are two types of limiting solution: those that form corners, and those that approach a wall, which we refer to as \emph{Type I} and \emph{Type II} solutions respectively.

There are several ways a solution branch may approach its limiting solution. 
We describe how termination criteria can be achieved at the crest of the interface, but all of these have direct analogues which can occur at the trough.
If there is a saddle point in the flow above the crest at $x=0$, then this may collide with the interface as the amplitude increases, resulting in a Type I solution, like that shown in Figure~\ref{fig:coefs}(B). We show the process of the stagnation moving towards the interface in Figure~\ref{fig:saddleSplitting}(A)--(C), with the solution in panel (C) being close to a Type I solution.
Alternatively, the crest may approach the upper wall if there are no stagnation points between.
This lack of stagnation occurs in one of two ways: either for all solutions along the bifurcation curve, there is no stagnation above the crest, or there is initially a saddle above the crest, but as the amplitude increases, the saddle point collides with the upper wall.
After this collision, it will split into two saddle points, which move along the upper wall away from $x=0$, leaving no stagnation points at $x=0$ in the upper layer. 
This results in a Type II solution, and typical solutions along such a branch are shown in  Figure~\ref{fig:saddleSplitting}(D)--(F).
\begin{figure}
    \subfloat[]{
        \includegraphics[width=.45\textwidth,trim={0 15 0 40},clip]{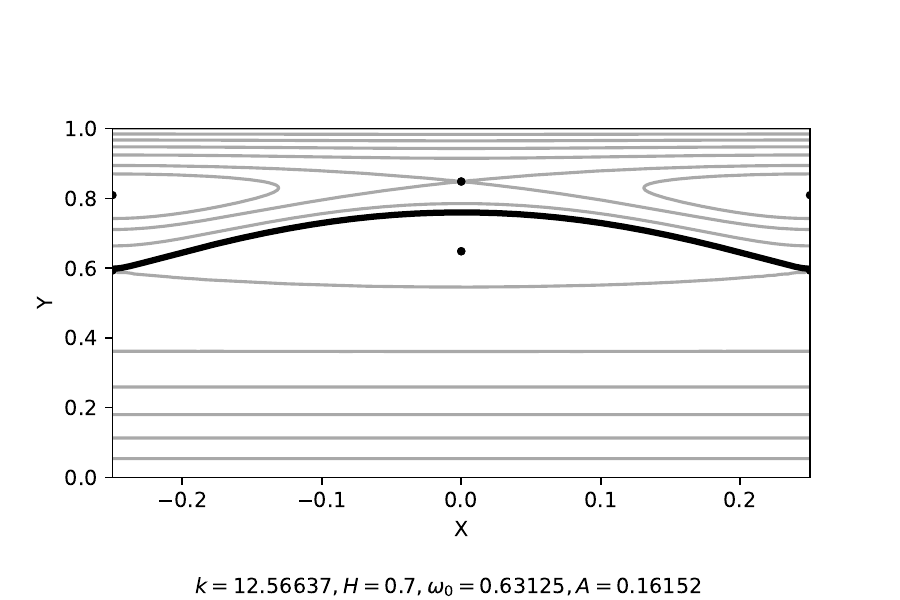}
        \label{fig:deep:H07}} \hfill
    \subfloat[]{
        \includegraphics[width=0.45\textwidth,trim={0 15 0 40},clip]{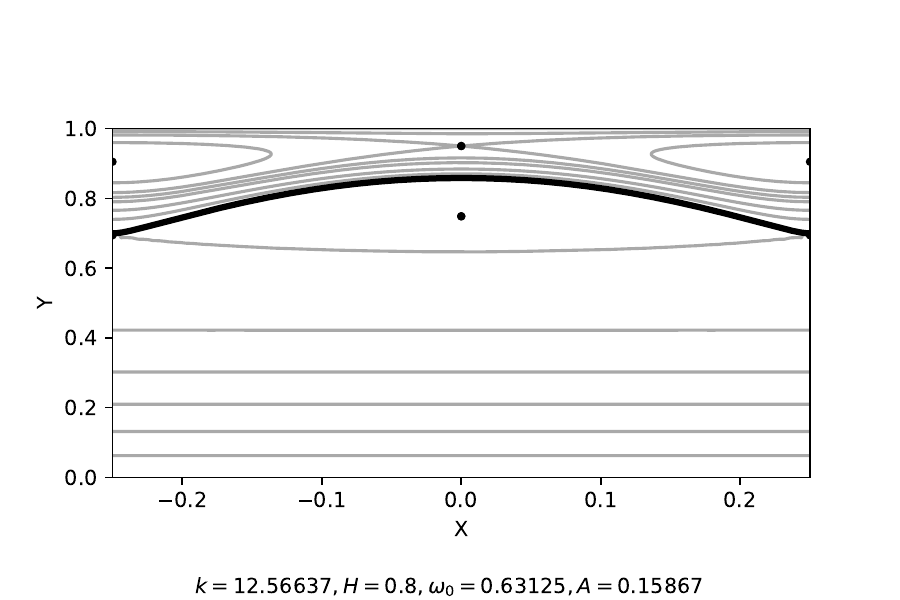}    
        \label{fig:deep:H08}}\\
    \subfloat[]{
         \includegraphics[width=0.45\textwidth,trim={0 0cm 0 1cm},clip]{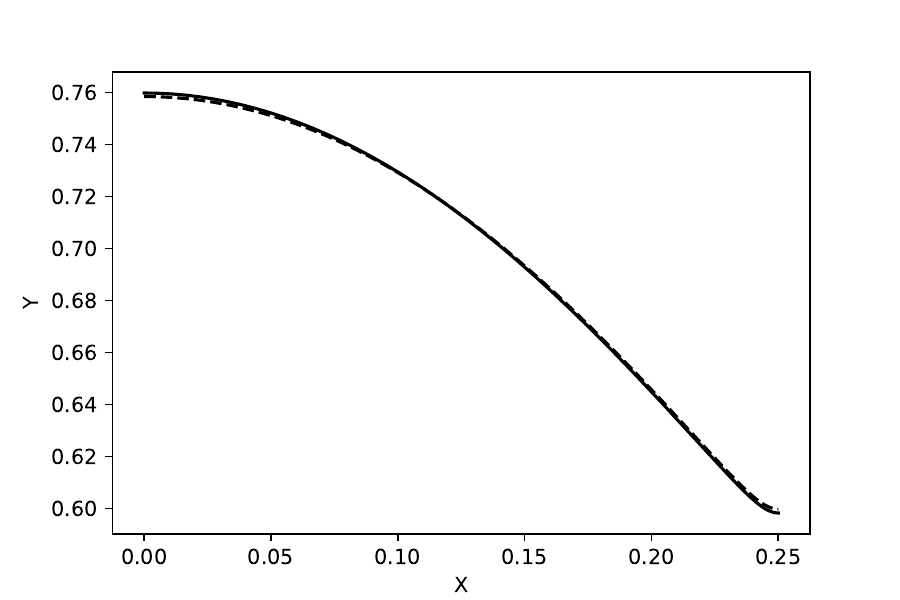}
        \label{fig:deep:compare}}
        \caption{ Two solutions with $k=4\pi$, $\omega_0=0.63125$. In panel~(A) we have $H=0.7$, and in panel~(B) $H=0.8$. In panel~(C),  we see that the interface from (A) (solid line) agrees well with the interface from (B) translated down by 0.1 (dashed). The velocity along the interface for each solution agrees similarly well.}
        \label{fig:deep}
\end{figure}

There is however a caveat. As the crest becomes very close to the upper wall, it can become infeasible to resolve the limiting behaviour at the trough. 
For example, if there is a saddle a small distance below the trough, as the amplitude increases, this distance may be bounded below, meaning we have a Type II solution, or this distance may become arbitrarily small, meaning we have a Type I solution that at first appeared to be a Type II solution.
Alternatively, if the crest is approaching the upper wall and there is no saddle below the trough, it is unclear whether the trough will approach the lower wall or tend to some other limit.
Figure~\ref{fig:unclear} shows solutions along a branch where this difficulty in distinguishing the limiting behaviour is observed.

Additionally, for certain critical parameter values, more than one termination criterion may occur at once.
This can manifest as multiple criteria occurring at the same place, for example a corner approaching the wall as it forms, or criteria occurring at different places, for example, a corner forming at the crest as the trough approaches the wall.
In Figure~\ref{fig:paramspace}, we present a sweep of parameter space, showing whether the solution branch with the given parameters limits to a Type I or Type II solution.
For $k$ less than some critical value around 5, Type II solutions are found in one connected region, with Type I solutions to either side. This behaviour is observed  Figure~\ref{fig:paramspace}(A)--(C).
Solutions on the left-hand boundary of this region have a corner at their crest and touch one of the walls.
If $H$ is less than some critical value then the interface will touch the lower wall, if it is greater than this value, the corner will touch the upper wall.
Solutions on the right-hand boundary are ``reflections" of solutions on the left-hand boundary, in the sense described at the start of this section. 
As $|\omega_0|\to\infty$, the amplitude of the corresponding Type I solutions become smaller, we believe unboundedly so.

As the value of $k$ increases, the region of Type II solutions becomes ``pinched" around $H=0.5$, eventually splitting into two regions. This can be seen in Figure~\ref{fig:paramspace}(D).
Solutions on the boundary between the regions of upward pointing corners and downwards pointing corners have corners at the peak and trough. 

In the short wavelength limit ($k \to \infty$), we find that $\omega_0$ generally has a far greater impact than $H$ on the behaviour of Type I solutions. This is because Type I solutions do not ``see" the wall, unless they are particularly close to it, so varying $H$ in such a case simply translates the interface of the solution vertically, and leaves $u(t)$, $v(t)$ and $X(t)$ largely unaffected.
In other words, if $k$ is large, then for a Type I solution which is not close to a wall, only $\hat Y_0$ is significantly affected by small variations in $H$ (see Figure~\ref{fig:deep}).

\subsection*{Acknowledgements}
AD would like to acknowledge funding from EPSRC NFFDy Fellowships (EPSRC grants EP/X028607/1).
JS received support through EPSRC, EP/T518013/1.

\appendix
\section{Proof of Theorem~\ref{thm:buffoniToland}}\label{sec:proofOfGlobBif}
\begin{proof}
    This very closely follows the argument in \cite{Buffoni--Toland:Book}, we focus on the small differences.
    Firstly, by the real-analytic Crandall--Rabinowitz theorem \cite[Theorem~8.3.1]{Buffoni--Toland:Book},  hypotheses~\ref{largeamphypothesis1}--\ref{largeamphypothesis3} yield the existence of a local curve of solutions, i.e., functions $Z_{\mathrm{loc}}, Q_{\mathrm{loc}}$ with domain $(-\eps, \eps)$ such that $(Z_{\mathrm{loc}}(0), Q_{\mathrm{loc}}(0))=(\zeta_*,q_*)$, and  $\mathcal{F}(Z_{\mathrm{loc}}(\tau), Q_{\mathrm{loc}}(\tau)) \equiv 0$.
    We seek to extend this to a global curve of solutions by joining \emph{distinguished arcs} together.
    Given a function $\mathcal{G}$ with domain $\tilde{ \mathcal{U}}$, we let
    \[\mathcal{A} = \{(\zeta,q) \in \tilde{ \mathcal{U}} \mid \mathcal{G}(\zeta,q)=0, \ D_\zeta \mathcal{G} (\zeta,q) \text{ invertible }   \},\]
    and define a distinguished arc of $\mathcal{G}$  to be a maximal connected subset of $\mathcal{A}$.

    If $Q_{\mathrm{loc}}'$ is identically 0, it is possible that no non-trivial solutions lie in a distinguished arc.
    To avoid this, we investigate the distinguished arcs of a new function $\mathcal{G}$. To define $\mathcal{G}$, we introduce a bounded linear functional $l \colon \mathcal X \to \R$.
    If $Q_{\mathrm{loc}}'(0) \neq 0$, let $l \equiv 0$.
    If, however, $Q_{\mathrm{loc}}'(0)= 0$, let $l$ satisfy $l(\dot{\rho}_*)=1$, where $\dot{\rho}_* = \dot{\rho}_1$, and $\dot{\rho}_1$ is defined in \eqref{eqn:kernelAtShear}.
    Let $\mathcal{G}(\zeta, q)=\mathcal{F}(\zeta, q + l(\zeta-\zeta_*))$.
    Letting $\tilde{\mathcal{U}} = \{ (\zeta, q - l(\zeta-\zeta_*)) \mid (\zeta, q) \in \mathcal{U} \}$, we see $\tilde {\mathcal{U}}$ is open, and $\mathcal{G} \colon \tilde{\mathcal{U}} \to \mathcal{Y}$ is $\R$-analytic.

    Notice that
    \[D_\zeta\mathcal{G}(\zeta,q) \dot \zeta =D_\zeta\mathcal{F}(\zeta,q+l(\zeta-\zeta_*)) \dot \zeta +D_q \mathcal{F}(\zeta,q+l(\zeta-\zeta_*)) l( \dot \zeta).\]
    Furthermore, for any $q$, the derivative $D_q\mathcal{F}(\zeta_*,q)$ is the zero operator, and so $\mathcal{G}$ satisfies hypotheses~\ref{largeamphypothesis1}--\ref{largeamphypothesis3} with the same values of $\zeta_*$, $q_*$, and $\dot {\rho}_*$. 
    Defining $\tilde{Z}_{\mathrm{loc}}(\tau) =Z_{\mathrm{loc}}(\tau)$ and $\tilde{Q}_{\mathrm{loc}}(\tau) =Q_{\mathrm{loc}}(\tau) - l(Z_{\mathrm{loc}}(\tau)-\zeta_*)$, we see that $\mathcal{G}(\tilde Z_{\mathrm{loc}}(\tau),\tilde Q_{\mathrm{loc}}(\tau))$ is identically 0.
    We apply \cite[Proposition~8.3.4(b)]{Buffoni--Toland:Book} to $\mathcal{G}$. 
    We know from its definition that $\tilde Q_{\mathrm{loc}}'(0) \neq 0$, thus for all sufficiently small $\tau>0$, $(\tilde Z_{\mathrm{loc}}(\tau),\tilde Q_{\mathrm{loc}}(\tau))$ lies on a distinguished arc of $\mathcal{G}$.
    Call this arc $\mathcal{A}_0$.

    By the implicit function theorem, see for example \cite[Theorem~4.5.3]{Buffoni--Toland:Book}, on any distinguished arc, $\zeta$ can be written as a function of $q$.
    We can thus reparametrise such that we have a bijection $(\tilde Z, \tilde Q) \colon (0,1) \to  \mathcal{A}_0$, with $(\tilde Z(\tau), \tilde Q(\tau))=(\tilde Z_{\mathrm{loc}}(\tau), \tilde Q_{\mathrm{loc}}(\tau))$ for all $0<\tau<\eps$.
    By the definition of a distinguished arc, we see $\mathcal{G}(\tilde Z(\tau), \tilde Q(\tau)) =0$.
    As $\tau \nearrow 1$, if alternative~\ref{alternative:blowup} occurs, we are done.
    If not, we show that $\mathcal{A}_0$ connects to another distinguished arc $\mathcal{A}_1$, or that $\mathcal{A}_0 \cup \{(\zeta_*,q_*)\} $ forms a loop.

    If alternative~\ref{alternative:blowup} does not occur, then there exists a sequence of $\tau_n \nearrow 1$ such that $(\tilde Z(\tau_n), \tilde Q(\tau_n))$ remains in a compact subset of $\tilde{\mathcal{U}}$.
    Passing to a convergent subsequence,  $(\tilde Z(\tau_n), \tilde Q(\tau_n)) \to (\zeta_1,q_1) \in \tilde {\mathcal{U}}$.
    We know $\mathcal{G}(\zeta_1,q_1)=0$ by continuity, and so hypothesis~\ref{largeamphypothesis2} gives us that $D_\zeta \mathcal{G}(\zeta_1,q_1)$ is Fredholm.
    The Fredholm index is locally constant with respect to the operator norm, and $D_\zeta \mathcal{G}(\tilde Z(\tau_n), \tilde Q(\tau_n)) \to D_\zeta \mathcal{G}(\zeta_1,q_1)$ in this norm. Thus, $D_\zeta \mathcal{G}(\zeta_1,q_1)$ is Fredholm of index 0.

    We now proceed exactly as in the proof of \cite[Theorem~9.1.1]{Buffoni--Toland:Book}, and conclude we have functions $\tilde Z, \tilde Q$  satisfying the conclusion of the theorem for $\mathcal{G}$ and $\tilde{\mathcal{U}}$. An inverse of an earlier transformation gives functions $Z$, $Q$
    satisfying the conclusion of the theorem for $\mathcal{F}$ and $\mathcal{U}$.
\end{proof}

\section{Derivation of the numerical formulation}\label{sec:numericsAlg}

We can extend to larger domains by reflecting $\Omega_0$ over the line $y=0$, and reflecting $\Omega_1$ over the line $y=1$.
Let
\begin{align*}
    \eta_0(t)&=(X(t),-Y(t))\\
    \eta_1(t)&=(X(t),2-Y(t))
\end{align*}
and let $\tilde{\Omega}_j$ be the region bounded by $x=-\pi/k$, $x=\pi/k$, $\eta$ and $\eta_j$.
We extend the domain of $f_0$ to $\tilde{\Omega}_0$ by noting that since $v_0=0$ on $y=0$, if $y<0$ we can define $f_0(z)=\overline{f_0}(\bar z )$.
Similarly, we extend the domain of $f_1$ to $\tilde{\Omega}_1$ by noting that since $v_1=0$ on $y=1$, if $y>1  $ we can define $f_1(z)=\overline{f_1}(2i+\bar z )$.

Therefore, by definition, we have $f_0(\bar{z})=\overline{f_0(z)}$. We also have from the evenness of $\Psi$ with respect to $x$ that $f_0(\bar z)=\overline{f_0(- z)}$. Therefore, $f_0(-z)=f_0(z)$.
We now integrate
\[\frac{f_0(z)}{(e^{ikz}-e^{ikw})(e^{ikz}-e^{-ik w})}\] around $\partial \tilde{\Omega}_0$, and apply the residue theorem for poles on the boundary. We see that for $w$ in the interface,
\begin{align*}
    -\frac{\pi}{k}f_0(w)&=\int_{\partial \tilde{\Omega}_0} \frac{f_0(z)}{(e^{ikz}-e^{ikw})(e^{ikz}-e^{-ik w})} \ dz\\
    &=\int_{\eta_0} \frac{f_0(z)}{(e^{ikz}-e^{ikw})(e^{ikz}-e^{-ik w})} \ dz - \int_{\eta} \frac{f_0(z)}{(e^{ikz}-e^{ikw})(e^{ikz}-e^{-ik w})} \ dz,
\end{align*}
where we integrate anticlockwise around $\partial \tilde{\Omega}_0$ in the first line, and along both $\eta$ and $\eta_0$ from left to right in the second line. The contributions from the left and right boundaries do not appear as they cancel by periodicity of the integrand. Now using evenness of $f_0$, we see
\begin{align*}
    -\frac{\pi}{k}f_0(w)&=\int_{\eta} \frac{f_0(z)}{(e^{-ikz}-e^{ikw})(e^{-ikz}-e^{-ik w})} \ dz - \int_{\eta} \frac{f_0(z)}{(e^{ikz}-e^{ikw})(e^{ikz}-e^{-ik w})} \ dz\\
    &=\int_{\eta} f_0(z) \left(\frac{1}{(e^{-ikz}-e^{ikw})(e^{-ikz}-e^{-ik w})} - \frac{1}{(e^{ikz}-e^{ikw})(e^{ikz}-e^{-ik w})} \right) \ dz\\
    &=\int_{\eta} f_0(z) g(z,w) \ dz.
\end{align*}
 Some elementary manipulation gives us
\begin{align*}
    -\frac{\pi }{k} f_0(w)&=\int_{-\pi} ^{\pi} f_0(X(t)+iY(t)) g(X(t)+iY(t),w)(X'(t)+iY'(t)) \ ds\\
    &=\int_{0} ^{\pi} f_0(X(t)+iY(t)) g(X(t)+iY(t),w)(X'(t)+iY'(t)) \\
    &\qquad + \overline{f_0(X(t)+iY(t))} g(-X(t)+iY(t),w)\overline{(X'(t)+iY'(t))} \ ds\\
\end{align*}
which straightforwardly yields \eqref{eqn:lowerHolomorphic}.

Similarly integrating
\[\frac{f_1(z)}{(e^{ikz}-e^{ik(i+w)})(e^{ikz}-e^{ik(i-w)})}\]
around $\partial \tilde{\Omega}_1$ gives us
\begin{equation}\label{eqn:f1}
     \begin{aligned} f_1(w) &= -\frac{ k }{\pi}\int_0^{\pi} f_1(X(t)+iY(t)) g(i-X(t)-iY(t),w-i)(X'(t)+iY'(t))\\
    &\qquad \qquad + \overline{f_1(X(t)+iY(t))}g(i+X(t)-iY(t),w-i ) \overline{(X'(t)+iY'(t))} \ ds
\end{aligned}  
\end{equation}
By continuity of $\Psi_x$ and $\Psi_y$ at the interface,
\begin{equation}\label{eqn:f0f1continuity}
    f_1(X(t),Y(t))=f_0(X(t),Y(t))+(Y(t)-H).
\end{equation}
Therefore inserting \eqref{eqn:f0f1continuity} into \eqref{eqn:f1} to eliminate $f_1$ yields
\eqref{eqn:upperHolomorphic}.

\bibliographystyle{plain}
\bibliography{refs}

\begin{thebibliography}{10}

\bibitem{ADN:prequel}
S.~Agmon, A.~Douglis, and L.~Nirenberg.
\newblock Estimates near the boundary for solutions of elliptic partial
  differential equations satisfying general boundary conditions. {I}.
\newblock {\em Comm. Pure Appl. Math.}, 12:623--727, 1959.

\bibitem{ADN}
S.~Agmon, A.~Douglis, and L.~Nirenberg.
\newblock Estimates near the boundary for solutions of elliptic partial
  differential equations satisfying general boundary conditions. {II}.
\newblock {\em Comm. Pure Appl. Math.}, 17:35--92, 1964.

\bibitem{ambrose:vortexsheetsSmall}
Benjamin Akers, David~M. Ambrose, and J.~Douglas Wright.
\newblock Traveling waves from the arclength parameterization: vortex sheets
  with surface tension.
\newblock {\em Interfaces Free Bound.}, 15(3):359--380, 2013.

\bibitem{AMBROSE:vortexsheets}
David~M. Ambrose, Walter~A. Strauss, and J.~Douglas Wright.
\newblock Global bifurcation theory for periodic traveling interfacial
  gravity–capillary waves.
\newblock {\em Ann. Inst. H. Poincar\'e{} C Anal. Non Lin\'eaire},
  33(4):1081--1101, 2016.

\bibitem{amick:stokesWavesRigorous}
C.~J. Amick, L.~E. Fraenkel, and J.~F. Toland.
\newblock On the {S}tokes conjecture for the wave of extreme form.
\newblock {\em Acta Math.}, 148:193--214, 1982.

\bibitem{at:twofluidglobal}
C.~J. Amick and R.~E.~L. Turner.
\newblock A global theory of internal solitary waves in two-fluid systems.
\newblock {\em Trans. Amer. Math. Soc.}, 298(2):431--484, 1986.

\bibitem{Beyn:borderinglemma}
W.-J. Beyn.
\newblock The numerical computation of connecting orbits in dynamical systems.
\newblock {\em IMA J. Numer. Anal.}, 10(3):379--405, 1990.

\bibitem{hunter:burgers4}
Joseph Biello and John~K. Hunter.
\newblock Nonlinear {H}amiltonian waves with constant frequency and surface
  waves on vorticity discontinuities.
\newblock {\em Comm. Pure Appl. Math.}, 63(3):303--336, 2010.

\bibitem{Buffoni--Toland:Book}
Boris Buffoni and John Toland.
\newblock {\em Analytic theory of global bifurcation}.
\newblock Princeton Series in Applied Mathematics. Princeton University Press,
  Princeton, NJ, 2003.
\newblock An introduction.

\bibitem{Burbea:vortexPatches}
Jacob Burbea.
\newblock Motions of vortex patches.
\newblock {\em Lett. Math. Phys.}, 6(1):1--16, 1982.

\bibitem{Burckel:darbouxPicard}
Robert~B. Burckel.
\newblock {\em Classical analysis in the complex plane}.
\newblock Cornerstones. Birkh\"auser/Springer, New York, 2021.

\bibitem{ccg:reg}
Angel Castro, Diego C{\'o}rdoba, and Javier G{\'o}mez-Serrano.
\newblock Uniformly rotating analytic global patch solutions for active
  scalars.
\newblock {\em Ann. PDE}, 2(1):Art. 1, 34, 2016.

\bibitem{cg:nonconvex}
Gerard Castro-López and Javier Gómez-Serrano.
\newblock Existence of analytic non-convex v-states.
\newblock 2024.

\bibitem{ChenWalshWheeler:StratifiedSolitaryWaves}
Robin~Ming Chen, Samuel Walsh, and Miles~H. Wheeler.
\newblock Existence and qualitative theory for stratified solitary water waves.
\newblock {\em Ann. Inst. H. Poincar\'{e} C Anal. Non Lin\'{e}aire},
  35(2):517--576, 2018.

\bibitem{ChenWalshWheeler:WithoutPhaseSpace}
Robin~Ming Chen, Samuel Walsh, and Miles~H. Wheeler.
\newblock Center manifolds without a phase space for quasilinear problems in
  elasticity, biology, and hydrodynamics.
\newblock {\em Nonlinearity}, 35(4):1927--1985, 2022.

\bibitem{ConstantinStrauss:TopologicalGlobBif}
Adrian Constantin and Walter Strauss.
\newblock Exact steady periodic water waves with vorticity.
\newblock {\em Comm. Pure Appl. Math.}, 57(4):481--527, 2004.

\bibitem{csv:global}
Adrian Constantin, Walter Strauss, and Eugen V\u{a}rv\u{a}ruc\u{a}.
\newblock Global bifurcation of steady gravity water waves with critical
  layers.
\newblock {\em Acta Math.}, 217(2):195--262, 2016.

\bibitem{cr:simple}
Michael~G. Crandall and Paul~H. Rabinowitz.
\newblock Bifurcation from simple eigenvalues.
\newblock {\em J. Functional Analysis}, 8:321--340, 1971.

\bibitem{DancerE.N.1973BTfA}
E.~N. Dancer.
\newblock Bifurcation theory for analytic operators.
\newblock {\em Proc. London Math. Soc. (3)}, s3-26(2):359--384, 1973.

\bibitem{hhmv:doubly}
Francisco de~la Hoz, Taoufik Hmidi, Joan Mateu, and Joan Verdera.
\newblock Doubly connected {$V$}-states for the planar {E}uler equations.
\newblock {\em SIAM J. Math. Anal.}, 48(3):1892--1928, 2016.

\bibitem{dz:vstates}
Gary~S. Deem and Norman~J. Zabusky.
\newblock Vortex waves: Stationary ``${V}$ states,'' interactions, recurrence,
  and breaking.
\newblock {\em Phys. Rev. Lett.}, 40:859--862, Mar 1978.

\bibitem{dvb:fronts}
F.~Dias and J.-M. Vanden-Broeck.
\newblock On internal fronts.
\newblock {\em J. Fluid Mech.}, 479:145--154, 2003.

\bibitem{Dold:timeDepNumerics}
John Dold and Howell Peregrine.
\newblock {\em An Efficient Boundary Integral Method for Steep Unsteady Water
  Waves}, pages 671--679.
\newblock Oxford University Press, United Kingdom, 1986.

\bibitem{Dritschel_1985}
David~G. Dritschel.
\newblock The stability and energetics of corotating uniform vortices.
\newblock {\em Journal of Fluid Mechanics}, 157:95–134, 1985.

\bibitem{DubreilJacotin}
M.-L. Dubreil-Jacotin.
\newblock Sur la d{\'e}termination rigoureuse des ondes permanentes
  p{\'e}riodiques d'ampleur finie.
\newblock {\em J. Math. Pures Appl. (9)}, 13:217--291, 1934.

\bibitem{jmm:overhanging}
Juan Dávila, Manuel del Pino, Monica Musso, and Miles~H. Wheeler.
\newblock Overhanging solitary water waves.
\newblock {\em arXiv}, 2409.01182, 2024.

\bibitem{EhrnströmEscherVillari:multipleCritLayers}
Mats Ehrnstr\"{o}m, Joachim Escher, and Gabriele Villari.
\newblock Steady water waves with multiple critical layers: interior dynamics.
\newblock {\em J. Math. Fluid Mech.}, 14(3):407--419, 2012.

\bibitem{EhrnströmWahlen:multipleCritlayers}
Mats Ehrnstr\"{o}m, Joachim Escher, and Erik Wahl\'{e}n.
\newblock Steady water waves with multiple critical layers.
\newblock {\em SIAM J. Math. Anal.}, 43(3):1436--1456, 2011.

\bibitem{elgindi:vortFnExistsIfAnalytic}
Tarek~M. Elgindi, Yupei Huang, Ayman~R. Said, and Chunjing Xie.
\newblock A classification theorem for steady euler flows.
\newblock {\em arXiv}, 2408.14662, 2024.

\bibitem{enciso2024smoothnonradialstationaryeuler}
Alberto Enciso, Antonio~J. Fernández, and David Ruiz.
\newblock Smooth nonradial stationary {E}uler flows on the plane with compact
  support.
\newblock {\em arXiv}, 2406.04414, 2024.

\bibitem{enciso2024schiffertypeproblemannuliapplications}
Alberto Enciso, Antonio~J. Fernández, David Ruiz, and Pieralberto Sicbaldi.
\newblock A {S}chiffer-type problem for annuli with applications to stationary
  planar {E}uler flows.
\newblock {\em Duke Math. J.}, 174(6):1151--1208, 2025.

\bibitem{Evans:pde}
Lawrence~C. Evans.
\newblock {\em Partial differential equations}.
\newblock Graduate studies in mathematics ; v. 19. American Mathematical
  Society, Providence, R.I., 2nd ed. edition, 2010.

\bibitem{fraenkel:book}
L.~E. Fraenkel.
\newblock {\em An introduction to maximum principles and symmetry in elliptic
  problems}, volume 128 of {\em Cambridge Tracts in Mathematics}.
\newblock Cambridge University Press, Cambridge, 2000.

\bibitem{gh:globalpairs}
Claudia Garc\'ia and Susanna~V. Haziot.
\newblock Global bifurcation for corotating and counter-rotating vortex pairs.
\newblock {\em Comm. Math. Phys.}, 402(2):1167--1204, 2023.

\bibitem{GilbargTrudinger:book}
David Gilbarg and Neil~S. Trudinger.
\newblock {\em Elliptic partial differential equations of second order}.
\newblock Classics in Mathematics. Springer-Verlag, Berlin, 2001.
\newblock Reprint of the 1998 edition.

\bibitem{gpsy:symmetry}
Javier G\'omez-Serrano, Jaemin Park, Jia Shi, and Yao Yao.
\newblock Symmetry in stationary and uniformly rotating solutions of active
  scalar equations.
\newblock {\em Duke Math. J.}, 170(13):2957--3038, 2021.

\bibitem{goncalves:touching}
Francisco Gonçalves.
\newblock Gravity water waves over constant vorticity flows: From laminar flows
  to touching waves.
\newblock {\em arXiv}, 2505.00417, 2025.

\bibitem{GUAN:solitary}
Xin Guan.
\newblock Particle trajectories under interactions between solitary waves and a
  linear shear current.
\newblock {\em Theor. Appl. Mech. Lett.}, 10(2):125--131, 2020.

\bibitem{Guan:novelBoundary}
Xin Guan and Jean Vanden-Broeck.
\newblock A novel boundary-integral algorithm for nonlinear unsteady surface
  and interfacial waves, 2023.
\newblock Copyright - © 2023. This work is published under
  http://arxiv.org/licenses/nonexclusive-distrib/1.0/ (the “License”).
  Notwithstanding the ProQuest Terms and Conditions, you may use this content
  in accordance with the terms of the License; Last updated - 2023-12-22.

\bibitem{HamelNadirashvili:NoStagImpliesShear}
Fran\c{c}ois Hamel and Nikolai Nadirashvili.
\newblock Shear flows of an ideal fluid and elliptic equations in unbounded
  domains.
\newblock {\em Comm. Pure Appl. Math.}, 70(3):590--608, 2017.

\bibitem{HassainiaMasmoudiWheeler:catseyevortpatch}
Zineb Hassainia, Nader Masmoudi, and Miles~H. Wheeler.
\newblock Global bifurcation of rotating vortex patches.
\newblock {\em Comm. Pure Appl. Math.}, 73(9):1933--1980, 2020.

\bibitem{WaterWavesSurvey}
Susanna~V. Haziot, Vera~Mikyoung Hur, Walter~A. Strauss, J.~F. Toland, Erik
  Wahl\'{e}n, Samuel Walsh, and Miles~H. Wheeler.
\newblock Traveling water waves---the ebb and flow of two centuries.
\newblock {\em Quart. Appl. Math.}, 80(2):317--401, 2022.

\bibitem{haziotWheeler}
Susanna~V. Haziot and Miles~H. Wheeler.
\newblock Large-amplitude steady solitary water waves with constant vorticity.
\newblock {\em Arch. Ration. Mech. Anal.}, 247(2):Paper No. 27, 49, 2023.

\bibitem{hmidi:trivial}
Taoufik Hmidi.
\newblock On the trivial solutions for the rotating patch model.
\newblock {\em J. Evol. Equ.}, 15(4):801--816, 2015.

\bibitem{hm:ellipse}
Taoufik Hmidi and Joan Mateu.
\newblock Bifurcation of rotating patches from {K}irchhoff vortices.
\newblock {\em Discrete Contin. Dyn. Syst.}, 36(10):5401--5422, 2016.

\bibitem{HmidiMateuVerdera:vortexPatches}
Taoufik Hmidi, Joan Mateu, and Joan Verdera.
\newblock Boundary regularity of rotating vortex patches.
\newblock {\em Arch. Ration. Mech. Anal.}, 209(1):171--208, 2013.

\bibitem{hmvsz:Burgers}
John~K. Hunter, Ryan~C. Moreno-Vasquez, Jingyang Shu, and Qingtian Zhang.
\newblock On the approximation of vorticity fronts by the {B}urgers-{H}ilbert
  equation.
\newblock {\em Asymptot. Anal.}, 129(2):141--177, 2022.

\bibitem{HurWheeler:exacltyCrapper}
Vera~Mikyoung Hur and Miles~H. Wheeler.
\newblock Exact free surfaces in constant vorticity flows.
\newblock {\em J. Fluid Mech.}, 896:R1, 10, 2020.

\bibitem{HurWheeler:NearCrapper}
Vera~Mikyoung Hur and Miles~H. Wheeler.
\newblock Overhanging and touching waves in constant vorticity flows.
\newblock {\em J. Differential Equations}, 338:572--590, 2022.

\bibitem{kn:existence}
G.~Keady and J.~Norbury.
\newblock On the existence theory for irrotational water waves.
\newblock {\em Math. Proc. Cambridge Philos. Soc.}, 83(1):137--157, 1978.

\bibitem{spruck:interfaceisanalytic}
D.~Kinderlehrer, L.~Nirenberg, and J.~Spruck.
\newblock Regularity in elliptic free boundary problems.
\newblock {\em J. Analyse Math.}, 34:86--119, 1978.

\bibitem{kirchhoff1897}
{G}ustav {K}irchhoff.
\newblock Vorlesungen {\"u}ber mathematische {P}hysik.
\newblock {\em Monatsh. Math. Phys.}, 8(1):A29--A29, 1897.

\bibitem{MatiocAncaVoichita:criticalLayer}
Anca-Voichita Matioc.
\newblock Steady internal water waves with a critical layer bounded by the wave
  surface.
\newblock {\em J. Nonlinear Math. Phys.}, 19(1):1250008, 21, 2012.

\bibitem{ourpaper}
Karsten Matthies, Jonathan Sewell, and Miles~H. Wheeler.
\newblock Solitary solutions to the steady euler equations with piecewise
  constant vorticity in a channel.
\newblock {\em J. Differential Equations}, 400:376--422, 2024.

\bibitem{meiron1983overhanging}
D.~I. Meiron and P.~G. Saffman.
\newblock Overhanging interfacial gravity waves of large amplitude.
\newblock {\em J. Fluid Mech.}, 129:213--218, 1983.

\bibitem{overman:limiting}
Edward~A. Overman, II.
\newblock Steady-state solutions of the {E}uler equations in two dimensions.
  {II}. {L}ocal analysis of limiting {$V$}-states.
\newblock {\em SIAM J. Appl. Math.}, 46(5):765--800, 1986.

\bibitem{Pierrehumbert_1980}
R.~T. Pierrehumbert.
\newblock A family of steady, translating vortex pairs with distributed
  vorticity.
\newblock {\em Journal of Fluid Mechanics}, 99(1):129–144, 1980.

\bibitem{plotnikov:conjecture}
P.~I. Plotnikov.
\newblock Proof of the {S}tokes conjecture in the theory of surface waves.
\newblock {\em Stud. Appl. Math.}, 108(2):217--244, 2002.
\newblock Translated from Dinamika Sploshn. Sredy No. 57 (1982), 41--76,
  MR0752600 (85f:76036)].

\bibitem{pullinReview}
D.~I. Pullin.
\newblock Contour dynamics methods.
\newblock In {\em Annual review of fluid mechanics, {V}ol.\ 24}, pages 89--115.
  Annual Reviews, Palo Alto, CA, 1992.

\bibitem{Rayleigh:instability}
Lord Rayleigh.
\newblock On the {S}tability, or {I}nstability, of certain {F}luid {M}otions.
\newblock {\em Proc. Lond. Math. Soc.}, 11:57--70, 1879/80.

\bibitem{riberoMilewskiNachbin:1layerVort}
Roberto Ribeiro, Jr., Paul~A. Milewski, and Andr\'e Nachbin.
\newblock Flow structure beneath rotational water waves with stagnation points.
\newblock {\em J. Fluid Mech.}, 812:792--814, 2017.

\bibitem{saffman:shapes}
P.~G. Saffman and R.~Szeto.
\newblock Equilibrium shapes of a pair of equal uniform vortices.
\newblock {\em Phys. Fluids}, 23(12):2339--2342, 1980.

\bibitem{Serrin:edgepoint}
James Serrin.
\newblock A symmetry problem in potential theory.
\newblock {\em Arch. Rational Mech. Anal.}, 43:304--318, 1971.

\bibitem{SimmenSaffman:1layervort}
J.~A. Simmen and P.~G. Saffman.
\newblock Steady deep-water waves on a linear shear current.
\newblock {\em Stud. Appl. Math.}, 73(1):35--57, 1985.

\bibitem{stokes1880considerations}
George~Gabriel Stokes.
\newblock Considerations relative to the greatest height of oscillatory
  irrotational waves which can be propagated without change of form.
\newblock {\em Mathematical and physical papers}, 1:225--228, 1880.

\bibitem{peregrine:1layervorticity}
A.~F. Teles~da Silva and D.~H. Peregrine.
\newblock Steep, steady surface waves on water of finite depth with constant
  vorticity.
\newblock {\em J. Fluid Mech.}, 195:281--302, 1988.

\bibitem{turner:rapidly}
R.~E.~L. Turner.
\newblock Internal waves in fluids with rapidly varying density.
\newblock {\em Ann. Scuola Norm. Sup. Pisa Cl. Sci. (4)}, 8(4):513--573, 1981.

\bibitem{jeanmarc:solitaryConstVort}
J.-M. Vanden-Broeck.
\newblock Steep solitary waves in water of finite depth with constant
  vorticity.
\newblock {\em J. Fluid Mech.}, 274:339--348, 1994.

\bibitem{jeanmarc:book}
Jean-Marc Vanden-Broeck.
\newblock {\em Gravity-capillary free-surface flows}.
\newblock Cambridge Monographs on Mechanics. Cambridge University Press,
  Cambridge, 2010.

\bibitem{jeanmarc:NumCalcGravCap}
Jean‐Marc Vanden‐Broeck.
\newblock Numerical calculation of gravity‐capillary interfacial waves of
  finite amplitude.
\newblock {\em The Physics of Fluids}, 23(9):1723--1726, 09 1980.

\bibitem{Varholm:globalMultiCritLayer}
Kristoffer Varholm.
\newblock Global bifurcation of waves with multiple critical layers.
\newblock {\em SIAM J. Math. Anal.}, 52(5):5066--5089, 2020.

\bibitem{Volpert:Book}
Vitaly Volpert.
\newblock {\em Elliptic partial differential equations. {V}olume 1: {F}redholm
  theory of elliptic problems in unbounded domains}, volume 101 of {\em
  Monographs in Mathematics}.
\newblock Birkh\"auser/Springer Basel AG, Basel, 2011.

\bibitem{Wahlen:critLayer}
Erik Wahl\'{e}n.
\newblock Steady water waves with a critical layer.
\newblock {\em J. Differential Equations}, 246(6):2468--2483, 2009.

\bibitem{WahlénErik2024Lsgw}
Erik Wahlén and Jörg Weber.
\newblock Large-amplitude steady gravity water waves with general vorticity and
  critical layers.
\newblock {\em Duke Math. J.}, 173(11):2197--, 2024.

\bibitem{wang2024:boundaryregularityuniformlyrotating}
Yuchen Wang, Guanghui Zhang, and Maolin Zhou.
\newblock Boundary regularity of uniformly rotating vortex patches and an
  unstable elliptic free boundary problem.
\newblock {\em arXiv}, 2306.03498, 2024.

\bibitem{wrl:elliptic}
J.~T. Wloka, B.~Rowley, and B.~Lawruk.
\newblock {\em Boundary value problems for elliptic systems}.
\newblock Cambridge University Press, Cambridge, 1995.

\bibitem{woz:numerical}
H.~M. Wu, E.~A. {Overman II}, and N.~J. Zabusky.
\newblock Steady-state solutions of the {E}uler equations in two dimensions:
  rotating and translating {$V$}-states with limiting cases. {I}. {N}umerical
  algorithms and results.
\newblock {\em J. Comput. Phys.}, 53(1):42--71, 1984.

\bibitem{zabuskyHugesRoberts}
Norman~J. Zabusky, M.~H. Hughes, and K.~V. Roberts.
\newblock Contour dynamics for the {E}uler equations in two dimensions.
\newblock {\em J. Comput. Phys.}, 30(1):96--106, 1979.

\end{thebibliography}

\end{document}